\definecolor{darkgreen}{rgb}{0.0,0,0.9}
\DeclareMathAlphabet{\mathpzc}{OT1}{pzc}{m}{it}
\newtheorem{propo}{Proposition}[section]
\newtheorem{lemma}[propo]{Lemma}
\newtheorem{definition}[propo]{Definition}
\newtheorem{thm}[propo]{Theorem}
\theoremstyle{definition}
\newtheorem{remark}[propo]{Remark}
\newtheorem{example}[propo]{Example}
\def\btR{{\boldsymbol {\tilde{R}}}}
\def\tV{\widetilde{V}}
\def\tR{\widetilde{R}}
\def\bone{{\boldsymbol 1}}
\def\Unif{{\sf Unif}}
\def\cF{{\cal F}}
\def\cA{{\cal A}}
\def\cR{{\cal R}}
\def\cH{{\cal H}}
\def\cE{{\cal E}}
\def\tM{\widetilde{M}}
\def\event{\mathcal{E}}
\def\bopt{\beta^{\rm opt}}
\def\AssI{{\sf G1}\,}
\def\AssII{{\sf G2}\,}
\def\AssIII{{\sf G3}\,}
\def\AssIV{{\sf G4}\,}
\def\naturals{{\mathbb N}}
\def\integers{{\mathbb Z}}
\def\reals{{\mathbb R}}
\def\eps{{\varepsilon}}
\def\prob{{\mathbb P}}
\def\E{{\mathbb E}}
\def\Var{{\rm Var}}
\def\bgo{\bgamma^{\rm opt}}
\def\L0{{L_0}}
\def\de{{\rm d}}
\def\<{\langle}
\def\>{\rangle}
\def\bZ{{\boldsymbol Z}}
\def\bX{{\boldsymbol X}}
\def\bY{{\boldsymbol Y}}
\def\bR{{\boldsymbol R}}
\def\bx{{\boldsymbol x}}
\def\by{{\boldsymbol y}}
\def\bz{{\boldsymbol z}}
\def\bp{{\boldsymbol p}}
\def\br{{\boldsymbol r}}
\def\tbp{\widetilde{\boldsymbol p}}
\def\tbp{\widetilde{\boldsymbol p}}
\def\btheta{{\boldsymbol \theta}}
\def\bTheta{{\boldsymbol \Theta}}
\def\balpha{{\boldsymbol \alpha}}
\def\bgamma{{\boldsymbol \gamma}}
\def\bxi{{\boldsymbol \xi}}
\def\tbR{\widetilde{\boldsymbol R}}
\def\tR{\widetilde{R}}
\def\talpha{\widetilde{\alpha}}
\def\F{{\sf F}}
\def\ind{{\mathbb I}}
\def\F{{\sf F}}
\def\normal{{\sf N}}
\def\Exp{{\sf Exp}}
\def\sT{{\sf T}}
\def\id{{\rm I}}
\def\LORD{\textsc{Lord\,}}
\def\bopt{\beta^{\rm opt}}
\def\event{\mathcal{E}}
\def\v*{v_0}
\def\T*{T_0}
\def\u*{u_0}
\def\F*{F_0}
\definecolor{olivegreen}{rgb}{0,0.6,0.4}
\def\tW{\widetilde{W}}
\def\FWER{{\rm FWER}}
\def\mFDR{{\rm mFDR}}
\def\sFDR{{\rm sFDR}}
\def\FDR{{\rm FDR}}
\def\FDX{{\rm FDX}}
\def\FDP{{\rm FDP}}
\def\cH{{\mathcal{H}}}
\def\cI{{\mathcal{I}}}
\def\BH{{\rm BH}}
\def\dg{\dot{g}}
\def\cR{{\mathcal R}}
\def\cB{{\mathcal B}}
\def\hnu{\widehat{\nu}}
\def\RL{R^{\rm L}}
\def\cI{\mathcal{I}}
\def\cImin{\cI_{\min}}
\def\cImax{\cI_{\max}}
\newcommand{\ajcomment}[1]{}
\newcommand{\labitem}[2]{%
\def\@itemlabel{\text{#1}}
\item
\def\@currentlabel{#1}\label{#2}}
\title{Online Rules for Control of False Discovery Rate and False Discovery Exceedance\footnote{The authors would like to thank the Co-Editor, Associate Editor, and referees
for their valuable comments that helped us improve the paper significantly. A. Javanmard was partially supported by a CSoI fellowship during the course of this work
(NSF Grant CCF-0939370). A. Montanari was
supported in part by NSF grants CCF-1319979 and DMS-1106627, and the AFOSR grant FA9550-13-1-0036}}
\author{Adel Javanmard\footnote{Department of Data Sciences and Operations, University of Southern California. Email: \url{ajavanma@usc.edu} }
             \;\; and \;\; Andrea~Montanari\footnote{Department of Electrical Engineering and Department of Statistics, Stanford University. Email: \url{montanar@stanford.edu}}
            }
\begin{document}

\maketitle

\begin{abstract}
Multiple hypothesis testing is a core problem in statistical inference and arises in almost every scientific field.  
Given a set of null hypotheses $\cH(n) = (H_1,\dotsc, H_n)$, 
Benjamini and Hochberg~\cite{benjamini1995controlling} introduced the false discovery rate (\FDR), which is the expected proportion
of false positives among rejected null hypotheses, and
proposed a testing procedure that controls $\FDR$
below a pre-assigned significance level. Nowadays $\FDR$ is the criterion of choice for large-scale multiple
hypothesis testing.

In this paper we consider the problem of controlling $\FDR$ in an \emph{online manner}.
 Concretely, we consider an ordered --possibly infinite-- sequence of null hypotheses $\cH =
 (H_1,H_2,H_3,\dots )$ where, at  each step $i$, the
 statistician must  decide whether to reject  hypothesis 
$H_i$ having access only to the previous decisions. 
This model was introduced by Foster and Stine~\cite{alpha-investing}.

We study a class of \emph{generalized alpha investing} procedures, first introduced by Aharoni and Rosset \cite{generalized-alpha}. We prove
that any rule in this class controls online $\FDR$, provided $p$-values corresponding to true nulls are independent from the other $p$-values. 
Earlier work only established $\mFDR$ control.
Next, we obtain conditions under which generalized alpha investing controls $\FDR$ in
the presence of general $p$-values dependencies. We also develop a modified set of procedures
that allow to control the false discovery exceedance (the tail of the proportion of false discoveries).
Finally, we evaluate the performance of online procedures on both synthetic and real data, comparing
them with offline approaches, such as adaptive Benjamini-Hochberg.
\end{abstract}

%
%

%
%
\section{Introduction}

The common practice in claiming a scientific discovery is to support such claim with a $p$-value
as a measure of statistical significance. Hypotheses with $p$-values below a significance level $\alpha$, typically 
$0.05$, are considered to be \emph{statistically significant}. While this ritual controls type I errors for single testing problems, in case of testing multiple
hypotheses it  leads to a large number of false positives (false discoveries).
Consider, for instance, a setting in which $N$ hypotheses are to be tested, but only a few of them, say $s$, are non-null.
If we test all of the hypotheses at a fixed significance level $\alpha$, each of $N-s$ truly null hypotheses can be falsely rejected with probability $\alpha$.  
Therefore, the number of false discoveries  --equal to $\alpha(N-s)$ in expectation-- can substantially exceed the number $s$ of true non-nulls.

The false discovery rate (FDR) --namely, the expected fraction of discoveries that are false positives-- is the 
criterion of choice for statistical inference in large scale hypothesis testing problem. In their groundbreaking work \cite{benjamini1995controlling}, 
Benjamini and Hochberg (BH) developed a procedure to control FDR below a pre-assigned level, while allowing for a large number of true discoveries when 
many non-nulls are present.
The  BH procedure  remains --with some improvements-- the state-of-the-art in the context of multiple hypothesis testing,
and has been implemented across genomics \cite{reiner2003identifying}, brain imaging \cite{genovese2002thresholding}, marketing \cite{optimizely}, and many other applied domains.

Standard FDR control techniques, such as the BH procedure~\cite{benjamini1995controlling},
require aggregating $p$-values for all the tests and processing them jointly. This is impossible in a number of applications which are
best modeled as an online hypothesis testing problem \cite{alpha-investing} (a more formal definition
will be provided below):

\smallskip

\emph{Hypotheses arrive sequentially in a stream. At each step, the analyst must decide whether to reject the current
null hypothesis without having access to the number of hypotheses (potentially infinite) or the future $p$-values, but solely based on the previous decisions.}
\smallskip

This is the case --for instance-- with publicly available datasets, where new hypotheses are tested in an on-going fashion
by different researchers \cite{generalized-alpha}. Similar constraints arise in marketing research, where multiple A-B tests are carried
out on an ongoing fashion \cite{optimizely}. Finally, scientific research as a whole suffers from the same problem: a stream of hypotheses are tested on an ongoing
basis using a fixed significance level, thus leading to large numbers of false positives \cite{ioannidis2005most}.
We refer to Section \ref{sec:related work} for further discussion.

In order to illustrate the online scenario, consider an approach that would  control the family-wise error rate (FWER), i.e. the probability of rejecting at least one true
null hypothesis. Formally
\begin{equation}
\FWER(n) \equiv \sup_{\btheta \in \bTheta}\,
\prob_\theta\Big(V^\theta(n) \ge 1 \Big)\,.
\end{equation}
where $\btheta$ denotes the model parameters (including the set of non-null hypotheses)
and $V^{\theta}(n)$ the number of false positives among the first $n$ hypotheses.
This metric can be controlled by choosing different significance levels $\alpha_i$ for tests 
$H_i$, with $\balpha=(\alpha_i)_{i\ge 1}$ summable, e.g., $\alpha_i = \alpha 2^{-i}$. 
Notice that the analyst only needs to know the number of tests performed before the current one, in order to implement this scheme.
However, this method leads to small statistical power.  In particular, making a discovery at later steps becomes very unlikely.

In contrast, the BH procedure assumes that all the $p$-values are given a priori. 
Given $p$-values $p_1, p_2, \dotsc, p_N$ and a significance level $\alpha$, BH follows 
the steps below:
\begin{enumerate}
\item Let $p_{(i)}$ be the $i$-th $p$-value in the (increasing) sorted order, and define $p_{(0)} = 0$.  Further. let
\begin{align}\label{eq:iBH}
i_{\BH} \equiv \max\Big\{0\le i\le N:\, p_{(i)}\le \alpha i/N \Big\}\,.
\end{align}
\item Reject $H_j$ for every test with $p_j \le p_{(i_{\BH})}$.
\end{enumerate}
As mentioned above, BH controls the false discovery rate defined as
\begin{eqnarray}
\FDR(N) \equiv \sup_{\btheta\in \bTheta} \E_\theta\left(\frac{V^{\theta}(N)}{R(N)\vee 1}\right)\,,
\end{eqnarray}
where $R(N)$ is the total the number of rejected hypotheses.
Note that BH requires the knowledge of \emph{all} $p$-values to
determine the significance level for testing the hypotheses. Hence, it
does not address the online scenario.

In this paper, we study  methods for \emph{online} control of false discovery rate. 
Namely, we consider a sequence of hypotheses $H_1, H_2, H_3, \dotsc$
that arrive sequentially 
in a stream, with corresponding $p$-values $p_1$, $p_2$, $\dots$. 
We aim at developing a testing mechanism that ensures false discovery
rate remains below 
a pre-assigned level $\alpha$. 
A testing procedure provides a sequence of significance levels $\alpha_i$, with decision rule:
\begin{eqnarray}\label{eq:Ti}
R_i = \begin{cases}
1& \text{if }p_i\le\alpha_i\quad\quad\;\text{(reject $H_i$),}\\
0& \text{otherwise}\quad \quad \text{(accept $H_i$).}
\end{cases}
\end{eqnarray}
In \emph{online} testing, we require significance levels to be functions of prior outcomes:
\begin{eqnarray}
\alpha_i = \alpha_i(R_1, R_2, \dotsc, R_{i-1})\,.
\end{eqnarray} 

Foster and Stine~\cite{alpha-investing} introduced the above setting and proposed a class of 
procedures named \emph{alpha investing rules.} Alpha investing starts with an initial wealth,
at most $\alpha$, of allowable false discovery  rate.  The wealth is spent for
testing different hypotheses.  Each time a discovery occurs, the
alpha investing procedure earns  a contribution toward its wealth to
use for further tests. Foster and Stine~\cite{alpha-investing}  proved that alpha investing rules control a 
modified metric known as mFDR, defined as below:
\begin{align}
\mFDR_{\eta}(n)\equiv \sup_{\btheta\in \bTheta}  \frac{\E(V^{\theta}(n))}{\E(R(n))+\eta}\,.
\end{align}
In words, $\mFDR$ is the ratio of the expected number of false discoveries 
to the expected number of discoveries. As illustrated in the Appendix~\ref{sec:mfdr_fdr}, mFDR and FDR can be very different in
situations with high variability. While FDR is the expected proportion of false discoveries, mFDR is the 
ratio of  two expectations and hence is not directly related to any single sequence quantity.

Several recent papers \cite{lockhart2014significance,g2015sequential,li2016accumulation} consider a `sequential hypothesis testing' problem
that arises in connection with sparse linear regression. Let us emphasize that the problem treated in  \cite{lockhart2014significance,g2015sequential} 
is substantially different from the one analyzed here. For  instance, as discussed in Section \ref{sec:related work},
the methods of \cite{g2015sequential} achieve vanishingly small statistical power for the present problem.

\subsection{Contributions}

In this paper, we study a  class of procedures that are known as
\emph{generalized alpha investing}, and were first introduced by
Aharoni and Rosset in \cite{generalized-alpha}. As in alpha investing~\cite{alpha-investing}, generalized alpha investing
makes use of a potential sequence (wealth) that increases every time a null hypothesis is rejected, and decreases otherwise. However: 
$(i)$~The pay-off and pay-out functions are general functions of
past history; $(ii)$ The pay-out is not tightly determined by the
testing level $\alpha_i$. 
This additional freedom allows to construct interesting new rules.

The contributions of this paper are summarized as follows.
%

{\bf Online control of FDR.} We prove that generalized
  alpha investing rules control  FDR,
under the assumption of independent $p$-values, and provided they are \emph{monotone} (a technical condition defined in the sequel). To the best of our
knowledge, this is the first work\footnote{Special cases were
  presented in our earlier technical report \cite{javanmard2015online}.} that guarantees online control of
FDR. 

{\bf Online control of FDR for dependent $p$-values.}
Dependencies among  $p$-values can arise for multiple reasons. For
instance the same data can be re-used to test a new hypothesis, or the
choice of a new hypothesis can depend on the past outcomes. We present
a general upper bound on the FDR for dependent $p$-values under
generalized alpha investing.

{\bf False discovery exceedance.} FDR can be viewed as the
  expectation of false discovery
  proportion (FDP). In some cases, FDP may not be well represented
  by its expectation, e.g., when the number of discoveries is small. In
  these cases, FDP might be sizably larger than its expectation
  with significant probability. In
  order to provide tighter control, we develop bounds on the false discovery
  exceedance (FDX), i.e. on the tail probability of FDP.

{\bf Statistical power.} In order to compare different  procedures, we develop lower bounds on fraction of non-null
  hypotheses that are discovered (statistical power),  under a mixture model where each null hypothesis is  false with
 probability $\pi_1$, for a fixed arbitrary $\pi_1$.

We focus in particular on a concrete example of generalized alpha investing
rule  (called \LORD below) that   we consider particularly compelling. We use our lower bound
to guide the choice of parameters for this rule.

 {\bf Numerical Validation.} We validate our procedures on synthetic
   and real data in Section~\ref{sec:simulation-syn} and Appendix~\ref{sec:AdClick}, showing that they control FDR
   and mFDR in an online setting. We further compare them with BH and
   Bonferroni procedures. 
   We observe that generalized alpha investing procedures can benefit from ordering of hypotheses. Specifically, they can achieve higher statistical power compared to offline benchmarks such as adaptive BH, when fraction of non-nulls is small and hypotheses can be a priori ordered in such a way that those most \emph{likely} to be rejected appear first in the sequence.

%

\subsection{Further related work}
\label{sec:related work}

\noindent\emph{General context.} An increasing effort was devoted to reducing the risk of fallacious research findings.
Some of the prevalent issues such as publication bias, lack of
replicability and multiple comparisons on a dataset were
discussed in Ioannidis's 2005
papers~\cite{ioannidis2005most,ioannidis2005contradicted} 
and in~\cite{prinz2011believe}. 

\smallskip

\noindent\emph{Statistical databases.} Concerned with the above issues and the
importance of data sharing in the 
genetics community, ~\cite{rosset2014novel} proposed an approach to
public database 
management, called Quality Preserving Database (QPD). 
A QPD  makes a shared data resource amenable to perpetual use for hypothesis 
testing while controlling FWER and maintaining statistical power of
the tests.  In this scheme, for testing 
a new hypothesis, the investigator should pay a price in form of
additional samples that  should be added to the database. The number
of required samples for each test 
depends on the required effect size and the power for the corresponding test. A key feature of QPD is that 
type I errors are controlled at the management layer and the investigator is not concerned with $p$-values for the tests. Instead, investigators provide effect size, assumptions on the distribution of the data, and the desired statistical power. A critical limitation of QPD is that all samples, including those currently in the database and those that will be added, are assumed to have the same quality and are coming from a common underlying distribution.  Motivated by similar concerns in practical data analysis, ~\cite{dwork2014preserving}  applies insights from differential privacy to efficiently use samples to answer adaptively chosen estimation queries.   
These papers however do not address the problem of controlling FDR in online multiple testing.

\smallskip

\noindent\emph{Online feature selection.} Building upon
alpha investing procedures,~\cite{lin2011vif} develops 
VIF, a method for feature selection in large regression problems. VIF is accurate and computationally very efficient; it uses a one-pass search over the pool of features and applies alpha investing to test each feature for adding to the model. VIF regression avoids overfitting due to the property that alpha investing controls $\mFDR$. Similarly, one can incorporate $\LORD$ in VIF regression to perform fast online feature selection and provably avoid overfitting.

\smallskip

\noindent\emph{High-dimensional and sparse regression.} There has been 
significant interest over the last two years in developing hypothesis
testing procedures for high-dimensional regression, especially in
conjunction with sparsity-seeking methods. Procedures for computing
$p$-values of low-dimensional coordinates were developed in 
\cite{zhang2014confidence,van2014asymptotically,javanmard2014confidence,javanmard2013hypothesis,javanmard2013nearly}.
Sequential and selective inference  methods were proposed in 
\cite{lockhart2014significance,fithian2014optimal,taylor2014exact}.
Methods to control FDR were put forward in \cite{barber2015controlling,bogdan2015slope}.

As exemplified by VIF regression, online hypothesis testing methods can be useful
in this context as they allow to select a subset of regressors through
a one-pass procedure. Also they can be used in conjunction with the
methods of \cite{lockhart2014significance}, where a sequence of
hypothesis is generated by including an increasing number of
regressors (e.g. sweeping values of the regularization parameter).

In particular, \cite{g2015sequential,li2016accumulation} develop multiple hypothesis testing procedures for ordered tests.
Note, however, that these approaches fall short of addressing the issues 
we consider, for several reasons:
$(i)$  They are not online, since they reject the first $\hat{k}$ null
hypotheses, where $\hat{k}$ depends on all the $p$-values.
 $(ii)$ They require knowledge of all
past $p$-values (not only discovery events) to compute the current
score. $(iii)$ Since they are constrained to reject all hypotheses before
$\hat{k}$, and accept
them after, they cannot achieve any discovery rate
increasing with $n$, let alone nearly linear in $n$. 
For instance in the mixture model of Section \ref{sec:power},
if the fraction of true non-null is $\pi_1<\alpha$, then the methods of \cite{g2015sequential,li2016accumulation} 
achieves $O(1)$ discoveries out of $\Theta(n)$ true non-null. In other
words their power is of order $1/n$ in this simple case. 

\subsection{Notations}

Throughout the paper, we typically use upper case symbols (e.g. $X,Y,Z,\dots$) to denote random variables, and lower case 
symbols for deterministic values (e.g. $x,y,z,\dots$). Vectors are denoted by boldface, e.g. $\bX,\bY, \bZ,\dots$ for
random vectors, and $\bx,\by, \bz,\dots$ for deterministic vectors. Given a vector $\bX=(X_1,X_2,\dots,X_n)$, 
we use $\bX_{i}^j = (X_i,X_{i+1},\dots,X_j)$ to denote the sub-vector with indices between $i$ and $j$.
We will often consider sequences indexed by the same `time index' as for the hypotheses $\{H_1,H_2,H_3,\dots\}$.
Given such a sequence $(X_i)_{i\in\naturals}$, we denote by $X(n) \equiv\sum_{i=1}^nX_i$ its partial sums.

We denote the  standard Gaussian density by $\phi(x) = e^{-x^2/2}/\sqrt{2\pi}$, and the Gaussian distribution function
by $\Phi(x) = \int_{-\infty}^x \phi(t)\, \de t$.
We use the standard big-O notation. In particular $f(n) = O(g(n))$ as $n\to\infty$ if there exists a constant $C>0$ 
such that $|f(n)|\le C\, g(n)$ for all $n$ large enough.
We also use $\sim$ to denote asymptotic equality, i.e. $f(n)\sim g(n)$ as $n\to \infty$, means $\lim_{n\to\infty}f(n)/g(n)=1$.
We further use $\asymp$ for equality up to constants, i.e. if $f(n) = \Theta(g(n))$, then there exist constants $C_1,C_2>0$ such that $C_1|g(n)|\le f(n) \le 
C_2 |g(n)|$ for all $n$ large enough.

%
%
 \section{Generalized alpha investing}

In this section we define generalized alpha investing rules, and provide some concrete examples. 
Our definitions and notations follow the paper of Aharoni and Rosset that first introduced generalized alpha investing \cite{generalized-alpha}.

\subsection{Definitions}
\label{sec:DefGen}

Given a sequence of input $p$-values
$(p_1,p_2,\dots)$, a \emph{generalized alpha investing} rule generates a sequence of decisions $(R_1,R_2,\dots)$ (here $R_{j}\in\{0,1\}$
and $R_j=1$ is to be interpreted as rejection of null hypothesis $H_{j}$)
by using test levels $(\alpha_1,\alpha_2,\alpha_3,\dots)$. After each decision $j$, the rule updates a potential function $W(j)$ as follows:
\begin{itemize}
\item If hypothesis $j$ is accepted, then the potential function is decreased by a pay-out $\varphi_j$.
\item If hypothesis $j$ is rejected, then the potential is increased by an amount $\psi_j-\varphi_j$. 
\end{itemize}
In other words, the pay-out $\varphi_j$ is the amount paid for testing a new hypothesis, and the pay-off $\psi_j$ 
is the amount earned if a discovery is made at that step.

Formally, a generalized alpha investing rule is specified by three (sequences of) functions $\alpha_j,\varphi_j,\psi_j:\{0,1\}^{j-1}\to\reals_{\ge 0}$,
determining test levels, pay-out and pay-off. Decisions are taken by testing at level $\alpha_j$ 
\begin{eqnarray}\label{eq:TiRep}
R_j = \begin{cases}
1,& \text{if }p_j\le\alpha_j=\alpha_j(R_1,\dots,R_{j-1}),\\
0,& \text{otherwise.}
\end{cases}
\end{eqnarray}
The potential function is updated via:
\begin{align}
W(0) &= w_0 \, ,\\
W(j) & = 
W(j-1)-\varphi_j(\bR_1^{j-1})+R_j\, \psi_j(\bR_1^{j-1})\,,
\label{eq:Wupdate}
\end{align}
with $w_0\ge 0$ an initial condition.
Notice in particular that $W(j)$ is a function of $(R_1,\dots,R_j)$.

A valid generalized alpha investing rule is required to satisfy the following conditions, for a constant $b_0>0$:
\begin{enumerate}
\item[{\sf G1.}] For all $j\in\naturals$ and all $\bR_{1}^{j-1}\in\{0,1\}^{j-1}$, 
letting $\psi_j = \psi_j(\bR_1^{j-1})$, $\varphi_j = \varphi_j(\bR_1^{j-1})$, $\alpha_j = \alpha_j(\bR_1^{j-1})$,
we have
\begin{align}
\psi_j&\le \varphi_j+b_0\, ,\label{eq:A1a}\\
\psi_j&\le \frac{\varphi_j}{\alpha_j}+b_0-1\,, \label{eq:A1b}  \\
\varphi_j &\le W(j-1)\,. \label{eq:Nonneg}
\end{align}
\item[{\sf G2.}]  For all $j\in\naturals$, and all $\bR_1^{j-1}\in\{0,1\}^{j-1}$, if  $W(j-1)=0$ 
then $\alpha_j=0$. 
\end{enumerate}
Notice that Condition~\eqref{eq:Nonneg} and ${\sf G2}$ are well posed since $W(j-1)$, $\varphi_j$ and $\alpha_j$
are functions of $\bR_1^{j-1}$. Further, because of~\eqref{eq:Nonneg},
the  function $W(j)$ remains non-negative for all $j\in \naturals$.  

We later show that generalized alpha investing guarantees $\FDR$ control as a function of $b_0$ and $w_0$.

Throughout, we  shall denote by $\cF_j$ the $\sigma$-algebra generated by 
the random variables $\{R_1,\dots,R_{j}\}$. 

\begin{definition}\label{def:mon}
For $\bx$, $\by\in\{0,1\}^n$, we write $\bx\preceq\by$ if $x_j\le y_j$ for all $j\in\{1,\dots,n\}$.
We say that an online rule is \emph{monotone} if  the functions $\alpha_j$ are monotone non-decreasing
with respect to this partial ordering (i.e. if $\bx\preceq \by$ implies $\alpha_j(\bx)\le \alpha_j(\by)$).
\end{definition}

\begin{remark}
Our notation differs from \cite{generalized-alpha} in one point, namely we use  $w_0$ for the initial potential (which is denoted by $\alpha\eta$ 
in  \cite{generalized-alpha}) and $b_0$ for the constant appearing in Equations.~(\ref{eq:A1a}), (\ref{eq:A1b})  (which is denoted by $\alpha$ 
in  \cite{generalized-alpha}). We prefer to reserve $\alpha$ for the FDR level\footnote{The use of $\eta$ in \cite{generalized-alpha} was related
to control of $\mFDR_{\eta}$ in that paper.}.
\end{remark}

\begin{remark}
In a generalized alpha investing rule, as we reject more hypotheses the potential $W(j)$ increases and hence we can use large
test levels $\alpha_j$. In other words, the burden of proof decreases as we reject more hypotheses. This is similar to the BH rule, where the most significant $p$-values is compared to a Bonferroni cutoff, the second most significant to twice this cutoff and so on. 
\end{remark}

\subsection{Examples}

Generalized $\alpha$-investing rules comprise a large variety of online hypothesis testing methods. We next describe some specific subclasses that 
are useful for designing specific procedures.

\subsubsection{Alpha Investing}\label{subsec:AI}
Alpha investing, introduced by Foster and Stine \cite{alpha-investing}, is a special case
of generalized alpha investing rule. In this case the potential  is decreased by $\alpha_j/(1-\alpha_j)$ if hypothesis $H_j$ is
not rejected, and increased by a fixed amount $b_0$ if it is rejected. In formula, the potential evolves according to
\begin{align}
W(j) = W(j-1)-(1-R_j)\frac{\alpha_j}{1-\alpha_j} +R_jb_0\, .
\end{align}
This fits the above framework by defining $\varphi_j = \alpha_j/(1-\alpha_j)$ and $\psi_j = b_0+\alpha_j/(1-\alpha_j)$.
Note that this rule depends on the choice of the test levels $\alpha_j$, and of the parameter $b_0$. 
The test levels $\alpha_j$ can be chosen arbitrarily, provided that they satisfy condition (\ref{eq:Nonneg}), which is equivalent to $\alpha_j/(1-\alpha_j)\le W(j-1)$.

\subsubsection{Alpha Spending with Rewards}\label{sec:reward}
Alpha spending with rewards was introduced in \cite{generalized-alpha}, as a special sub-class of
generalized alpha investing rules, which are convenient for some specific applications.

In this case, test levels are chosen to be proportional to the
pay-out function, $\alpha_j = \varphi_j/\kappa$,  with a proportionality coefficient $\kappa$. Conditions 
(\ref{eq:A1a}) and (\ref{eq:A1b}) coincide with\footnote{Note that \cite{generalized-alpha} rescales the potential function by $\kappa$, and hence the condition on $\psi_j$
is also rescaled.}
\begin{align}
0\le \psi_j\le \min\Big(\kappa\alpha_j+b_0,\kappa-1+b_0\Big)\,. 
\end{align}
The choice of penalties $\varphi_j$ is arbitrary as long as constraint~\eqref{eq:Nonneg} is satisfied. For instance, \cite{generalized-alpha}
uses $\varphi_j = c_1W(j-1)$ with $c_1\in (0,1)$. 

\subsubsection{{LORD}}

As a running example, we shall use a simple procedure that we term \LORD, for  Levels based On Recent Discovery.
$\LORD$ is easily seen to be a special case of alpha spending with rewards, for $\kappa=1$.

Below, we present three different versions of $\LORD$. For a concrete exposition, choose any sequence of non-negative numbers 
$\bgamma = (\gamma_i)_{i\in\naturals}$, which is monotone non-increasing
(i.e. for $i\le j$ we have $\gamma_i\ge \gamma_j$) and such that
$\sum_{i=1}^\infty \gamma_i = 1$. We refer to Section \ref{sec:power} for concrete choices of this sequence.

At each time $i$, we let $T_{i}$ be the set of discovery times up to time $i$. We further define
$\tau_i$ as the last time a discovery was made
before $i$:
\[
T(i)= \Big\{\ell \in\{1,\dots,i-1\}\;  :\; R_{\ell}=1\Big\}\,,\quad
\quad \tau_i \equiv \max \{\ell: \ell \in T(i)\}\,.\]
At each step, if a discovery is made, we add an amount
$b_0$ to the current wealth. Otherwise, we remove an amount of the current test level from the wealth. 
Formally, we set
\begin{align}\label{eq:LORD-general}
W(0)  = w_0\,,\quad  \psi_i = b_0\, ,\quad \varphi_i = \alpha_i\,, 
%
\end{align}
where  $\{W(j)\}_{j\ge 0}$ is defined
recursively via Equation~(\ref{eq:Wupdate}).
Note that $\tau_i$ and $T(i)$ are measurable on $\cF_{i-1}$, and hence
$\varphi_i,\psi_i$ are functions of $\bR_1^{i-1}$ as claimed, while
$W(i)$ is a function of $\bR_1^i$.
 Therefore, the above rule defines an online multiple hypothesis testing procedure.

We present three versions of $\LORD$ which differ in the way that the test levels $\alpha_i$ are set.

\begin{itemize}
\item $\LORD 1$: We set the test levels solely based on the time of the last discovery. Specifically, 
\begin{align}\label{alpha:LORD1}
\alpha_ i = \begin{cases}
\gamma_i w_0 & \text{ if } i \le t_1\,,\\
\gamma_{i-\tau_i} b_0 & \text{ if } i > t_1\,,
\end{cases}
\end{align}
where $t_1$ denotes the time of first discovery.
In words, up until the first discovery is made, we set levels by discounting the initial wealth, i.e.,  $\gamma_i w_0$. After the first discovery is made, we use a fraction $\gamma_{i-\tau_i}$ of $b_0$ to spend in testing null hypothesis $H_i$.

\item $\LORD 2$: We set the test levels based on the previous discovery times. Specifically, 
\begin{align}\label{alpha:LORD2}
\alpha_ i = \gamma_i w_0 + \Big(\sum_{\ell \in T(i)} \gamma_{i-\ell}\Big) b_0\,.
\end{align}

\item $\LORD 3$: In this alternative, the significance levels $\alpha_i$ depend on
the past only through the time of the last discovery, and the wealth accumulated at that
time. Specifically, 
\begin{align}
\alpha_i = \alpha_i = \gamma_{i-\tau_i}\, W(\tau_i)\, ,\label{eq:PhiLORD}
\end{align}
\end{itemize} 

In the next lemma, we show that all the three versions of $\LORD$ are generalized alpha investing rules. Further, $\LORD 1$ and $\LORD 2$ are monotone rules (see Definition~\ref{def:mon}), while $\LORD 3$ is not necessarily a monotone rule without making further assumptions on sequence $\bgamma$. 
\begin{lemma}\label{LORD-mon}
The rules $\LORD 1$, $\LORD 2$ and $\LORD 3$ are instances of generalized alpha investing rules. Further, the rules $\LORD1$ and $\LORD 2$ are monotone.
  \end{lemma}
Lemma~\ref{LORD-mon} is proved in Appendix~\ref{proof:LORD-mon}.

\section{Control of false discovery rate}

\subsection{$\FDR$ control for independent test statistics}

As already mentioned, we are interested in testing a --possibly
infinite-- sequence of null hypotheses $\cH = (H_i)_{i\in \naturals}$.
The set of first $n$ hypotheses will be denoted by $\cH(n) = (H_i)_{1\le i\le n}$.
Without loss of generality, we assume $H_i$ concerns the value of a
parameter $\theta_i$, with  $H_i= \{\theta_i = 0\}$. Rejecting the null
hypothesis $H_i$ can be interpreted as  $\theta_i$ being significantly
non-zero. 
We will denote by $\Theta$ the set of possible values for the parameters
$\theta_i$, and by $\bTheta = \Theta^{\naturals}$ the space of
possible values of the sequence $\btheta = (\theta_i)_{i\in\naturals}$

Under the null hypothesis $H_i : \,\theta_i =0$, the corresponding $p$-value is uniformly random in $[0,1]$:
\begin{align}
p_i\sim \Unif([0,1])\,.
\end{align}
Recall that $R_i$ is the indicator that a discovery is made at time
$i$, and $R(n) =\sum_{i=1}^nR_i$ the total number of discoveries up to
time $n$.  
Analogously,  let $V^\btheta_i$ be the  indicator that a false
discovery occurs at time  $i$ and $V^\theta(n)
=\sum_{i=1}^nV^{\theta}_i$ the total number of false discovery up to
time $n$. 
Throughout the paper, superscript $\theta$ is used to distinguish unobservable variables such as $V^\theta(n)$, from statistics such as  
$R(n)$. However, we drop the superscript when it is clear from the context.

There are various criteria of interest for multiple testing methods. We will mostly focus on the \emph{false discovery rate (FDR)}~\cite{benjamini1995controlling},
and we repeat its definition here for the reader's convenience. 
We first define the \emph{false discovery proportion} (\FDP) as follows. For $n\ge 1$,
\begin{eqnarray}
\FDP^\theta(n) \equiv \frac{V^{\theta}(n)}{R(n)\vee 1}\,. \label{eq:FDPdef}
\end{eqnarray}
The false discovery rate is defined as
\begin{eqnarray}
\FDR(n) \equiv \sup_{\btheta\in \bTheta} \E_\theta\Big(\FDP^{\theta}(n)\Big)\,.
\end{eqnarray}

Our first result establishes FDR control for all monotone generalized
alpha investing procedures, where the monotonicity of a testing rule is given by Definition~\ref{def:mon}. Its proof is presented in Appendix~\ref{sec:ProofFDRControl}.
\begin{thm}\label{thm:FDR-control}
Assume the $p$-values $(p_i)_{i\in \naturals}$ to be
independent. Then, for any monotone generalized alpha investing rule with
$w_0+b_0\le \alpha$, we have
\begin{align}
\sup_n\,\FDR(n) \le \alpha\, . \label{eq:Control-1}
\end{align}
The same holds if only the $p$-values corresponding to true nulls are mutually independent, and independent 
from the non-null $p$-values.
\end{thm}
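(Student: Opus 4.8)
The natural strategy is to control $\FDR(n) = \sup_{\btheta}\E_\theta[V^\theta(n)/(R(n)\vee 1)]$ by relating the false-discovery proportion to the potential function $W$. The key structural idea is that the potential $W(n)$, properly shifted, should dominate a quantity that overestimates $V^\theta(n)$ whenever $R(n)\ge 1$. Concretely, I would try to show that the process
\begin{align}
M(n) \;\equiv\; b_0\, V^\theta(n) \;-\; \alpha\,(R(n)\vee 1) \;+\; W(n) \;+\; (\alpha - w_0)
\end{align}
(or a close variant) is a supermartingale with respect to the filtration $\{\cF_n\}$, with $\E[M(0)]\le 0$ under the hypothesis $w_0+b_0\le\alpha$. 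Taking expectations would then give $b_0\,\E[V^\theta(n)] \le \alpha\,\E[R(n)\vee 1]$, which is an $\mFDR$-type bound, not an $\FDR$ bound. So the martingale needs to be set up more cleverly: the real target is to bound $\E[V^\theta(n)/(R(n)\vee 1)]$ directly, and the standard trick (going back to the original BH analysis and its martingale reformulation by Storey--Taylor--Siegmund) is to write the ratio as a sum $\sum_i V_i^\theta / (R(n)\vee 1)$ and handle each false-rejection term individually, exploiting that on the event $\{p_i\le\alpha_i\}$ the denominator $R(n)\vee 1 \ge 1$ is already accounted for, and that $p_i$ restricted to true nulls is uniform and independent of $\cF_{i-1}$ (this is where monotonicity enters, to ensure $\{\alpha_i \ge t\}\in\cF_{i-1}$ interacts correctly with conditioning on $p_i$).

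The cleaner route, which I expect the authors take, is: first prove the bound $\FDR(n)\le (w_0+b_0)/\alpha \cdot \text{(something)}$ or more precisely establish $\E_\theta[V^\theta(n)/(R(n)\vee 1)]\le (w_0+b_0)/\alpha$ by a telescoping/supermartingale argument on a ratio-adjusted potential. The heart of it: define $J_i = b_0 + W(i)/\alpha$ or similar, and show that $\E[\,(\text{false-rejection at }i)\cdot \mathbb{I}\{R(n)\ge 1\}/(R(n)\vee 1)\,\mid\,\cF_{i-1}]$ is bounded using $\alpha_i$ and the constraint \eqref{eq:A1b}, namely $\psi_i \le \varphi_i/\alpha_i + b_0 - 1$. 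The monotonicity assumption is used to replace the "future" random denominator $R(n)$ by a quantity that can be compared monotonically after freezing $p_i$ — specifically, the standard "leave-one-out" coupling: setting $p_i$ to be very small (forcing $R_i=1$) only increases subsequent test levels $\alpha_j$, hence only increases $R(n)$, so $1/(R(n)\vee 1)$ along the trajectory with $R_i=1$ is stochastically no larger. This lets one bound $V_i^\theta/(R(n)\vee 1)$ conditionally by $\alpha_i \cdot \E[1/\tilde R(n) \mid \cF_{i-1}]$ where $\tilde R$ is the coupled process, and then the $\varphi,\psi$ accounting (the wealth ledger) telescopes the sum of $\alpha_i/\tilde R$ terms against $w_0+b_0$.

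\textbf{The last statement specifically.} For the final sentence — that independence only of the true-null $p$-values among themselves and from the non-null $p$-values suffices — the point is that the entire argument above never actually uses anything about the distribution of non-null $p$-values, nor about dependence between a true-null $p$-value $p_i$ and the non-null ones. What is used is: (a) each true-null $p_i$ is $\Unif[0,1]$; (b) when we condition on $\cF_{i-1}$ and on all non-null $p$-values, the remaining true-null $p$-values are still mutually independent and uniform, so in particular $p_i \perp \cF_{i-1}$ after this extra conditioning. Since $R_j$ for $j<i$ and the test level $\alpha_i$ are determined by $(R_1,\dots,R_{i-1})$, which are measurable functions of all the $p$-values $p_1,\dots,p_{i-1}$ (null and non-null), one conditions on the $\sigma$-algebra $\cG_{i-1}$ generated by $\cF_{i-1}$ together with the full non-null $p$-value sequence; under the stated independence assumption, $p_i$ (a true null) is still uniform given $\cG_{i-1}$, and the monotone-coupling argument goes through verbatim with $\cG_{i-1}$ in place of $\cF_{i-1}$. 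I would present this as a one-paragraph remark after the main proof: "Inspecting the proof, the only probabilistic input is $\prob(p_i \le t \mid \cF_{i-1}) \le t$ for true-null indices $i$; this holds under the weaker assumption because conditioning additionally on the non-null $p$-values leaves each true-null $p$-value uniform and independent of the past."

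\textbf{Main obstacle.} The crux — and the step I expect to be most delicate — is the monotone coupling that replaces the genuinely forward-looking denominator $R(n)\vee 1$ by something that can be controlled at time $i$. One must verify carefully that flipping $R_i$ from $0$ to $1$ (i) is a legitimate operation under the coupling (it corresponds to decreasing $p_i$, which has the right effect because the rule is monotone and $\alpha_j$ is non-decreasing in past rejections), (ii) yields $\tilde R(n)\ge R(n)$ pointwise along the coupled trajectory, and (iii) that the wealth bookkeeping — using \eqref{eq:A1a}, \eqref{eq:A1b}, \eqref{eq:Nonneg} and $\AssII$ — then telescopes $\sum_i \alpha_i \mathbb{I}\{\text{true null}\}/\tilde R(n)$ against the initial budget $w_0$ plus the $b_0$ earned per rejection, so that the final ratio is at most $(w_0+b_0)/\alpha \le 1$. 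Getting the coupling and the ledger to line up — in particular making sure the "$+b_0$" per discovery in the potential exactly matches the "$+1$" per discovery in the denominator $R(n)$ — is the technical heart.
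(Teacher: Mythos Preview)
Your proposal is essentially correct and follows the same route as the paper: the monotone leave-one-out coupling you describe in (i)--(ii) is exactly the paper's key lemma, which shows $\E[\ind\{p_j\le\alpha_j\}\,f(R(n))\mid\cF_{j-1}]\le\alpha_j\,\E[f(R(n))\mid\cF_{j-1}]$ for any non-increasing $f$ and any true-null index $j$; and the wealth-ledger accounting is the paper's main argument. Your treatment of the relaxed independence assumption is also correct (and in fact slightly more than needed: since a true-null $p_j$ is assumed independent of all other $p$-values, the lemma applies verbatim without the auxiliary conditioning on $\cG_{i-1}$).

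One refinement: rather than bounding $\sum_i\alpha_i\ind\{\text{true null}\}/\tilde R(n)$ and then telescoping, the paper packages everything into the single process
\[
A(j)\;=\;\bigl\{\,b_0 R(j)-V(j)-W(j)\,\bigr\}\cdot f(R(n)),\qquad f(R)=\frac{1}{R\vee 1},
\]
and shows, using the coupling lemma together with conditions~\eqref{eq:A1a}--\eqref{eq:A1b}, that $\E[A(j)-A(j-1)\mid\cF_{j-1}]\ge 0$ for every $j$, regardless of whether $H_j$ is true or false. This gives $\E[A(n)]\ge\E[A(0)]=-w_0\,\E[f(R(n))]$, and since $W(n)\ge 0$ one reads off
\[
\E\!\left[\frac{V(n)}{R(n)\vee 1}\right]\;\le\; b_0\,\E\!\left[\frac{R(n)}{R(n)\vee 1}\right]+w_0\,\E\!\left[\frac{1}{R(n)\vee 1}\right]\;\le\; b_0+w_0\;\le\;\alpha
\]
directly. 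In particular there is no stray factor of $1/\alpha$: your final bookkeeping ``the ratio is at most $(w_0+b_0)/\alpha\le 1$'' is off by that factor, which would give only $\FDR\le 1$ rather than $\FDR\le\alpha$. The fix is exactly the process $A(j)$ above.
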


\begin{remark}
By applying Theorem~\ref{thm:FDR-control} and Lemma~\ref{LORD-mon}, we obtain that $\LORD 1$ and $\LORD$ controls $\FDR$ at level $\alpha$, as long as
$w_0 + b_0 \le \alpha$. For $\LORD 3$, such result cannot be obtained directly from Theorem~\ref{thm:FDR-control} because this rule is not necessarily a monotone rule
without making further assumptions on the sequence $\bgamma$. Nevertheless, in our numerical experiments, we focus on $\LORD 3$ and as we show empirically that
it also control $\FDR$. \footnote{Henceforth, whenever we mention $\LORD$ rule, we are referring to $\LORD 3$.}
\end{remark}

\begin{remark}\label{remark:BetterFDR}
In Appendix~\ref{sec:ProofFDRControl}, we prove a somewhat stronger version of Theorem \ref{thm:FDR-control},
namely $\FDR(n) \le b_0 \E\{R(n)/(R(n)\vee 1)\} +w_0\E\{1/(R(n)\vee 1)\}$. In particular, $\FDR(n) \lesssim b_0$ when 
the total number of discoveries $R(n)$ is large, with high probability. This is the case --for instance-- when the hypotheses 
to be tested comprise
a large number of `strong signals' (even if these form a small proportion of the total number of hypotheses).
\end{remark}
Another possible strengthening of Theorem \ref{thm:FDR-control} is
obtained by considering a new metric, that we call  $\sFDR_{\eta}(n)$ (for smoothed FDR):\footnote{Some authors \cite{barber2016knockoff} refer to this quantity 
as ``modified FDR". We will not follow this terminology since its acronym (mFDR) gets confused with ``marginal FDR"
\cite{alpha-investing,generalized-alpha}.} 
\begin{align}\label{eq:sFDR-aj}
\sFDR_{\eta}(n) \equiv \sup_{\btheta\in\Theta}\E\Big\{\frac{ V^{\theta}(n)}{R(n)+\eta}\Big\}\, .
\end{align}
The following theorem  bounds $\sFDR_{w_0/b_0}(n)$ for monotone generalized alpha investing rules (cf. Definition~\ref{def:mon}).
\begin{thm}\label{thm:FDR-control-2}
Under the assumptions of Theorem \ref{thm:FDR-control}, for any $w_0,b_0>0$, we have
\begin{align}
\sup_{n}\, \sFDR_{w_0/b_0}(n)\le b_0\, .\label{eq:Control-2}
\end{align}
\end{thm}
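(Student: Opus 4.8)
The plan is to reduce Theorem~\ref{thm:FDR-control-2} to the same machinery that proves Theorem~\ref{thm:FDR-control}, exploiting the fact that $\sFDR_{w_0/b_0}$ differs from $\FDR$ only in the denominator. Concretely, I would start from the quantity $\E\{V^\theta(n)/(R(n)+w_0/b_0)\}$ and try to bound it by constructing a supermartingale (or a suitable non-increasing-in-expectation sequence) of the form $M_j = (V^\theta(j) - c)/(R(j)+\eta)$ for appropriate constants, or more likely by following the argument in Appendix~\ref{sec:ProofFDRControl} almost verbatim. The key structural input is condition \AssI, specifically \eqref{eq:A1b}, which says $\psi_j \le \varphi_j/\alpha_j + b_0 - 1$, equivalently $\alpha_j(\psi_j + 1 - b_0) \le \varphi_j$. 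Since under a true null $p_j\sim\Unif[0,1]$ we have $\E[R_j\mid \cF_{j-1}] = \alpha_j$ (using monotonicity and independence of the null $p$-values to handle the conditioning, exactly as in Theorem~\ref{thm:FDR-control}), the expected change in the potential $W(j) = W(j-1) - \varphi_j + R_j\psi_j$ on a true-null step satisfies $\E[W(j) - W(j-1)\mid\cF_{j-1}] = -\varphi_j + \alpha_j\psi_j \le \alpha_j(\psi_j + 1 - b_0) - \varphi_j - \alpha_j + \alpha_j b_0 \le -\alpha_j + \alpha_j b_0 = -\E[R_j\mid\cF_{j-1}] + b_0\E[R_j\mid\cF_{j-1}]$. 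Wait — more carefully, on a true null $\E[V_j^\theta\mid\cF_{j-1}] = \alpha_j$, so this reads $\E[W(j) - W(j-1)\mid\cF_{j-1}] \le -\E[V_j^\theta\mid\cF_{j-1}] + b_0\,\E[R_j\mid\cF_{j-1}]$, and on a non-null step $V_j^\theta = 0$ while $\E[W(j)-W(j-1)\mid\cF_{j-1}] \ge -\varphi_j + 0 \ge -b_0 + \psi_j\cdot 0$... hmm, the non-null case needs the bound $\psi_j \le \varphi_j + b_0$ from \eqref{eq:A1a}: if rejected, $W$ increases by $\psi_j - \varphi_j \le b_0$; if accepted it decreases. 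So in all cases $W(j) - W(j-1) \le -V_j^\theta + b_0 R_j$ holds not just in expectation but... no, only in conditional expectation on null steps. The clean statement I expect is that $W(n) + V^\theta(n) - b_0 R(n)$ is a supermartingale, hence $\E[W(n)] + \E[V^\theta(n)] - b_0\E[R(n)] \le W(0) = w_0$, giving $\E[V^\theta(n)] \le w_0 + b_0\,\E[R(n)] - \E[W(n)] \le w_0 + b_0\,\E[R(n)]$ since $W(n)\ge 0$.

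Next I would turn this into a bound on $\sFDR_{w_0/b_0}$. The inequality $\E[V^\theta(n)] \le w_0 + b_0\,\E[R(n)] = b_0\,\E[R(n) + w_0/b_0]$ is an inequality between expectations, but $\sFDR$ is an expectation of a ratio, so this alone is not enough — I need the ratio version. The right move is to not take expectations at the end but instead to run the supermartingale argument against the \emph{stopped/weighted} process. Following the standard trick in FDR proofs (and presumably what Appendix~\ref{sec:ProofFDRControl} does), I would consider, for each fixed realization, that $R(n)$ only changes at rejection times, and build the sequence $N_j \equiv (V^\theta(j) - b_0 R(j) + w_0)/(R(j) + w_0/b_0)$ — actually cleaner: define the ``potential per discovery'' and show that $V^\theta(j)/(R(j)+w_0/b_0) - (\text{something involving }W(j))$ has non-positive drift. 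The cleanest route: show $\E\{V^\theta(n)/(R(n)+w_0/b_0)\} \le b_0\cdot\E\{R(n)/(R(n)+w_0/b_0)\} + \E\{(w_0 - W(n))/(R(n)+w_0/b_0)\}$ pointwise-in-expectation via the supermartingale, then bound $R(n)/(R(n)+w_0/b_0) \le 1$ and $(w_0 - W(n))/(R(n)+w_0/b_0) \le w_0/(0 + w_0/b_0) = b_0$ — but that gives $2b_0$, off by a factor. So the factor-of-two loss must be avoided by a more careful bookkeeping, presumably by incorporating $W(n)/b_0$ into the denominator's role: note $R(n) + W(n)/b_0 \ge$ something. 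Actually the slick identity is likely: $b_0 R(n) + w_0 - W(n) = b_0 R(n) + w_0 - w_0 + \varphi\text{-sums} - \psi R(n)$-ish, and one shows directly that $V^\theta(n)$ compares to $b_0(R(n) + w_0/b_0)$ up to the non-negative quantity $W(n)/b_0$ sitting in the right place. The main obstacle, then, is exactly this sharp constant — getting $b_0$ rather than $2b_0$ — and I expect it requires the observation that $W(n) \ge 0$ can be "added to" $b_0 R(n)$ so that one is really bounding $V^\theta(n) \le b_0 R(n) + w_0 - W(n) \le b_0 R(n) + w_0$, then dividing by $R(n) + w_0/b_0 = (b_0 R(n) + w_0)/b_0$ to get exactly $b_0$; the subtlety is that this division-by-the-right-thing must be done inside the expectation, which is legitimate precisely because $b_0 R(j) + w_0$ is $\cF_j$-measurable and the supermartingale property lets one pass to the optional-stopping / Lemma-from-appendix form.

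Here is the plan in order: (1) Recall from the proof of Theorem~\ref{thm:FDR-control} the key conditional-expectation fact $\E[V_j^\theta \mid \cF_{j-1}] \le \alpha_j$ on true-null steps, using monotonicity of the rule and independence of the null $p$-values — this is the only place probabilistic independence enters. (2) Use conditions \eqref{eq:A1a}--\eqref{eq:A1b} to show that $X_j \equiv W(j) + V^\theta(j) - b_0 R(j)$ satisfies $\E[X_j \mid \cF_{j-1}] \le X_{j-1}$, i.e. $(X_j)$ is a supermartingale with $X_0 = w_0$. (3) Derive the pointwise-in-expectation bound $\E[V^\theta(n)] \le b_0\E[R(n)] + w_0 - \E[W(n)]$, and more importantly carry the $W(n)\ge0$ slack forward to write $V^\theta(n) + W(n) \le b_0 R(n) + w_0$ after appropriate manipulation, so that dividing by $b_0 R(n) + w_0 = b_0(R(n) + w_0/b_0)$ gives the bound with the correct constant — this is where I'd invoke whatever lemma Appendix~\ref{sec:ProofFDRControl} establishes for turning the supermartingale into a ratio bound. (4) Take the supremum over $\btheta$ and over $n$. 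The main obstacle I anticipate is step (3): ensuring the constant is exactly $b_0$ and not $2b_0$, which hinges on correctly placing $W(n)$ in the numerator/denominator bookkeeping, and on the fact that $w_0/b_0$ is precisely the right offset $\eta$ to make $R(n) + \eta = (b_0 R(n) + w_0)/b_0$ so that the nonnegative initial wealth $w_0$ and the nonnegative terminal potential $W(n)$ slot in cleanly. (This is also why $w_0 + b_0 \le \alpha$ is needed in Theorem~\ref{thm:FDR-control} but \emph{not} here: we are bounding by $b_0$ directly rather than by $\alpha$, so no constraint relating $w_0,b_0,\alpha$ is required.)
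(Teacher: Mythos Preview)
Your overall direction is right, and you correctly identify the algebraic identity that makes the constant exactly $b_0$: with $\eta = w_0/b_0$ one has $b_0 R(n)/(R(n)+\eta) + w_0/(R(n)+\eta) = b_0$ identically. But there is a genuine gap in step~(3), and a related mischaracterization in step~(1).

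The unweighted supermartingale $X_j = W(j) + V^\theta(j) - b_0 R(j)$ of step~(2) yields only $\E[V^\theta(n)] \le b_0\,\E[R(n)] + w_0$, which is $\mFDR$ control. You cannot upgrade this to $\sFDR$ control by the manipulations you sketch: the pointwise inequality ``$V^\theta(n) + W(n) \le b_0 R(n) + w_0$'' is false in general (take one true null with $w_0=b_0=0.1$, $\varphi_1=\alpha_1=\psi_1=0.05$; on rejection the left side is $1.1$ and the right side $0.2$), and dividing an inequality that holds only in expectation by the random quantity $R(n)+w_0/b_0$ is precisely the illegitimate step. There is no post-processing lemma that ``turns the supermartingale into a ratio bound.''

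What the paper does instead is bake the denominator into the process from the outset: it sets $f(r) = 1/(r+w_0/b_0)$ and works with $A(j) = \{b_0 R(j) - V(j) - W(j)\}\, f(R(n))$, showing $\E[A(j)-A(j-1)\mid\cF_{j-1}]\ge 0$. The difficulty is that $f(R(n))$ depends on the \emph{future} $p$-values $p_j,\dots,p_n$, so on a true-null step one needs
\[
\E\big[\ind\{p_j\le\alpha_j\}\, f(R(n))\,\big|\,\cF_{j-1}\big]\;\le\;\alpha_j\,\E\big[f(R(n))\,\big|\,\cF_{j-1}\big],
\]
which is \emph{not} the trivial fact $\E[V_j^\theta\mid\cF_{j-1}]\le\alpha_j$ you invoke in step~(1). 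This weighted inequality is Lemma~\ref{lem:main1}, proved by coupling: replace $p_j$ by $0$, observe that on $\{p_j\le\alpha_j\}$ the two trajectories coincide so $R(n)=\tR(n)$, then use independence of $p_j$ to factor and monotonicity of the rule to get $\tR(n)\ge R(n)$, hence $f(\tR(n))\le f(R(n))$ since $f$ is non-increasing. This coupling is where monotonicity of the rule is actually used; the fact you state in step~(1) does not require it. Once the lemma is in place, summing gives $\E\{V(n)f(R(n))\}\le b_0\,\E\{R(n)f(R(n))\}+w_0\,\E\{f(R(n))\}$, and your algebraic identity finishes the job.
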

Note that Equation~(\ref{eq:Control-2}) implies (\ref{eq:Control-1}) by using $R(n)+(w_0/b_0)\le (b_0+w_0)R(n)/b_0$ for $R(n)\ge 1$. 
Also, $\E\big\{V^{\theta}(n)/(R(n)+(w_0/b_0))\big\}\approx \FDR(n)$ if $R(n)$ is large with high probability. 

Let us  emphasize that the guarantee in Theorem \ref{thm:FDR-control-2} is  different from the one in 
\cite{alpha-investing,generalized-alpha}, which instead use $\mFDR_{\eta}(n)\equiv\E\{V^{\theta}(n)\}/(\E\{R(n)\}+\eta)$.
 As mentioned earlier, $\mFDR$ does not correspond to a single-sequence property.

\begin{remark}\label{remark:LowerBound}
In Appendix~\ref{sec:FDR-lowerbound} we show that Theorems \ref{thm:FDR-control} and \ref{thm:FDR-control-2} 
cannot be substantially improved, unless specific restrictions are imposed on the generalized alpha investing rule. 
In particular,
we prove that there exist generalized alpha investing rules for which $\lim\inf_{n\to\infty}\FDR(n)\ge b_0$, 
and $\lim_{n\to\infty}\sFDR_{w_0/b_0} =  b_0$.
\end{remark}

\subsection{$\FDR$ control for dependent test statistics}\label{sec:depFDP}

In some applications, the assumption of independent $p$-values is not
warranted. This is the case --for instance-- of multiple related  hypotheses being
tested on the same experimental data.
Benjamini and Yekutieli \cite{benjamini2001control} introduced a
property called \emph{positive regression dependency from a subset}
$I_0$ (PRDS on $I_0$) to capture a positive dependency structure
among the test statistics. They showed that if the joint distribution
of the test statistics is PRDS on the subset of test statistics corresponding to 
true null hypotheses, then BH controls $\FDR$. (See Theorem 1.3
in~\cite{benjamini2001control}.) 
Further, they proved that BH controls $\FDR$ under general dependency
if its threshold is adjusted by replacing 
$\alpha$ with $\alpha/(\sum_{i=1}^N \frac{1}{i})$ in equation~\eqref{eq:iBH}.

Our next result establishes an upper bound on the $\FDR$ of generalized alpha investing
rules, under general $p$-values dependencies.
For a given generalized alpha investing rule, 
let $\cR_i\equiv\{\br_{1}^i\in\{0,1\}^i:\, \prob(\bR_1^i=\br_1^i)>0\}$, the set of decision sequences that have non-zero probability.
\begin{definition}
An \emph{index sequence} is a sequence of deterministic functions $\cI = (\cI_i)_{i\in \naturals}$ with 
$\cI_i: \{0,1\}^{i} \to \reals_{\ge 0}$. For an index sequence $\cI$, let
\begin{align}
\RL_i(s)&\equiv \min_{\br_1^{i-1}\in\cR_{i-1}}\Big\{\sum_{j=1}^{i-1} r_j\, :\, \cI_{i-1}(\br_1^{i-1}) \ge s\Big\}\,, \label{eq:RL}\\
\cImin(i) &\equiv \min_{\br_1^{i}\in\cR_{i}}\, \cI_i(\br_1^{i}) \, ,
\;\;\cImax(i) \equiv \max_{\br_1^{i}\in\cR_{i}}\, \cI_i(\br_1^{i}) \, .
\end{align}
\end{definition}
As concrete examples of the last definition, for a generalized alpha investing rule,
the  current potentials $\{W(i)\}_{i\in \naturals}$, potentials
at the last rejection $\{W(\tau_i)\}_{i\in \naturals}$ and total number of rejections $\{R(i)\}_{i\in\naturals}$ are index sequences.

\begin{thm}\label{thm:gen_dep}
Consider a generalized alpha investing rule and assume that the test level $\alpha_j$ is determined based on index function
$\cI_{j-1}$. Namely, for each $j\in\naturals$ there exists a function $g_j:\reals_{\ge 0} \to [0,1]$ such that
$\alpha_j= g_j(\cI_{j-1}(\bR_1^{j-1}))$. Further, assume  $g_j(\,\cdot\,)$  to be nondecreasing
and weakly differentiable with weak derivative $\dg_j(s)$. 

Then, the following upper bound holds for general dependencies among $p$-values:
\begin{eqnarray}\label{eq:BoundDependent}
\FDR(n) \le  \sum_{i=1}^n \bigg\{ g_i(\cImin(i-1))+
\int_{\cImin(i-1)}^{\cImax(i-1)}\, \frac{\dg_i(s)}{\RL_i(s)+1} \de s\bigg\}\,.
\end{eqnarray}
\end{thm}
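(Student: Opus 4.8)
The plan is to bound the false discovery rate by decomposing $V^\theta(n) = \sum_{i=1}^n V^\theta_i$ and controlling each term $\E\{V^\theta_i/(R(n)\vee 1)\}$ separately, then summing. For a fixed true null index $i$, on the event that a false discovery occurs at time $i$ we have $p_i \le \alpha_i = g_i(\cI_{i-1}(\bR_1^{i-1}))$, and since $\alpha_i$ is $\cF_{i-1}$-measurable while $p_i$ is uniform on $[0,1]$, the conditional probability of $V^\theta_i = 1$ given $\cF_{i-1}$ is at most $\alpha_i$ — but because we do not assume independence we cannot simply condition and multiply when $R(n)$ also depends on $p_i$ and later $p$-values. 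The standard device here (going back to the Benjamini–Yekutieli style argument) is to write, for each true null $i$,
\begin{align}
\E\Big\{\frac{V^\theta_i}{R(n)\vee 1}\Big\} \le \E\Big\{\frac{\ind\{p_i\le \alpha_i\}}{R(n)\vee 1}\Big\}
= \E\Big\{\int_0^1 \frac{\ind\{p_i\le \alpha_i\}}{R(n)\vee 1}\,\de p_i\Big\}\,,
\end{align}
using the fact that $p_i\sim\Unif([0,1])$, and then exchanging the roles of "the realized $p_i$" and "an integration variable". The key is to rewrite the integrand so that $R(n)$ is replaced by something that is monotone in $p_i$ and can be lower bounded by a deterministic quantity once we fix the event $\{\alpha_i \ge s\}$.

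Concretely, I would argue as follows. Fix a true null $i$. Condition on all $p$-values except $p_i$; then $\alpha_i$ is a deterministic threshold (it depends only on $\bR_1^{i-1}$, hence on $p_1,\dots,p_{i-1}$ and the other randomness) and $R(n)$ is a non-increasing step function of $p_i$ — lowering $p_i$ can only create more rejections, and by the monotonicity hypothesis of the rule (not needed here since $\alpha_j$ depends on $\cI_{j-1}$ which is nondecreasing in rejections and $g_j$ is nondecreasing) more rejections propagate forward. So on the event $\{V^\theta_i = 1\} = \{p_i \le \alpha_i\}$ we have $R(n) \ge R_i^{\rm val}$ where $R_i^{\rm val}$ is the number of rejections that would occur if $p_i$ were set just below $\alpha_i$; and if $\alpha_i \ge s$ for some threshold level $s$, then among the first $i-1$ decisions we have already made at least $\RL_i(s)$ rejections (by definition of $\RL_i$ as the minimum number of rejections in $\bR_1^{i-1}$ compatible with $\cI_{i-1}\ge s$), plus the rejection at $i$ itself, so $R(n) \ge \RL_i(s) + 1$. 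Combining,
\begin{align}
\E\Big\{\frac{V^\theta_i}{R(n)\vee 1}\Big\}
\le \E\Big\{\frac{\ind\{p_i\le \alpha_i\}}{\RL_i(\alpha_i)+1}\Big\}
\le \E\Big\{\frac{\alpha_i}{\RL_i(\alpha_i)+1}\Big\}\,,
\end{align}
where the last step uses that conditioning on everything but $p_i$, the quantity $\RL_i(\alpha_i)$ becomes deterministic and $\prob(p_i \le \alpha_i\mid \text{rest}) = \alpha_i$. (Here one must be careful: $\RL_i(\alpha_i)$ as I have written it depends on $\alpha_i$, which itself depends only on $\bR_1^{i-1}$, so it is indeed measurable with respect to the conditioning $\sigma$-algebra — this is the crucial reason the bound is stated in terms of $\cI_{i-1}$ rather than a fully general $\cF_{i-1}$-measurable level.)

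The final step is to convert $\E\{\alpha_i/(\RL_i(\alpha_i)+1)\}$ into the integral form in \eqref{eq:BoundDependent}. Writing $\alpha_i = g_i(\cI_{i-1})$ and using $g_i(t) = g_i(\cImin(i-1)) + \int_{\cImin(i-1)}^{t}\dg_i(s)\,\de s$ for $t\in[\cImin(i-1),\cImax(i-1)]$, we get
\begin{align}
\frac{g_i(\cI_{i-1})}{\RL_i(g_i(\cI_{i-1}))+1}
\le g_i(\cImin(i-1)) + \int_{\cImin(i-1)}^{\cI_{i-1}} \frac{\dg_i(s)}{\RL_i(g_i(s))+1}\,\de s\,,
\end{align}
where I have used that $\RL_i$ is nonnegative so the first term is bounded by $g_i(\cImin(i-1))$, and for the integral term I bound $\RL_i(g_i(\cI_{i-1})) \ge \RL_i(g_i(s))$ for $s \le \cI_{i-1}$ (since $\RL_i\circ g_i$ is nondecreasing: larger index forces at least as many prior rejections, and $g_i$ is nondecreasing). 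After a harmless reparametrization $\RL_i(g_i(s))$ versus $\RL_i(s)$ — matching the paper's notation, $\RL_i$ is applied to the index-function value directly — and extending the inner integral's upper limit from $\cI_{i-1}$ to $\cImax(i-1)$, taking expectations and summing over $i$ yields \eqref{eq:BoundDependent}; contributions from non-null indices $i$ are dropped since $V^\theta_i = 0$ there. The main obstacle I anticipate is the measurability/monotonicity bookkeeping in the middle step: verifying rigorously that $R(n)$ restricted to the event $\{p_i\le\alpha_i\}$ dominates $\RL_i(\alpha_i)+1$ requires that decreasing $p_i$ never destroys a later rejection, which relies on the structural assumption that $\alpha_j$ is a nondecreasing function of an index sequence that is itself monotone in the rejection history — so I would need to check this "rejections only beget rejections" monotonicity carefully, paying attention to the fact that $\tau_j$-type dependencies could in principle behave non-monotonically. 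A clean way around this is to note that it suffices to lower bound $R(n)$ by $R(i)$, the rejections up to time $i$ only, which on $\{p_i\le\alpha_i\}$ with $\alpha_i\ge s$ is at least $\RL_i(s)+1$ by definition of $\RL_i$, avoiding any claim about the future at all.
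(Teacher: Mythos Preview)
Your decomposition $\FDR^\theta(n)=\sum_{i\in\Omega_0}\E\big\{V_i^\theta/(R(n)\vee 1)\big\}$ and the bound $R(n)\ge R(i)\ge \RL_i(\cI_{i-1})+1$ on the event $\{p_i\le\alpha_i\}$ are correct, and they bring you to the same intermediate quantity the paper reaches,
\[
\E\Big\{\frac{\ind\{p_i\le g_i(\cI_{i-1})\}}{\RL_i(\cI_{i-1})+1}\Big\}\,.
\]
The genuine gap is your next step, where you replace $\ind\{p_i\le\alpha_i\}$ by $\alpha_i$ inside the expectation via ``$\prob(p_i\le\alpha_i\mid\text{rest})=\alpha_i$''. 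That uses the \emph{conditional} uniformity of $p_i$ given $p_1,\dots,p_{i-1}$ (equivalently given $\cI_{i-1}$), which is precisely what Theorem~\ref{thm:gen_dep} does \emph{not} assume. Under general dependence, $\cI_{i-1}$ can be correlated with $p_i$ through the earlier $p$-values, and the inequality
$\E\{\ind\{p_i\le\alpha_i\}/(\RL_i(\cI_{i-1})+1)\}\le\E\{\alpha_i/(\RL_i(\cI_{i-1})+1)\}$
can fail. A concrete two-point example: let $\cI_{i-1}\in\{s_0,s_1\}$ with $g_i(s_0)=0.01$, $\RL_i(s_0)=0$, $g_i(s_1)=0.99$, $\RL_i(s_1)=99$, and couple so that $\cI_{i-1}=s_0$ exactly when $p_i\le 0.99$; then the left side equals $0.01$ while the right side is about $0.00999$.

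The paper avoids conditioning altogether. From the same intermediate expectation, viewed as an integral against the joint law of $(p_i,\cI_{i-1})$, it bounds the integrand by a function of $p_i$ alone: on $\{p_i\le g_i(\cI_{i-1})\}$ one has $\cI_{i-1}\ge g_i^{-1}(p_i)$, and since $\RL_i$ is nondecreasing this gives $\RL_i(\cI_{i-1})\ge \RL_i(g_i^{-1}(p_i))$, so
\[
\frac{\ind\{p_i\le g_i(\cI_{i-1})\}}{\RL_i(\cI_{i-1})+1}\;\le\;\frac{\ind\{p_i\le g_i(\cImax(i-1))\}}{\RL_i(g_i^{-1}(p_i))+1}\,.
\]
The right side depends only on $p_i$, so integrating reduces to the \emph{marginal} law of $p_i$, which is uniform for a true null regardless of dependence. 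A change of variables $\tau=g_i(s)$ then yields the integral in~\eqref{eq:BoundDependent}. Your final conversion to integral form has the right shape, but it must be applied to this $p_i$-only bound, not to $\E\{\alpha_i/(\RL_i(\cI_{i-1})+1)\}$.
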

The proof of this theorem is presented in Appendix~\ref{sec:ProofDependent}.

\begin{example}[$\FDR$ control for dependent test statistics via modified $\LORD$]
We can modify  \LORD as to achieve FDR control even under dependent test statistics. As before, 
we let $\psi_i=b_0$. However,
we fix a sequence $\bxi= (\xi_i)_{i\in\naturals}$, $\xi_i\ge 0$, and set test levels according to rule 
$\alpha_i = \varphi_i=\xi_i\, W(\tau_i)$. 
In other words, compared with the original $\LORD$ procedure, we discount the capital accumulated at the last discovery
as a function of the number of hypotheses tested so far, rather than the number of hypotheses tested since the last discovery.

This rule satisfies the assumptions
of Theorem~\ref{thm:gen_dep}, with index sequence $\cI_{i-1} = W(\tau_i)$ and  $g_i(s) = \xi_i s$. Further, $\cImin(0) =w_0$, $\cImin(i-1) =b_0$ for $i\ge 2$,
and $\cImax(i-1)\le w_0+b_0(i-1)$, and $\RL_i(s) \ge (\frac{s-w_0}{b_0})_+$. Substituting in Equation~(\ref{eq:BoundDependent}), we obtain,
assuming $w_0\le b_0$
 \begin{eqnarray*}
\FDR(n) &\le& w_0 \xi_1 + 
\sum_{i=2}^n \left(b_0\xi_i + \int_{b_0}^{w_0+b_0(i-1)} \frac{b_0 \xi_i}{s-w_0+b_0}\,\de s\right)\\
&\le& w_0 \xi_1 +  \sum_{i=2}^n b_0 \xi_i (1+\log(i))\\
& \le& \sum_{i=1}^n b_0 \xi_i (1+\log(i)) \,.
\end{eqnarray*}
Hence, this rule controls \FDR\, below level $\alpha$ under general dependency structure, if coefficients $(\xi_i)_{i\in \naturals}$ are set such that $\sum_{i=1}^\infty \xi_i(1+\log(i))\le \alpha/b_0$.
\end{example}
%
%
\section{Statistical power}\label{sec:power}

The class of generalized alpha investing rules is quite broad. In order to compare different approaches, 
it is important to estimate their statistical power.

Here, we consider a mixture model wherein each null hypothesis is false with probability $\pi_1$ independently
of other hypotheses, and the $p$-values corresponding to different hypotheses are mutually independent.
Under the null hypothesis $H_i$, we have $p_i$ uniformly distributed in $[0,1]$ and under its alternative,
$p_i$ is generated according to a distribution whose c.d.f is denoted by $F$. We let $G(x) = \pi_0\,x+ \pi_1\, F(x)$,
with $\pi_0+\pi_1=1$, be
the marginal distribution of the $p$-values.  For presentation clarity, we assume that $F(x)$ is continuous. 
 
While the mixture model is admittedly idealized, it offers a natural ground to compare online procedures to offline 
procedures. Indeed, online approaches are naturally favored if the true non-nulls arise at the beginning of the sequence of 
hypotheses, and naturally unfavored if they only appear later. 
On the other hand, if the $p$-values can be processed offline, we can always apply an online rule after a random re-ordering 
of the hypotheses. By exchangeability, we expect the performance to be similar to the ones in the mixture model. 

The next theorem lower bounds the statistical power of \LORD under the mixture model. This lower bound applies to any of the three versions of $\LORD$.
\begin{thm}\label{thm:dependence}
Consider the mixture model with $G(x)$ denoting the marginal distribution of $p$-values. Further, let $\Omega_0(n)$ 
(and its complement $\Omega_0^c(n)$) be the subset of true nulls (non-nulls), among the first $n$ hypotheses. 
Then, the average power of \LORD rule is  almost surely bounded as follows:
\begin{align}\label{eq:powerLORD}
\lim\inf_{n\to \infty} \frac{1}{|\Omega_0^c(n)|} \sum_{i\in \Omega_0^c(n)} R_i  \ge 
\Big(\sum_{m=1}^\infty \prod_{\ell=1}^m \big(1-G(b_0 \gamma_\ell)\big) \Big)^{-1}\,.
\end{align}
\end{thm}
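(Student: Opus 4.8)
The plan is to track the evolution of the wealth process $W(j)$ under the mixture model and identify a renewal structure tied to the discovery times. In \LORD, between consecutive discoveries the wealth is deterministically depleted, and at each discovery it jumps up by $b_0$; moreover the test level used at a step that is $\ell$ steps past the last discovery $\tau$ is exactly $\gamma_\ell W(\tau)$. Since $\gamma_\ell$ is non-increasing and $W(\tau)\ge w_0$-type lower bounds are crude, the cleanest route is to lower bound the wealth at every discovery epoch by $b_0$ (the guaranteed payoff $\psi_i = b_0$), so that the test level $\ell$ steps after a discovery is at least... no: it is at most $\gamma_\ell W(\tau)$ but we want a \emph{lower} bound on discovery probability, hence a lower bound on $\alpha_i$. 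The key observation is that $W(\tau_i)\ge b_0$ for every $i$ strictly after the first discovery (since a discovery adds $b_0$ and the wealth never goes negative), so $\alpha_i = \gamma_{i-\tau_i} W(\tau_i) \ge b_0\gamma_{i-\tau_i}$. This gives a stochastic lower bound: the actual \LORD process dominates (in the sense of making discoveries at least as often as) a simpler process in which, after each discovery, the $\ell$-th subsequent hypothesis is rejected independently with probability at least $G(b_0\gamma_\ell)$ (the probability that a $p$-value drawn from the marginal $G$ falls below the threshold $b_0\gamma_\ell$).

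The second step is a renewal-reward computation on this dominating process. Let $M$ be the number of steps from one discovery to the next (the inter-discovery gap). In the simplified process, the probability that the gap exceeds $m$ is at most $\prod_{\ell=1}^m (1 - G(b_0\gamma_\ell))$, since each of the $m$ intervening hypotheses must fail to be rejected. Hence $\E[M] \le \sum_{m=0}^\infty \prob(M > m) \le 1 + \sum_{m=1}^\infty \prod_{\ell=1}^m(1-G(b_0\gamma_\ell))$; more carefully, using $\prob(M\ge m+1)\le \prod_{\ell=1}^m(1-G(b_0\gamma_\ell))$ and $\sum_{m\ge 1}\prob(M\ge m)=\E[M]$, one gets $\E[M]\le \sum_{m=1}^\infty\prod_{\ell=1}^m(1-G(b_0\gamma_\ell))$ after absorbing the $m=0$ term appropriately (the indexing in the statement suggests the sum starts at $m=1$ with an empty-product convention, so I will match that bookkeeping). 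By the renewal theorem (or the elementary renewal strong law), the long-run density of discoveries is $1/\E[M] \ge \big(\sum_{m=1}^\infty\prod_{\ell=1}^m(1-G(b_0\gamma_\ell))\big)^{-1}$, almost surely.

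The third step converts the density of \emph{all} discoveries into the \emph{power}, i.e. the density of \emph{true} discoveries among the non-nulls. Under the mixture model each hypothesis is independently non-null with probability $\pi_1$, and conditionally on being non-null its $p$-value has c.d.f.\ $F$. The quantity on the left of \eqref{eq:powerLORD} is $\frac{1}{|\Omega_0^c(n)|}\sum_{i\in\Omega_0^c(n)}R_i$, i.e. the fraction of the $\approx \pi_1 n$ non-null hypotheses that get rejected. One relates this to the total discovery rate: of the $R(n)$ discoveries, a fraction converges (by the law of large numbers / exchangeability and the fact that the rejection probability at a "generic" step splits as $\pi_0\cdot(\text{level}) + \pi_1\cdot F(\text{level})$) to the appropriate ratio $\pi_1 F / G$ evaluated at the relevant thresholds. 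Combining the density-of-discoveries bound with this decomposition, and dividing by $\pi_1$ (the density of non-nulls), yields the claimed lower bound on average power. I would carry this out by a coupling/domination argument at the level of sample paths plus a second-moment or ergodic argument to pass from the averaged inequality to the almost-sure one.

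\textbf{Main obstacle.} The delicate point is making the domination argument rigorous: \LORD's test levels depend on the \emph{entire} past through $W(\tau_i)$, which is itself random and correlated with which hypotheses were non-null, so the "simplified process" is not literally a subprocess of \LORD. I expect the cleanest fix is to lower bound $W(\tau_i)$ by $b_0$ uniformly (valid for all $i$ after the first discovery) and then build a monotone coupling showing that replacing the true, larger test levels $\gamma_{i-\tau_i}W(\tau_i)$ by the smaller surrogate $b_0\gamma_{i-\tau_i}$ only decreases discoveries — this requires the monotonicity of \LORD already established in the excerpt, together with care that shrinking thresholds does not perversely lengthen later wealth (it can only lengthen gaps, which is the direction we want for an upper bound on $\E[M]$). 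The other bookkeeping nuisance is handling the initial transient (the first discovery, where $W(\tau_1)=w_0$ may differ from $b_0$), but since we only need a $\liminf$ as $n\to\infty$, this transient washes out. Passing from an expectation bound to the stated almost-sure $\liminf$ also needs the renewal SLLN rather than just the elementary renewal theorem, but that is standard once the i.i.d.-gap domination is in place.
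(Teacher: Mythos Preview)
Your plan is essentially the paper's proof: dominate \LORD\ by the modified rule that uses fixed levels $b_0\gamma_{i-\tau_i}$ (justified by $W(\tau_i)\ge b_0$ after the first discovery), then exploit the i.i.d.\ inter-discovery structure of the modified rule via renewal theory. Two places where the paper's execution is cleaner than your sketch. First, your ``main obstacle'' dissolves: run the modified rule on the \emph{same} $p$-value sequence; since its thresholds are pointwise no larger than \LORD's (after the transient you already flagged), its rejection set is a subset sample-path by sample-path --- no appeal to the abstract monotonicity of \LORD\ is needed, and there is no wealth-feedback worry because the modified rule's levels depend only on $i-\tau_i$, not on any accumulated wealth. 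Second, for converting discovery rate to power, the paper bypasses your $\pi_1 F/G$ decomposition (awkward because the threshold varies within a cycle) by a direct renewal-reward SLLN: with cycle lengths $\Delta_i=\tau_i-\tau_{i-1}$ and rewards $r_i=\ind(\tau_i\in\Omega_0^c)$, the pairs $(r_i,\Delta_i)$ are i.i.d.\ for the modified rule, so $n^{-1}\sum_{i\le R(n)}r_i\to\E[r_1]/\E[\Delta_1]$ a.s.; dividing by $|\Omega_0^c(n)|/n\to\pi_1$ finishes the bound in one line.
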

Proof of Theorem~\ref{thm:dependence} is deferred to Appendix~\ref{app:dependence}. The lower bound is in fact the exact power for a 
slightly weaker rule that resets the potential at level $b_0$ after each discovery (in other words, Equation~(\ref{eq:PhiLORD}) is replaced by $\varphi_i =\gamma_{i-\tau_i}b_0$). This procedure is weaker only when multiple discoveries are made in a short interval of time. Hence, the above bound is expected to be accurate when $\pi_1$ is small, and discoveries are rare.

Recall that in \LORD, parameters $\bgamma= (\gamma_\ell)_{\ell=1}^\infty$ can be any sequence of non-negative, monotone non-increasing numbers that sums up to one. This leaves a great extent of flexibility in choosing $\gamma$. The above 
lower bound on statistical power under the mixture model provides useful insight on what are good choices of $\bgamma$.

We first simplify the lower bound further. We notice that 
$\prod_{\ell=1}^m \big(1-G(b_0 \gamma_\ell)\big) \le \exp\big(-\sum_{\ell=1}^m G(b_0 \gamma_\ell)\big)$.
Further, by the monotonicity property of $\bgamma$, we have $G(b_0\gamma_\ell) \ge G(b_0\gamma_m)$ for $\ell\le m$. Thus, 
\begin{align*}
\lim_{n\to \infty} \frac{1}{|\Omega_0^c(n)|} \sum_{i\in \Omega_0^c(n)} R_i \ge \cA(G,\bgamma)\,, \quad 
\cA(G,\bgamma) = \Big(\sum_{m=1}^\infty e^{-m G(b_0 \gamma_m)} \Big)^{-1}\,.
\end{align*}
In order to choose $\bgamma$, we use the lower bound $\cA(G,\bgamma)$ as a surrogate objective function.
We let $\bgo$ be the sequence that maximizes $\cA(G,\bgamma)$. The following proposition characterizes the 
asymptotic behavior of $\bgo$.
\begin{propo}\label{pro:optpower}
Let $\bgo$ be the sequence that maximizes $\cA(G,\bgamma)$ under the constraint $\sum_{\ell=1}^\infty \gamma_m = 1$. 
Further suppose that $F(x)$ is concave and differentiable on an interval $[0,x_0)$ for some $x_0 \in (0,1)$.   
Then there is a constant $\eta = \eta(G,\pi_1)$ independent of $m$ such that, for all $m$ large enough, the following holds true:
\begin{eqnarray*}
\quad\;  \frac{1}{b_0} G^{-1}\Big(\frac{1}{m} \log\Big(\frac{m(1-\pi_1)}{\eta}\Big) \Big)\le \gamma^{\rm opt}_m
\le \frac{1}{b_0} G^{-1}\Big(\frac{2}{m} \log \Big(\frac{1}{\eta G^{-1}(1/m)}\Big) \Big) \, .
\end{eqnarray*}
\end{propo}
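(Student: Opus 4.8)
The plan is to treat the optimization of $\cA(G,\bgamma)$ as a constrained optimization problem and analyze the Karush--Kuhn--Tucker conditions. Since maximizing $\cA(G,\bgamma) = \big(\sum_{m\ge 1} e^{-mG(b_0\gamma_m)}\big)^{-1}$ is equivalent to minimizing $S(\bgamma) \equiv \sum_{m\ge 1} e^{-mG(b_0\gamma_m)}$ subject to $\sum_m \gamma_m = 1$ and $\gamma_m \ge 0$, I would first introduce a Lagrange multiplier $\lambda$ for the equality constraint and write the stationarity condition. Differentiating the $m$-th term of $S$ with respect to $\gamma_m$ gives $-m b_0 G'(b_0\gamma_m) e^{-mG(b_0\gamma_m)}$, so the KKT condition reads $m b_0 G'(b_0\gamma_m) e^{-mG(b_0\gamma_m)} = \lambda$ for every $m$ with $\gamma_m > 0$. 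Setting $u_m \equiv G(b_0\gamma_m)$, this is $m b_0 G'(G^{-1}(u_m)) e^{-m u_m} = \lambda$, which I would rearrange to solve (approximately) for $u_m$.

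The key step is then to extract the asymptotics of $u_m$ from this transcendental equation. Taking logarithms: $\log m + \log b_0 + \log G'(G^{-1}(u_m)) - m u_m = \log\lambda$, so $u_m = \frac{1}{m}\big(\log m - \log\lambda + \log b_0 + \log G'(G^{-1}(u_m))\big)$. The term $\log G'(G^{-1}(u_m))$ is the source of the gap between the upper and lower bounds in the proposition: as $m\to\infty$, $u_m \to 0$, so $G^{-1}(u_m)\to 0$, and the behavior of $G'$ near $0$ matters. For the lower bound on $\gamma_m^{\rm opt}$ I would use that $u_m \geq \frac{1}{m}\log\big(\frac{m(1-\pi_1)}{\eta}\big)$ for a suitable constant $\eta$ (absorbing $b_0$ and a bound on $G'$ — note $G'(0^+) = \pi_0 + \pi_1 F'(0^+)$, and concavity of $F$ on $[0,x_0)$ guarantees $G'$ is bounded above near $0$ by $G'(0^+)$, and bounded below by $\pi_0 = 1-\pi_1$), then apply $G^{-1}$ (monotone) and divide by $b_0$. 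For the upper bound I would bound $u_m$ from above by tracking that $G'(G^{-1}(u_m)) \le G'(0^+) \le C/G^{-1}(\text{something})$ is too crude; instead one uses the concavity of $F$ to get $G(x) \ge G'(0^+) x/2$ or similar on $[0,x_0)$, hence a bound like $G^{-1}(u_m) \le 2u_m/G'(0^+)$, feeding back the factor $\log(1/(\eta G^{-1}(1/m)))$; after some bookkeeping one obtains $u_m \le \frac{2}{m}\log\big(\frac{1}{\eta G^{-1}(1/m)}\big)$ for $m$ large, and again apply $G^{-1}$ and divide by $b_0$.

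Before the asymptotic analysis I would need to justify that the optimizer $\bgo$ exists, is unique, and satisfies the KKT equations — in particular that $\gamma_m^{\rm opt} > 0$ for all large $m$ (so the stationarity condition, not the boundary condition $\gamma_m = 0$, applies) and that the Lagrange multiplier $\lambda$ is well-defined and finite. Existence follows since $S$ is continuous and coercive-ish on the simplex $\{\bgamma \ge 0, \sum \gamma_m = 1\}$ (one has to be slightly careful because this is an infinite-dimensional simplex — I would argue via a truncation/compactness argument, noting that since $\sum\gamma_m = 1$ the tail is forced to be small, and $S$ restricted to the feasible set attains its infimum); uniqueness follows from strict convexity of each summand $\gamma \mapsto e^{-mG(b_0\gamma)}$ on the region where $G$ is concave (which holds near $0$ since $F$ is concave there, so $G = \pi_0 x + \pi_1 F$ is concave there, making $e^{-mG}$ convex as a decreasing convex function of a concave function). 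Also I must confirm the monotone non-increasing constraint on $\bgamma$ is automatically satisfied by the KKT solution — this should follow because the stationarity equation $m b_0 G'(b_0\gamma_m) e^{-mG(b_0\gamma_m)} = \lambda$ forces $\gamma_m$ to decrease in $m$ (the left side, as a function of $\gamma_m$ for fixed $m$, and its dependence on $m$, arranges this), so the constraint is not binding and can be dropped.

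The main obstacle I expect is the upper bound: controlling the feedback term $\log G'(G^{-1}(u_m))$ sharply enough to land exactly at $\frac{2}{m}\log(1/(\eta G^{-1}(1/m)))$ rather than something messier. This requires using the concavity and differentiability of $F$ on $[0,x_0)$ in a quantitative way — essentially that $G$ is "not too flat" near $0$ — to convert a bound on $G'(G^{-1}(u_m))$ into a bound involving $G^{-1}(1/m)$, and then iterating the fixed-point relation $u_m = \frac{1}{m}(\log m + O(\log\log m) + \log G'(G^{-1}(u_m)))$ once or twice to stabilize the estimate. A secondary technical nuisance is handling the infinite-dimensional nature of the optimization rigorously (interchanging differentiation and summation, justifying the KKT conditions in $\ell^1$), which I would dispatch with standard dominated-convergence arguments given the rapid decay $e^{-mG(b_0\gamma_m)}$.
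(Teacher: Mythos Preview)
Your approach is essentially the paper's: write the Lagrangian, derive the stationarity condition $mG'(\beta_m)e^{-mG(\beta_m)}=\eta$ (with $\beta_m=b_0\gamma_m$), and extract the lower bound from $G'(x)\ge 1-\pi_1$. That part matches exactly.

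The gap is in your upper-bound argument. You propose to use ``$G(x)\ge G'(0^+)x/2$ from concavity,'' but concavity of $G$ on $[0,x_0)$ with $G(0)=0$ gives the opposite direction: the chord inequality yields $G(x)\le G'(0^+)x$ and, more to the point, $G'(x)\le G(x)/x$. The paper uses this last inequality directly in the KKT relation to obtain
\[
m\,G(\beta_m^{\rm opt})\,e^{-mG(\beta_m^{\rm opt})}\;\ge\;\eta\,\beta_m^{\rm opt}\,.
\]
From the already-established lower bound one has $mG(\beta_m^{\rm opt})\ge 1$ for large $m$, so one is on the decreasing branch of $\xi\mapsto\xi e^{-\xi}$. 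Letting $\xi_0\ge 1$ solve $\xi_0 e^{-\xi_0}=\eta\beta_m^{\rm opt}$, one checks $\xi_0\le -2\log(\eta\beta_m^{\rm opt})$ (since $\xi/2\ge\log\xi$ for all $\xi>0$), whence
\[
mG(\beta_m^{\rm opt})\;\le\;2\log\frac{1}{\eta\beta_m^{\rm opt}}\;\le\;2\log\frac{1}{\eta\,G^{-1}(1/m)}\,,
\]
the last step feeding back the lower bound $\beta_m^{\rm opt}\ge G^{-1}(1/m)$. This avoids the fixed-point iteration you anticipate and lands on the stated constant $2$ cleanly.

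Your concerns about existence, uniqueness, positivity of $\gamma_m^{\rm opt}$, and the monotonicity constraint are legitimate but are treated formally in the paper; the argument there proceeds directly from the stationarity equation without addressing the infinite-dimensional technicalities you raise.
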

The proof of Proposition~\ref{pro:optpower} is given in Appendix~\ref{app:optpower}.

The concavity assumption of $F(x)$ requires the density of non-null $p$-values  (i.e., $F'(x)$) to be non-increasing in a neighborhood $[0,x_0)$. This is a reasonable assumption because significant $p$-values are generically small and the assumption states that, in a neighborhood of zero, smaller values have higher density than larger values. 
In Appendix~\ref{gammaEx}, we compute the optimal sequence $\bgo$ for two case examples.

%
%
\section{Numerical simulations}\label{sec:simulation-syn}

In this section we carry out some numerical experiments with synthetic data. For an application with real data,
we refer to Appendix~\ref{sec:AdClick}.

\subsection{Comparison with off-line rules}
\label{sec:ComparisonOffline}

In our first experiment, we consider hypotheses $\cH(n) = (H_1, H_2, \dotsc, H_n)$ concerning the means of  normal 
distributions. The null hypothesis is  $H_j:\; \theta_j = 0$.  We observe test statistics 
$Z_j = \theta_j +\eps_j$, where $\eps_j$ are independent  standard normal random variables.  
Therefore, one-sided $p$-values are given by  $p_j = \Phi(-Z_j)$, and two sided $p$-values by $p_j = 2\Phi(-|Z_j|)$. 
Parameters $\theta_j$ are set according to a  mixture model:
\begin{align}
\theta_j \sim \begin{cases}
0 & \text{w.p.} \quad 1-\pi_1\,,\\
F_1 & \text{w.p.}\quad  \pi_1\,.
\end{cases}
\end{align}
In our experiment, we set $n = 3000$ and  and use the following three choices of the non-null distribution:
%
%

{\bf Gaussian.} In this case the alternative $F_1$ is $\normal(0,\sigma^2)$ with $\sigma^2 = 2\log n$. 
This choice of $\sigma$ produces parameters $\theta_j$
in the interesting regime in which they are detectable, but not easily so.
In order to see this recall that, under the global null hypothesis, $Z_i\sim \normal(0,1)$ and  $\max_{i\in [n]}Z_i\sim \sqrt{2\log n}$ 
with high probability. Indeed $\sqrt{2\log n}$ is the minimax amplitude for estimation in the sparse Gaussian 
sequence model \cite{DJ94a,Jo94a}.

In this case we carry out two-sided hypothesis testing.

{\bf Exponential.} In this case the alternative  $F_1$ is exponential $\Exp(\lambda)$ with mean $\lambda^{-1} = \sqrt{2\log n}$.
The rationale for this choice is the same given above. 
The alternative is known to be non-negative, and hence we carry out one-sided hypothesis testing.

{\bf Simple.} In this example, the non-nulls are constant and equal to $A = \sqrt{\log n}$. 
Again, we carry out one-sided tests in this case. 
%

We consider three online testing rules, namely alpha investing (AI),  $\LORD$ (a special case of alpha spending with 
rewards) and  Bonferroni.  We also simulate the expected reward optimal (ERO) 
alpha investing rule introduced in \cite{generalized-alpha}. For a brief overview of the ERO notion, recall that in a generalized alpha investing rule, pay-out $\varphi_j$,
test level $\alpha_j$ and the reward $\psi_j$ should satisfy inequalities~\eqref{eq:A1a} and~\eqref{eq:A1b}. An ERO procedure finds the optimal point of trade-off
between $\alpha_j$ and $\psi_j$, for a given value of $\varphi_j$, where optimality criterion is the expected reward of the current test, i.e., $\E(R_j) \psi_j$.  
We compare performance of these online methods with the (adaptive) BH procedure, which as emphasized already, is an offline testing rule:
it has access to the number of hypotheses and $p$-values in advance, while the former algorithms receive $p$-values in an online manner, without knowing the total number of hypotheses. We use Storey's variant of BH rule,
 that is better suited to cases in which the fraction of non-nulls $\pi_1$ is not necessarily small
\cite{storey2002direct}. 
In all cases, we set as our objective to control $\FDR$  below $\alpha=0.05$.

The different procedures are specified as follows: 
%
%

{\bf Alpha Investing.} We set  test levels according to 
\begin{align}\label{eq:alphaj-clust}
\alpha_j = \frac{W(j)}{1+j-\tau_j}\,,
\end{align}
where $\tau_j$ denotes the time of the most recent discovery before time $j$. This proposal was 
introduced by~\cite{alpha-investing} and boosts statistical power in cases in which the non-null hypotheses
appear in batches. We use parameters $w_0=0.005$ (for the initial potential), and $b_0=\alpha-w_0 = 0.045$ (for the rewards). 
The rationale for this choice is that $b_0$ controls the 
evolution of the potential $W(n)$ for large $n$, while $w_0$ controls its initial value. Hence, the behavior of the 
resting rule for large $n$ is mainly driven by $b_0$.

Note that, by \cite[Corollary 2]{generalized-alpha}, this is an ERO alpha investing rule 
\footnote{Note that, since $\theta_j$ is unbounded under the alternative the maximal power is equal to one.},
under the Gaussian and exponential alternatives. It is worth noting that for the case of exponential alternatives,
alpha investing is indeed an ERO procedure, cf \cite[Theorem 2]{generalized-alpha}.

{\bf ERO alpha investing.} For the case of simple alternative, the maximum power achievable at test $i$
is $\rho_i= \Phi(A+\Phi^{-1}(\alpha_i))$. In this case, we
consider  ERO alpha investing \cite{generalized-alpha} defined by $\varphi_i = (1/10)\cdot
W(i-1)$, and with $\alpha_i$, $\psi_i$ given implicitly by the solution of $\varphi_i/\rho_i=\varphi_i/\alpha_i-1$
and $\psi_i = \varphi_i/\alpha_i+b_0-1$.  We use parameters $b_0=0.045$ and $w_0=0.005$.

{\bf LORD.}  We use $\LORD 3$, and choose the sequence $\bgamma = (\gamma_m)_{m\in \naturals}$ as follows:
\begin{align}\label{eq:gamma-sim}
\gamma_m = C \, \frac{\log (m\vee 2)}{me^{\sqrt{\log m}}} \,,
\end{align}
with $C$ determined by the condition 
$\sum_{m=1}^{\infty} \gamma_m = 1$, which yields  $C\approx 0.07720838$. This choice of $\bgamma$ is 
loosely motivated by Example E.2, given in Appendix~\ref{gammaEx}. Notice, however, that we do not assume the data to be generated with the model
treated in that example. Further, for this case we set parameters $w_0=0.005$ (for the initial potential), and $b_0= 0.045$ (for the rewards). 

{\bf Bonferroni.} We set the test levels as $\alpha_m = \gamma_m \alpha$, where the values of $\gamma_m$ are set
as per Equation~\eqref{eq:gamma-sim}, and therefore $\sum_{m=1}^\infty \alpha_m = \alpha$. 

{\bf Storey.} It is well known that the classical BH procedure satisfies  $\FDR\le \pi_0 \alpha$ where $\pi_0$ is the proportion of  true nulls. 
A number of adaptive rules have been proposed that use a plug-in estimate of $\pi_0$ as a multiplicative correction
 in the BH procedure~\cite{storey2002direct,meinshausen2006estimating,jin2007estimating,jin2008proportion}. Following~\cite{Blanchard:2009}, the adaptive test thresholds are given by $\alpha H(\bp)i/n$ (instead of $\alpha i/n$), where $H(\bp)$ is an estimate of $\pi_0^{-1}$, determined as a function of $p$-values, $\bp = (p_1, \dotsc, p_n)$. 

Here, we focus on Storey-$\lambda$ estimator given by~\cite{storey2002direct}:
\begin{align}
H(\bp) = \frac{(1-\lambda)n}{\sum_{i=1}^n \ind(p_i >\lambda)+1}\, .
\end{align}
Storey's estimator is in general an underestimate of $\pi_0^{-1}$.  A standard choice of $\lambda = 1/2$ is used in the SAM software~\cite{storey2003sam}. 
In~\cite{Blanchard:2009}, it is shown that the choice $\lambda = \alpha$ can have better properties under dependent $p$-values. 
In our simulations we tried both choices of $\lambda$.

Our empirical results are presented in Figure~\ref{fig:FDRPower}.
As we see all the rules control $\FDR$ below the nominal level $\alpha = 0.05$,  as guaranteed by 
Theorem \ref{thm:FDR-control}.
While BH and the generalized alpha investing schemes 
($\LORD$, alpha investing, ERO alpha investing) exploit most of  the allowed amount of false discoveries, 
Bonferroni is clearly too conservative.
A closer look reveals that the generalized alpha investing schemes are somewhat more
conservative than BH. Note however that the present simulations assume the non-nulls to arrive at random times, which is
a more benign scenario than the one considered in Theorem \ref{thm:FDR-control}, where arrival times of non-nulls are adversarial.

In terms of power, 
 $\LORD$ appears particularly effective for small $\pi_1$, while standard alpha investing suffers a loss of power for large $\pi_1$. This is related to the fact that $\varphi_j=\alpha_j/(1-\alpha_j)$ in this case. As a consequence the 
rule can effectively stop after a large number of discoveries, because $\alpha_j$ gets close to one.

\begin{figure}
\vspace{-0.5cm}

\begin{tabular}{lll}
\includegraphics[width = 6.7cm]{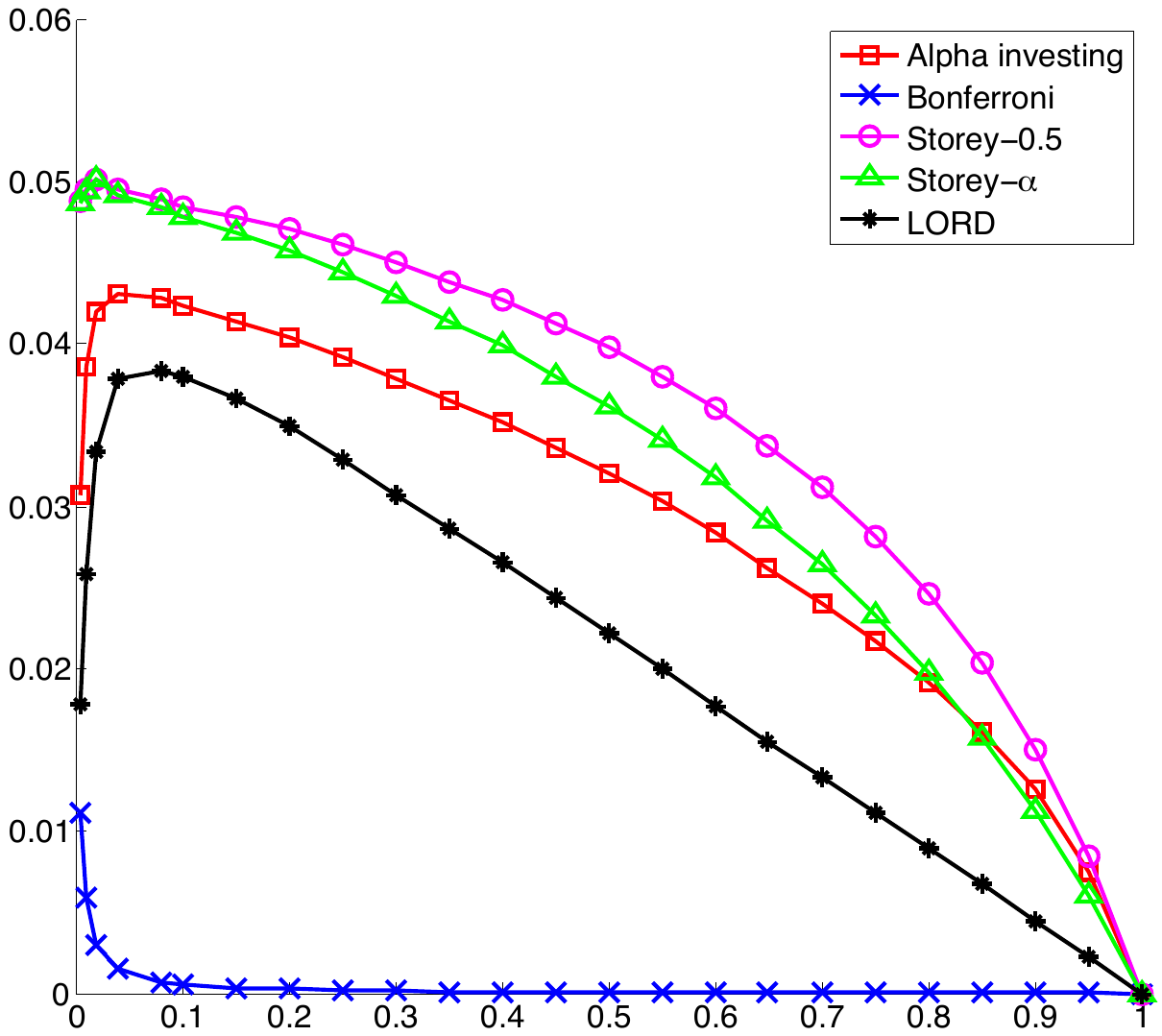}&\phantom{AAAAA}&
\includegraphics[width=6.6cm]{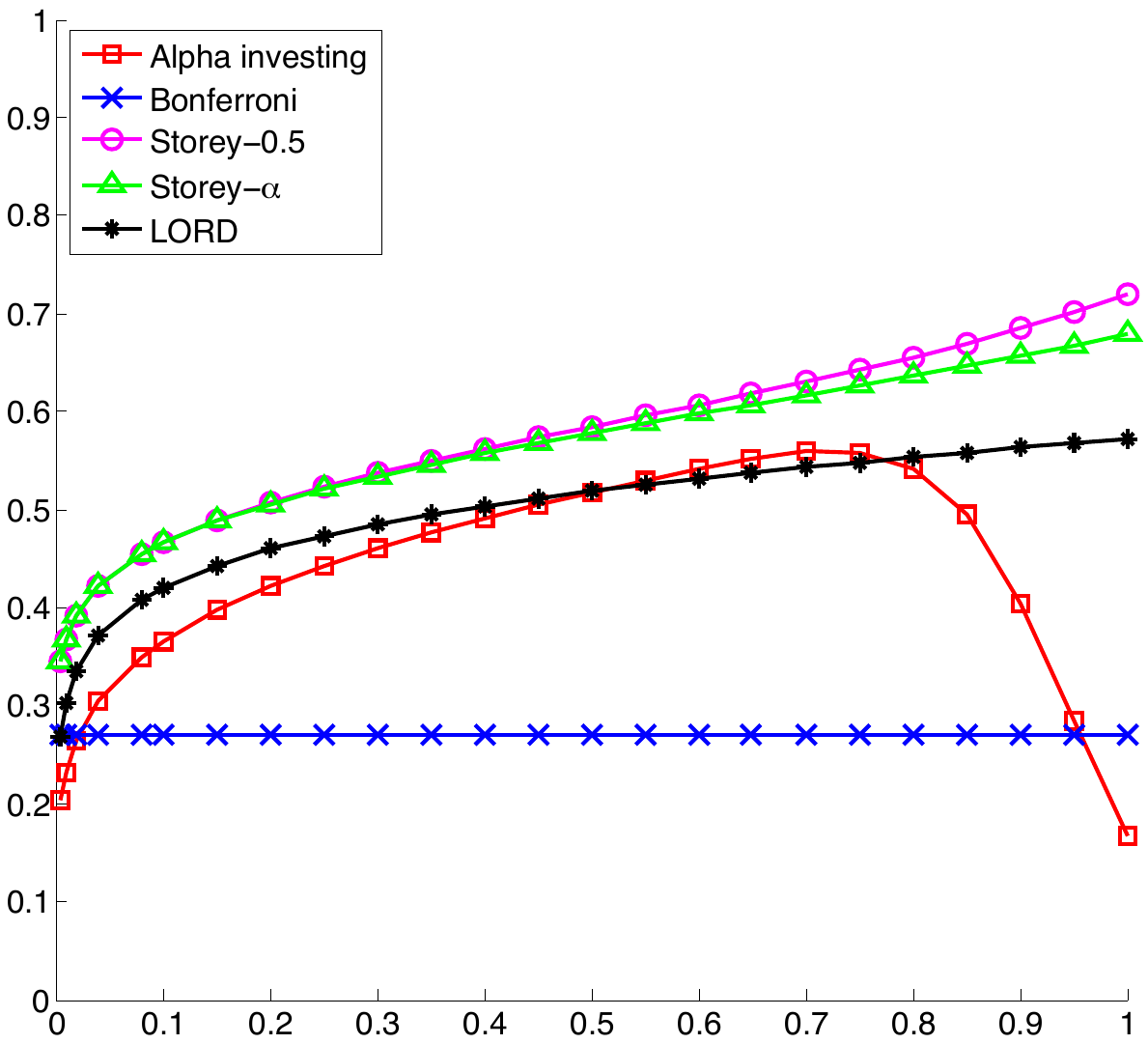}\\
&&\\
&&\\
\includegraphics[width = 6.7cm]{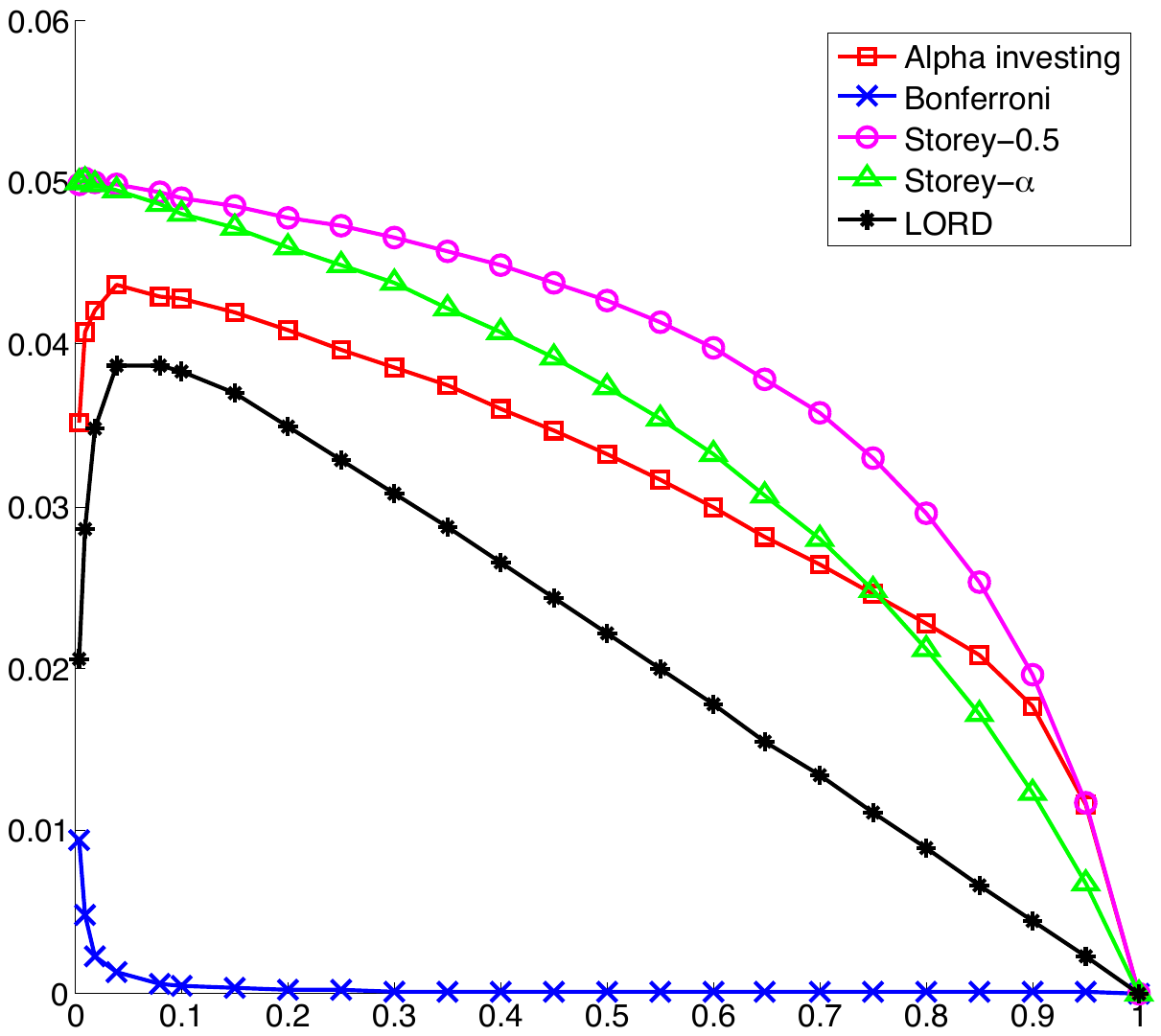}&\phantom{AAAAA}&
\includegraphics[width=6.6cm]{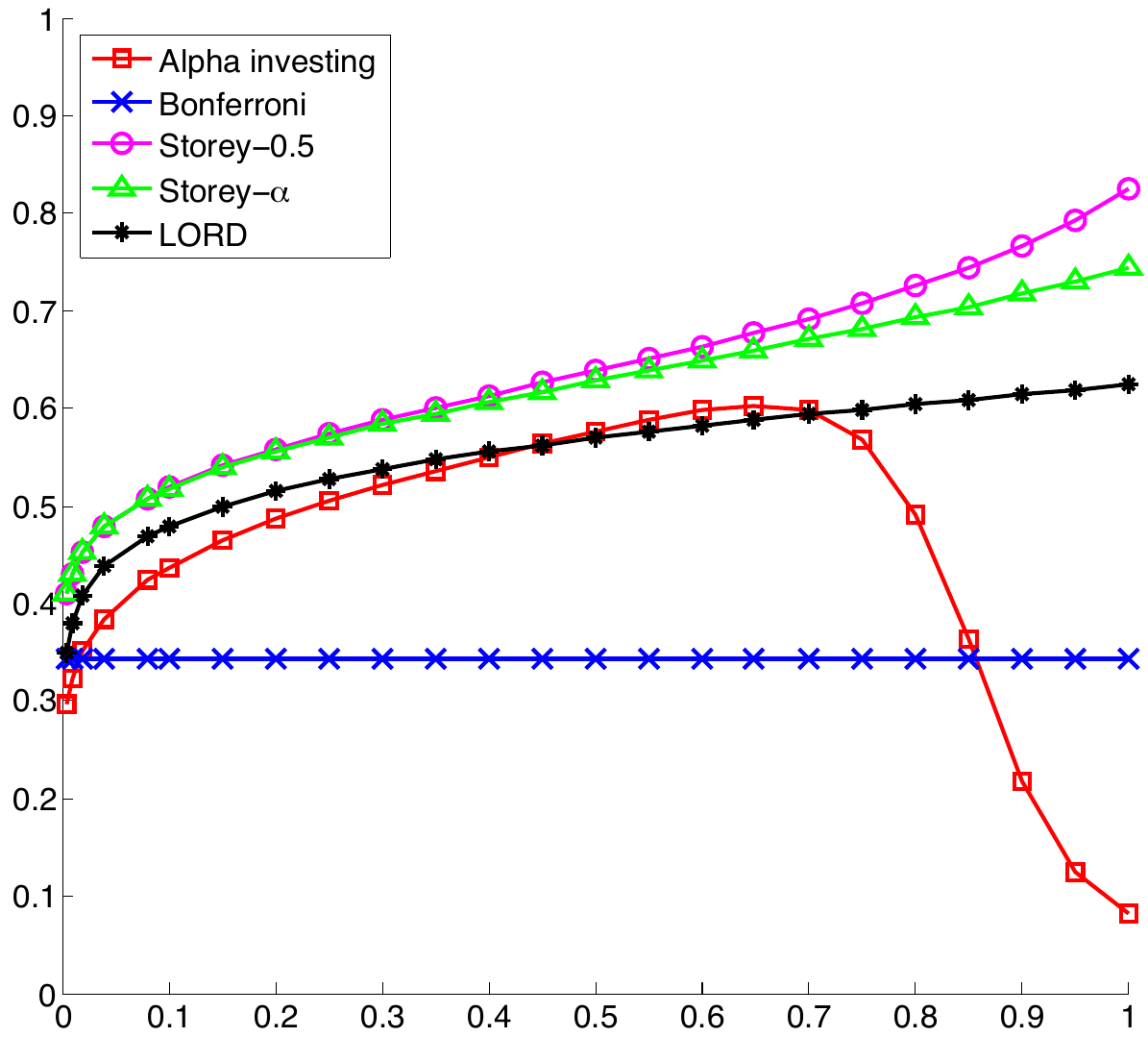}\\
&&\\
&&\\
\includegraphics[width = 6.7cm]{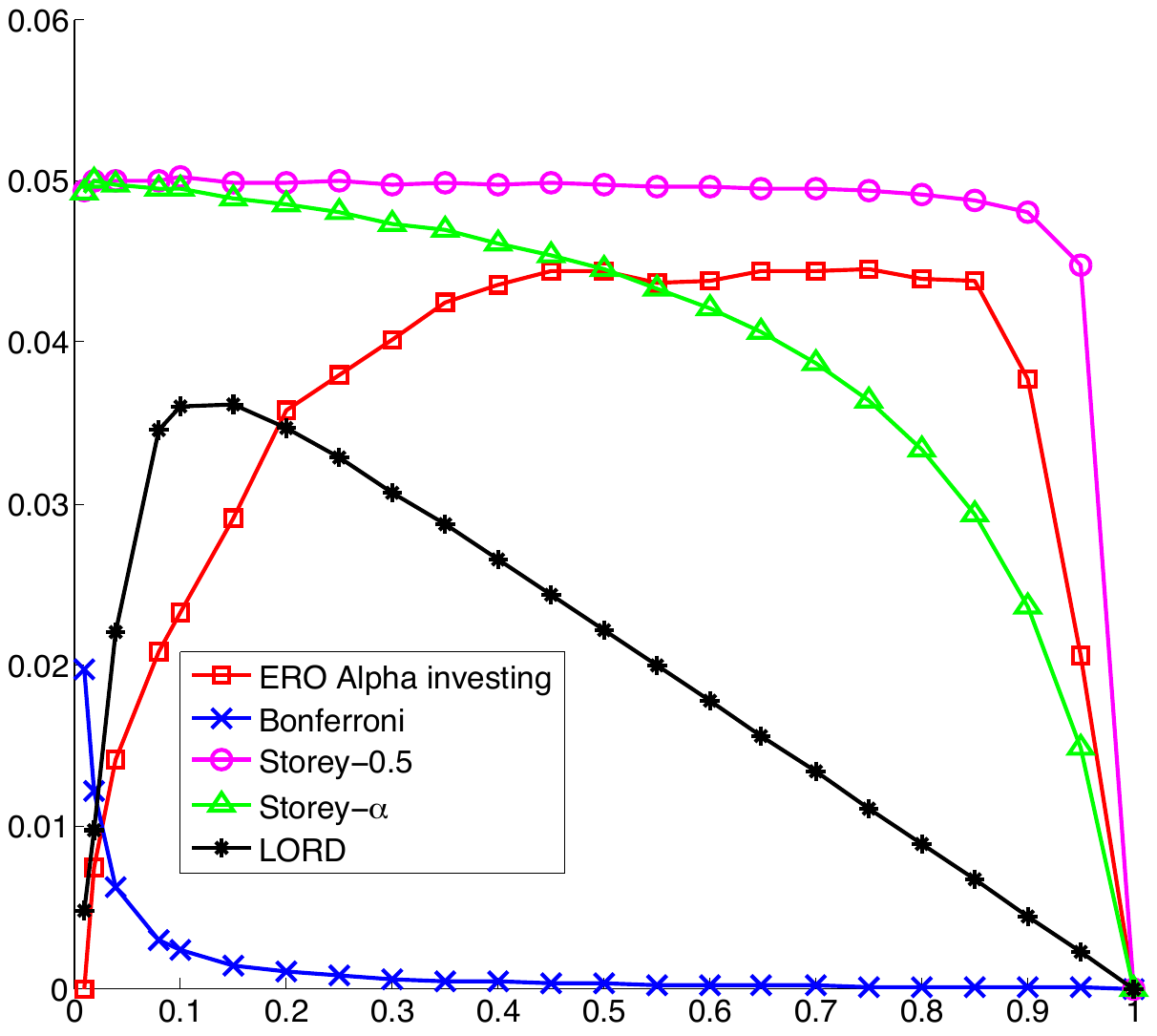}&\phantom{AAAAA}&
\includegraphics[width=6.6cm]{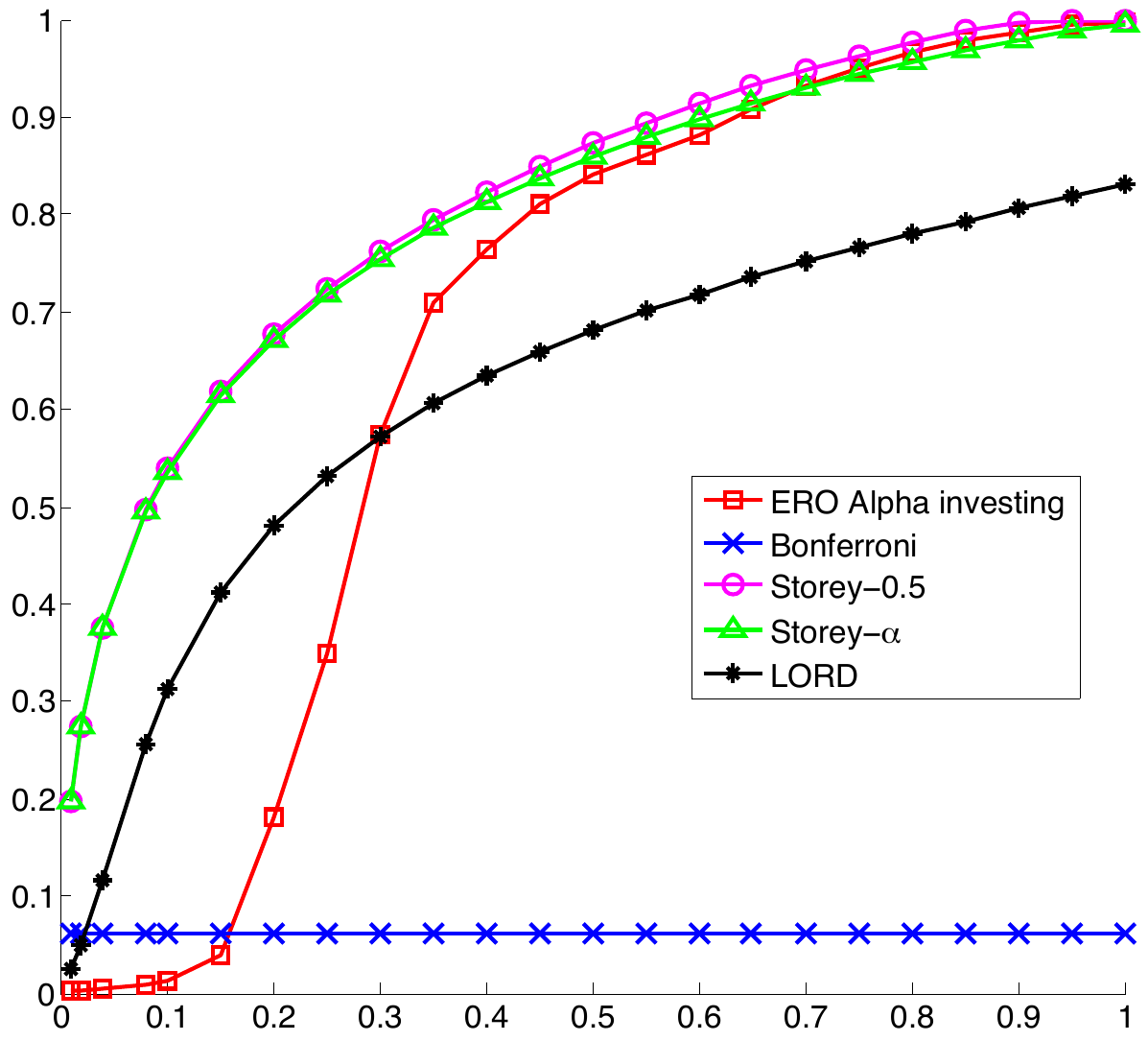}\\
&&\\
&&
\end{tabular}
\put(-355,125){$\pi_1$}
\put(-100,125){$\pi_1$}
\put(-460,215){\rotatebox{90}{{\scriptsize $\FDR$}}}
\put(-208,175){\rotatebox{90}{{\scriptsize Statistical Power}}}
\put(-355,-78){$\pi_1$}
\put(-100,-78){$\pi_1$}
\put(-460,12){\rotatebox{90}{{\scriptsize $\FDR$}}}
\put(-208,-28){\rotatebox{90}{{\scriptsize Statistical Power}}}
\put(-355,-280){$\pi_1$}
\put(-100,-280){$\pi_1$}
\put(-460,-188){\rotatebox{90}{{\scriptsize $\FDR$}}}
\put(-208,-228){\rotatebox{90}{{\scriptsize Statistical Power}}}
\vspace{-0.75cm}

    \caption{{ FDR and statistical power versus fraction of non-null  hypotheses $\pi_1$ for setup described in
      Section~\ref{sec:simulation-syn} with $\alpha = 0.05$. 
The three  rows correspond to Gaussian, exponential, and simple  alternatives (from top to bottom).
FDR and power are computed by averaging over $20,000$ independent trials (for Gaussian and exponential alternatives)
or $500$ trials (for simple alternatives). Here
    hypotheses are considered in random order of arrival. }}\label{fig:FDRPower}
\end{figure}

Figure \ref{fig:FDR-achieved} showcases the FDR achieved by various rules as a function of $\alpha$, for $\pi_1=0.2$
and exponential alternatives. For alpha investing and \LORD we use parameters $b_0=0.9\alpha$ and
$w_0=0.1\alpha$.  The generalized alpha investing rules under consideration have $\FDR$ below the nominal  
$\alpha$, and track it fairly closely. The gap is partly due to the fact that, for large number of discoveries, 
the $\FDR$ of generalized alpha investing rules is closer to $b_0$ than to $\alpha=b_0+w_0$, cf. Remark \ref{remark:BetterFDR}.

\begin{figure}[]
    \centering
        \includegraphics[width = 8cm]{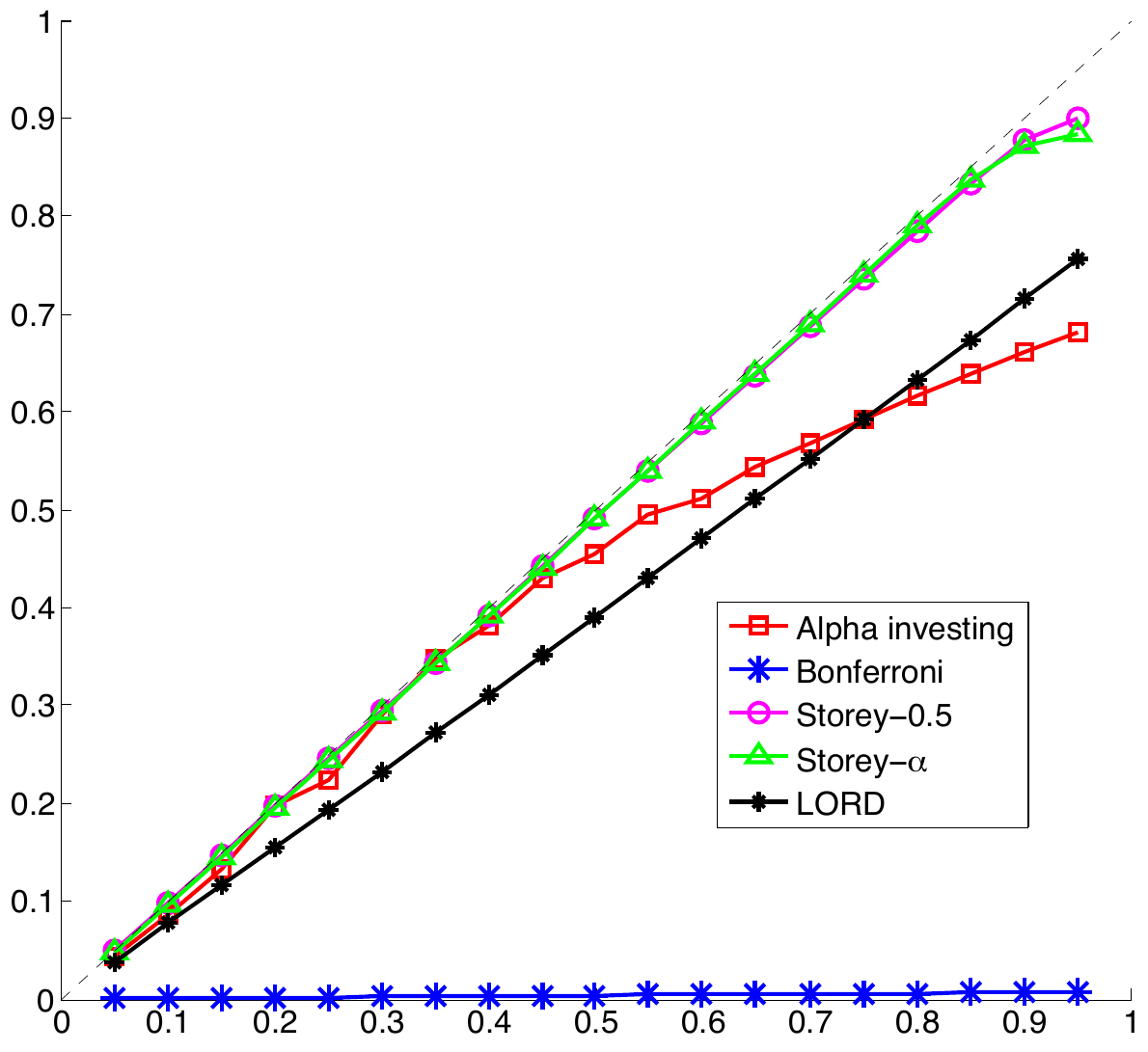}
        \put(-115,-10){$\alpha$}
        \put(-105,-20){\phantom{a}}
         \put(-250,105){{\scriptsize $\FDR$}}
        \label{fig:FDRALL}
    \caption{{ FDR achieved by various methods compared to the target FDR $\alpha$ as $\alpha$ varies. 
Here we use $n=3000$ hypotheses with a proportion $\pi_1 = 0.2$ of non-nulls and exponential alternatives.
The FDR is estimated by averaging over $20,000$ independent trials.}}\label{fig:FDR-achieved}
\end{figure}

\subsection{The effect of ordering}
\label{sec:Ordered}
By definition, the BH rule is insensitive to the order in which the hypotheses are presented. On the contrary, 
the outcome of online testing rules depends on this ordering. This is a weakness, because the ordering of hypotheses
can be adversarial, leading to a loss of  power, but also a strength. Indeed, in some applications, hypotheses can be ordered, using side information,
such that those most \emph{likely} to be rejected come first. 
In these cases, we expect generalized alpha investing
procedures to be potentially \emph{more powerful than benchmark offline rules} as BH.

For instance, Li and Barber \cite{li2016accumulation} analyze a drug-response dataset proceeding in two steps.
First, a family of hypotheses (gene expression levels) are ordered using side information, and then a multiple hypothesis 
testing procedure is applied to the ordered data\footnote{The procedure of \cite{li2016accumulation} is designed
as to reject the first $\hat{k}$ null hypotheses, and accept the remaining $n-\hat{k}$. However, this specific structure  is a design choice,
and is not a constraint arising from the application.}. Other approaches, such as distributing the weights unevenly among the hypotheses~\cite{Genovese06}
are also potentially useful in settings where there is side information about the hypotheses that are more likely to be non-null.

In order to explore the effect of a favorable ordering of the hypotheses, we reconsider the exponential 
model in the previous section, and simulate a case in which side information is available.  
For each trial, we generate the mean $(\theta_j)_{1\le j\le n}$, and two independent sets of
observations $Z_j = \theta_j+\eps_j$, $Z_j' = \theta_j+\eps'_j$, with $\eps_j\sim\normal(0,1)$, $\eps'_j\sim\normal(0,\sigma^2)$
independent.
We then compute the corresponding (one-sided) $p$-values $(p_j)_{1\le j\le n}$, $(p'_j)_{1\le j\le n}$. We use the $p$-values
$(p'_j)_{1\le j\le n}$ to order the hypotheses\footnote{Note that ordering by increasing $p'_j$ is equivalent to ordering by decreasing $|Z'_j|$ and the latter
can be done without knowledge of the noise variance $\sigma^2$. }
 (in such a way that these $p$-values are increasing along the ordering). We then use the
other set of $p$-values $(p_j)_{1\le j\le n}$ to test the null hypotheses $H_{j,0}:\, \theta_j=0$ along this ordering.

Let us emphasize that, for this simulation, better statistical power would be achieved if we computed a single $p$-value $p_j$ by processing jointly $Z_j$ and $Z'_j$. 
However, in real applications, the two sources of  information are heterogenous and this joint processing is not warranted, see  
\cite{li2016accumulation} for a discussion of this point. 

Figure \ref{fig:Ordered} reports the $\FDR$ and statistical power in this setting. We used \LORD with 
parameters $(\gamma_m)_{m\ge 1}$ given by Equation~(\ref{eq:gamma-sim}), and simulated two noise levels for the side information:
$\sigma^2=1$ (noisy ordering information) and $\sigma^2=1/2$ (less noisy ordering).
As expected, with a favorable ordering the $\FDR$ decreases significantly. The statistical power increases as long as the  fraction of 
non-nulls $\pi_1$ is not too large. This is expected: when the fraction of non-nulls is large, ordering is less relevant. 

In particular, for small $\pi_1$, the gain in power can be as large as $20\%$ (for $\sigma^2=1$) and as 
$30\%$ (for $\sigma^2=1/2$). The resulting power is superior to adaptive BH \cite{storey2002direct} for $\pi_1\lesssim 0.15$  
 (for $\sigma^2=1$), or $\pi_1\lesssim 0.25$ (for $\sigma^2=1/2$). 
 
  \begin{figure}
\vspace{-0.5cm}

\begin{tabular}{lll}
\includegraphics[width = 6.7cm]{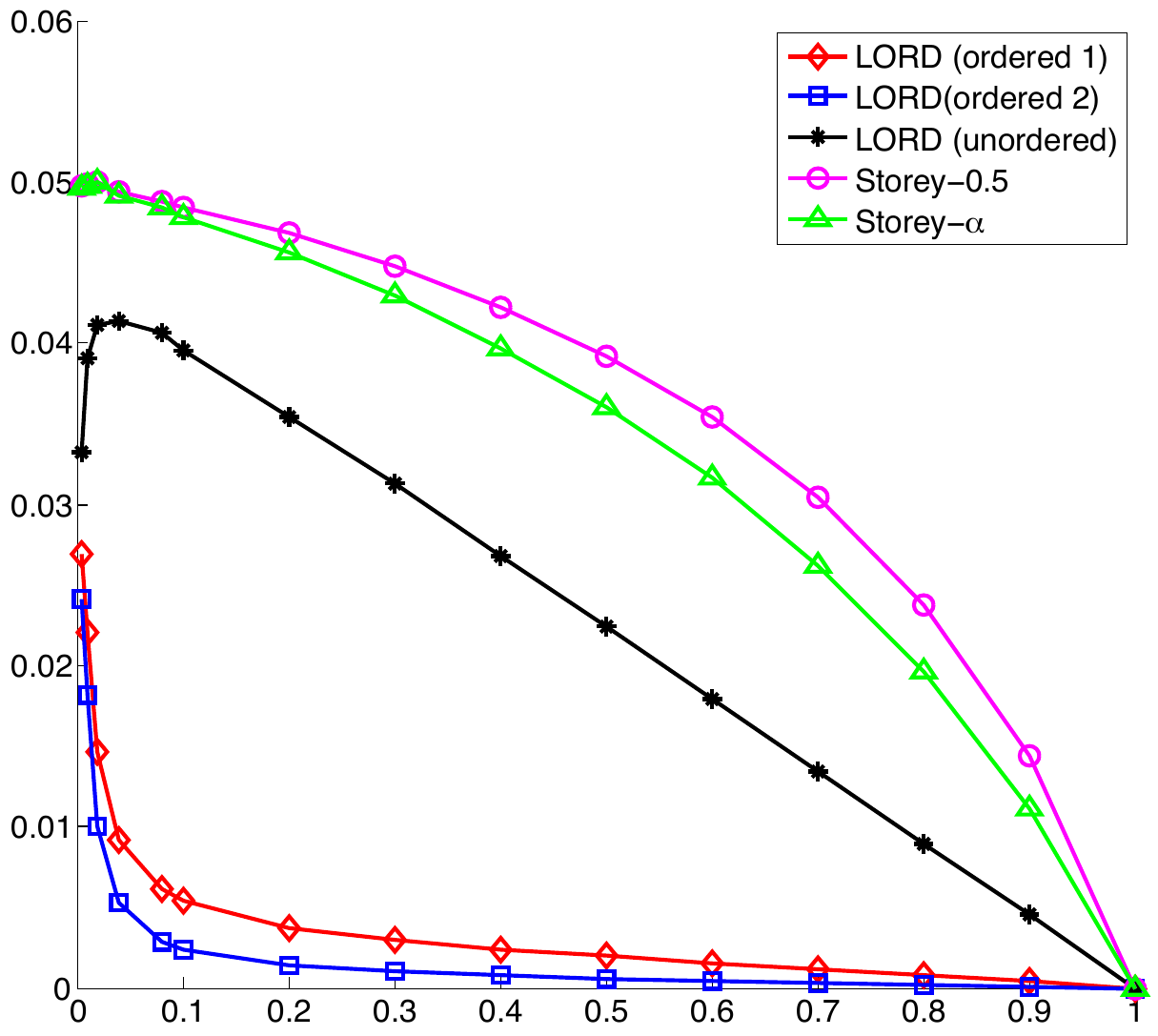}&\phantom{AAAAA}&
\includegraphics[width=6.6cm]{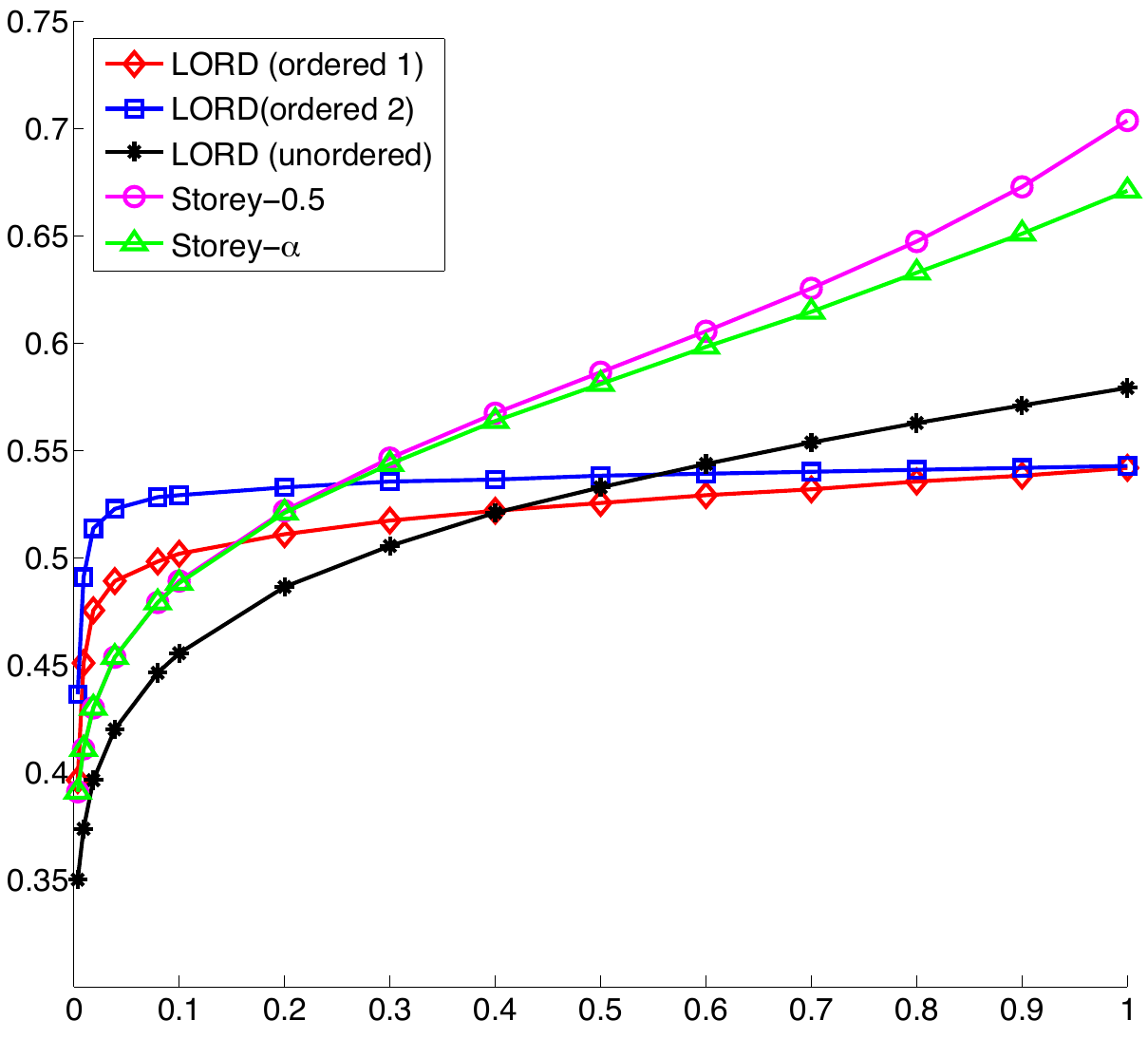}
\end{tabular}
\put(-355,-87){$\pi_1$}
\put(-105,-87){$\pi_1$}
\put(-460,0){\rotatebox{90}{FDR}}
\put(-208,-55){\rotatebox{90}{Statistical Power}}

\caption{{FDR and statistical power for \LORD with favorably ordered hypotheses (setup of Section \ref{sec:Ordered}). Here $n=3000$, $\pi_1=0.2$  and
data are obtained by averaging over $20,000$ trials.
Unordered: Null and non-null hypotheses are ordered at random. Ordered 1: hypotheses are ordered using very noisy side information ($\sigma^2=1$). 
Ordered 2: hypotheses are ordered using less noisy side information ($\sigma^2=1/2$).} }\label{fig:Ordered}
\end{figure}

\subsection{$\FDR$ control versus $\mFDR$ control}

Aharoni and Rosset \cite{generalized-alpha} proved that generalized alpha investing rules control $\mFDR_{w_0/b_0}$. 
Formally,
\begin{align}
\mFDR_{w_0/b_0}(n) \equiv \sup_{\btheta\in\Theta}\frac{\E\, V^{\theta}(n)}{\E\,R(n)+(w_0/b_0)}\le b_0\, .
\end{align}
As mentioned before (see also Appendix~\ref{sec:mfdr_fdr}), this metric has been criticized because 
it does not control a property of the realized sequence
of tests; instead it controls a ratio of expectations. 

Our Theorem~\ref{thm:FDR-control-2} controls a different metric that we called $\sFDR_{\eta}(n)$: 
\begin{align}
\sFDR_{w_0/b_0}(n) \equiv \sup_{\btheta\in\Theta}\E\Big\{\frac{ V^{\theta}(n)}{R(n)+(w_0/b_0)}\Big\}\le b_0\, .
\end{align}
This quantity is the expected ratio, and hence passes the above criticism. Note that both theorems yield control at level $\alpha=b_0$,
for the same class of rules.

Finally, Theorem \ref{thm:FDR-control} controls a more universally accepted metric, namely $\FDR$, at level
$\alpha=w_0+b_0$. A natural question is whether, in practice, we should choose $w_0$, $b_0$ as to guarantee $\FDR$ control
(and hence set $w_0+b_0\le \alpha$) or instead be satisfied with $\mFDR$ and $\sFDR$ control, which allow for $b_0=\alpha$ and hence
potentially larger statistical power. 

While an exhaustive answer to this question is beyond the scope of this paper, we repeated the simulations in
Figure \ref{fig:FDRPower}, using the two different criteria. The results, provided in Appendix~\ref{sec:mfdr_fdr}, suggest that this question 
might not have a simple answer. On one hand, under the setting of Figure \ref{fig:FDRPower} (independent $p$-values, large number of discovery)
$\mFDR$ and $\sFDR$ seem stringent enough criteria. On the other, the gain in statistical power that is obtained from these criteria, rather than $\FDR$, is somewhat marginal. 

%
%
\section{Control of False Discovery Exceedance}

Ideally, we would like to control the proportion of false discoveries in any given
realization of our testing procedures. We recall that this is given by (cf. Equation~(\ref{eq:FDPdef}))
\begin{align}
\FDP^{\theta}(n)\equiv \frac{V^{\theta}(n)}{R(n)\vee 1}\,. 
\end{align}
False discovery rate is  the \emph{expected} proportion of false discoveries.
However --in general-- control of $\FDR$ does not prevent $\FDP$ from varying , even when its average is bounded. 
 In real applications, the actual $\FDP$ might be far from its  expectation.
For instance, as pointed out by \cite{owen2005variance}, the variance of \FDP\, can be
large if the test statistics are correlated.

Motivated by this concern,  the \emph{false discovery exceedance} is defined as
\begin{align}
\FDX_\gamma(n)\equiv \sup_{\btheta\in\Theta}\prob\big(\FDP^\theta(n)\ge \gamma\big)\,.
\end{align}
for a given tolerance parameter $\gamma\ge 0$. Controlling $\FDX$ instead of $\FDR$ gives a stronger preclusion 
from large fractions of false discoveries.

Several methods have been proposed to  control $\FDX$ in an offline setting. Van der Laan, Dudoit and Pollard~\cite{van2004augmentation}
observed that any procedure that controls $\FWER$, if augmented by a
sufficiently small number of rejections, also controls 
$\FDX$.
Genovese and Wasserman~\cite{genovese2006exceedance} suggest
controlling $\FDX$ by inverting a set of uniformity tests on the vector of $p$-values.
Lehmann and Romano~\cite{lehmann2012generalizations} proposed a step-down method to control $\FDX$. 

A natural criterion to impose in the online setting would be the control of $\sup_{n\ge 1}\FDX_{\gamma}(n)$. However, this 
does not preclude the possibility of large proportions of false
discoveries at some (rare) random times $n$.
It could be --as a cartoon example-- that $\FDP^{\theta}(n) = 1/2$ independently
with probability $\alpha$ at each $n$, and $\FDP^{\theta}(n)=
\gamma/2$ with probability $1-\alpha$. In this case $\sup_{n\ge
  1}\FDX_{\gamma}(n)\le \alpha$ but
$\FDP^{\theta}(n)=1/2$ almost surely for infinitely many times
$n$. This is an undesirable situation.  

A more faithful generalization of $\FDX$ to the online setting is therefore
\begin{align}
\FDX_{\gamma} \equiv  \sup_{\btheta\in\Theta}\prob\big(\sup_{n\ge 1}\, \FDP^\theta(n)\ge \gamma\big)\,.
\end{align}
We will next propose a class of generalized alpha investing rules for online control of $\FDX_{\gamma}$.

\subsection{The effect of reducing  test levels}
\label{sec:simulationT}

Before describing our approach, we demonstrate through an example that 
the $\FDP$ can differ substantially from its expectation. We also  want to illustrate how a naive modification of
the previous rules only achieves a better control of this variability at the price of a significant loss in power.

Note that the desired bound $\FDP^{\theta}(n) <\gamma$ follows if we can establish \mbox{$b_0R(n) -V(n) + (\gamma-b_0)>0$}
for some $\gamma\ge b_0 \ge 0$.
Recall that a generalized alpha investing procedure continues until the potential  $W(n)$ remains non-negative. Therefore, 
for such a procedure, it suffices to bound the probability that the stochastic process 
\mbox{$B(n)\equiv b_0 R(n) - W(n)-V(n)+ (\gamma -b_0)$} crosses zero. As we show in Lemma~\ref{subM2}, $B(n)$ 
is a submartingale, and thus in expectation it moves away from  
zero. In order to bound the deviations from the  expectation, consider the submartingale increments 
$B_j \equiv B(j)-B(j-1)$ given by 
\begin{align}
B_j = (b_0 -\psi_j)R_j +\varphi_j - V_j\,.
\end{align}
If the $j$-th null hypothesis is false, i.e.  $\theta_j \neq 0$, we have $V_j = 0$ and $B_j \ge 0$ by invoking assumption \AssI and noting that $R_j\in\{0,1\}$.
Under the null hypothesis, $V_j = R_j$, and 
\begin{align}\label{eq:subMvar}
{\Var}(B_j|\cF_{j-1}) = (b_0 -\psi_j-1)^2 \alpha_j (1-\alpha_j) \,.
\end{align}
Reducing ${\Var}(B_j|\cF_{j-1})$ lowers variations of the submartingale and hence the variation of the
false discovery proportions.  Note that for a generalized alpha investing rule, if we keep $b_0$, $\psi_j$  
unchanged and lower the test levels $\alpha_j$, the rule still satisfies conditions \AssI, \AssII and thus controls $\FDR$ at the desired level. On the other hand, this modification decreases ${\Var}(B_j|\cF_{j-1})$ as per Eq~\eqref{eq:subMvar}. 
In summary, reducing the test levels has the effect of reducing the 
variation of false discovery proportion at the expense of reducing statistical power.

We carry out a numerical experiment within a similar setup as the one discussed in Section~\ref{sec:simulation-syn}.
A   set of
$n$ hypotheses are tested, each specifying mean of a normal distribution, $H_j:\; \theta_j = 0$. 
The test statistics are independent, normally distributed random variables $Z_j \sim \normal(\theta_j,1)$.
For non-null hypotheses, we set $\theta_j = 3$. The total number of tests is $n = 1000$ of which the first 
$100$ are non-null.  

We consider three different testing rules, namely alpha investing, alpha spending with rewards and $\LORD$,
all ensuring $\FDR$ control at level  $\alpha = 0.05$. The details 
of these rules as well as the choice of parameters is the same as Section~\ref{sec:simulation-syn}. 

In order to study the effect of reducing test levels, for each of these rules we truncate them by a threshold value $T$, i.e.
we use $\alpha_j^T = \alpha_j \vee T$.
We plot the histogram of false discovery proportions using $30,000$ replications of the test statistics sequence. We further report standard deviation and $0.95$ quantile of FDPs. 
The results are shown in Figs.~\ref{fig:AIT}, \ref{fig:RewT}, \ref{fig:LORDT}. 

As a first remark, while all of the rules considered control 
FDR below $\alpha=0.05$, the actual false discovery proportion in Figs.~\ref{fig:AIT}, \ref{fig:RewT}, \ref{fig:LORDT}
has a very broad distribution. Consider for instance alpha investing, at threshold level $T=0.9$. Then
FDP exceeds $0.15$ (three times the nominal value) with probability 0.13.

Next we notice that reducing the test levels (by reducing $T$) has the desired effect of
reducing the variance of the FDP. This effect is more pronounced for alpha investing.  
Nevertheless quantifying this effect is challenging due to the complex dependence between $B_j$ and history 
$\cF_{j-1}$. This makes it highly nontrivial to adjust threshold $T$ to obtain $\FDX_\gamma\le \alpha$.
In the next section we achieve this through a different approach.
\begin{figure}[p]
    \centering
    \subfigure[$T = 0.9$]{
        \includegraphics[width = 4.9cm]{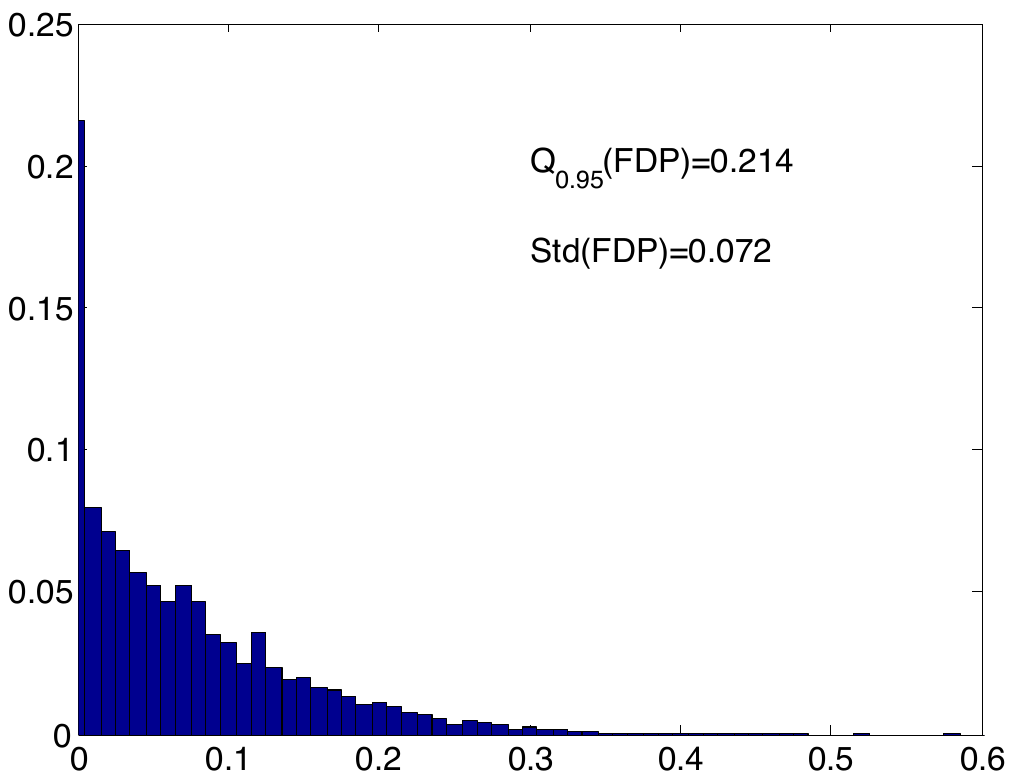}
        \put(-75,-10){{\scriptsize $\FDP$}}
        \put(-14,-15){\phantom{b}}
        \put(-150,40){\rotatebox{90}{{\scriptsize frequency}}}   
        \label{fig:AIT09}
        }
    \subfigure[$T = 0.6$]{
        \includegraphics[width=5cm]{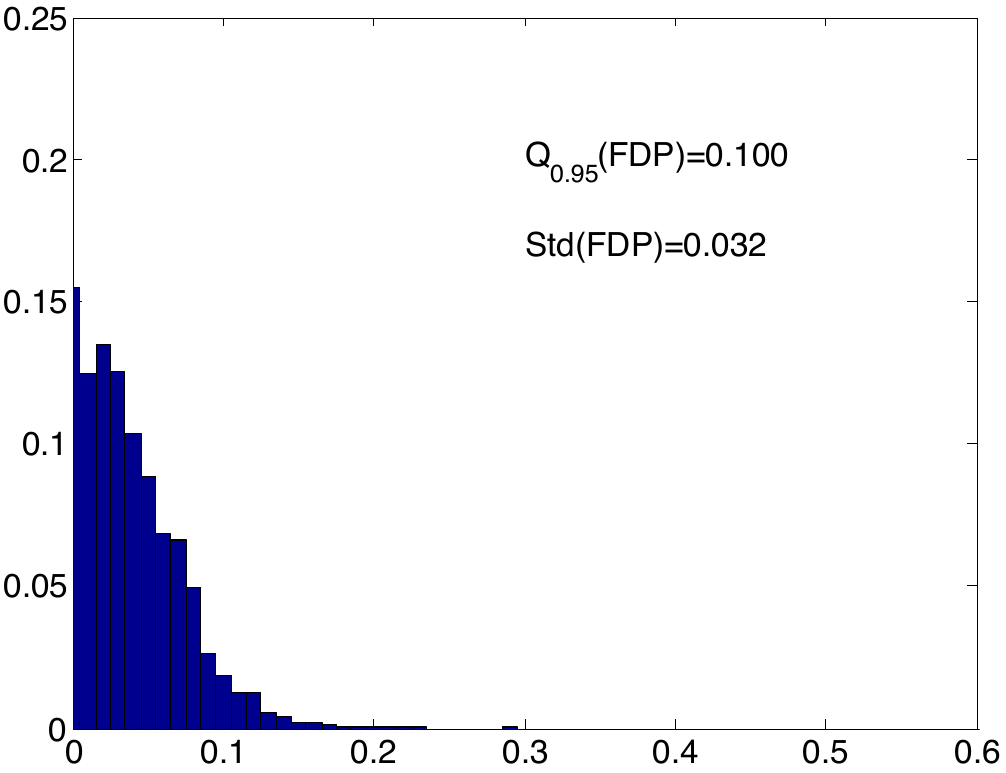}
        \put(-75,-10){{\scriptsize $\FDP$}}
        \put(-14,-15){\phantom{b}}
        \put(-150,40){\rotatebox{90}{{\scriptsize frequency}}}   
        \label{fig:AIT07}
        }
     \subfigure[$T = 0.3$]{
        \includegraphics[width=5cm]{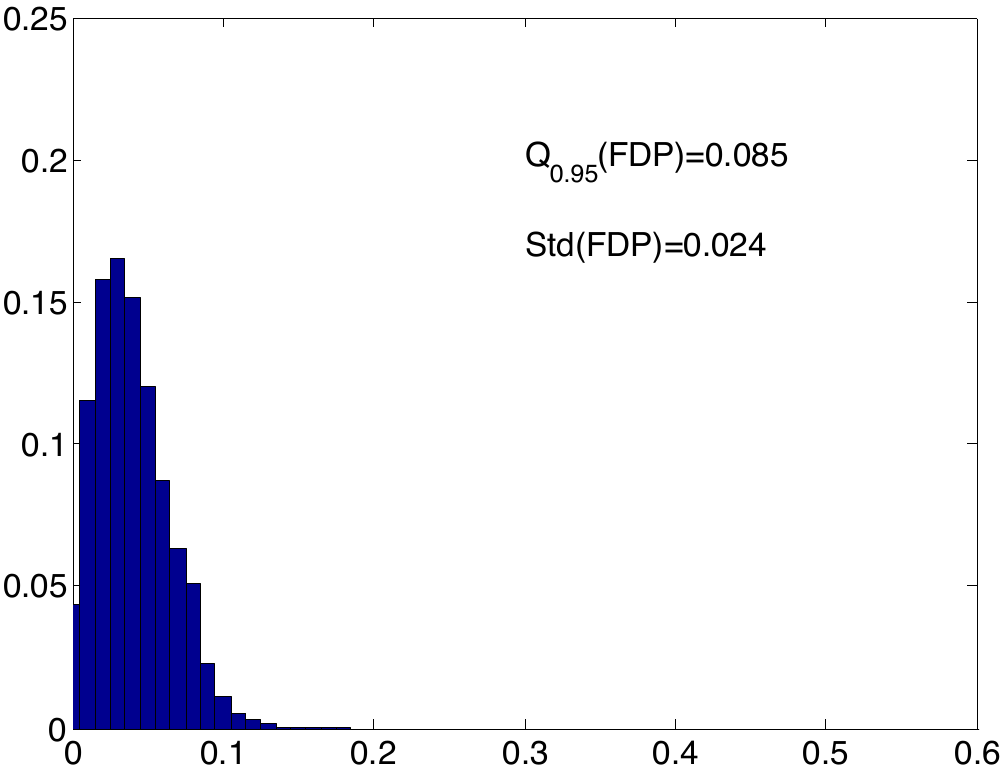}
        \put(-75,-10){{\scriptsize $\FDP$}}
        \put(-14,-15){\phantom{b}}
        \put(-150,40){\rotatebox{90}{{\scriptsize frequency}}}   
        \label{fig:AIT05}
        }
    \caption{{Histogram of FDP for alpha investing rule with different values of $T$}}\label{fig:AIT}
\end{figure} 
\begin{figure}[p]
    \centering
    \subfigure[$T = 0.9$]{
        \includegraphics[width = 5cm]{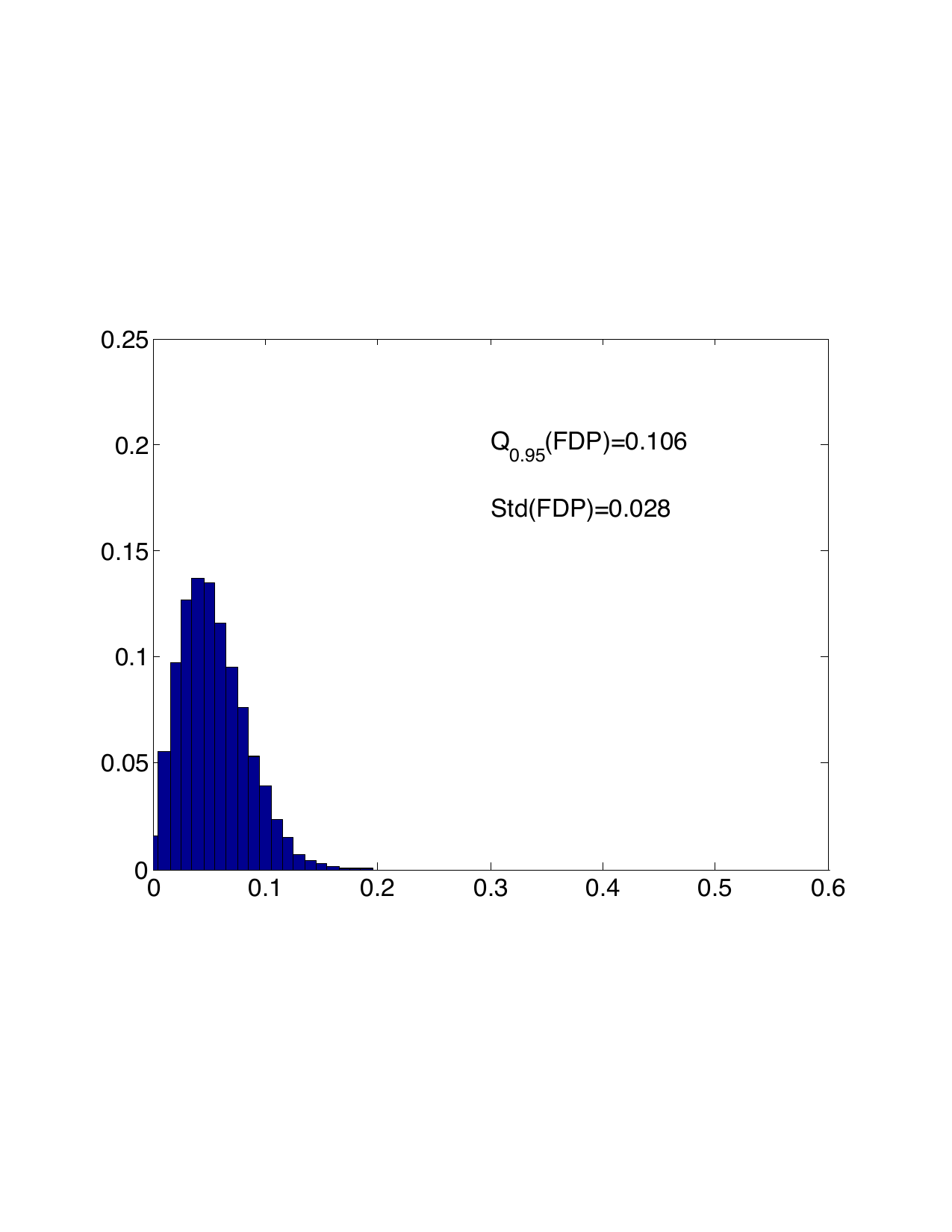}
        \put(-75,-10){{\scriptsize $\FDP$}}
        \put(-14,-15){\phantom{b}}
        \put(-150,40){\rotatebox{90}{{\scriptsize frequency}}}   
        \label{fig:RewT09}
        }
    \subfigure[$T = 0.6$]{
        \includegraphics[width=5cm]{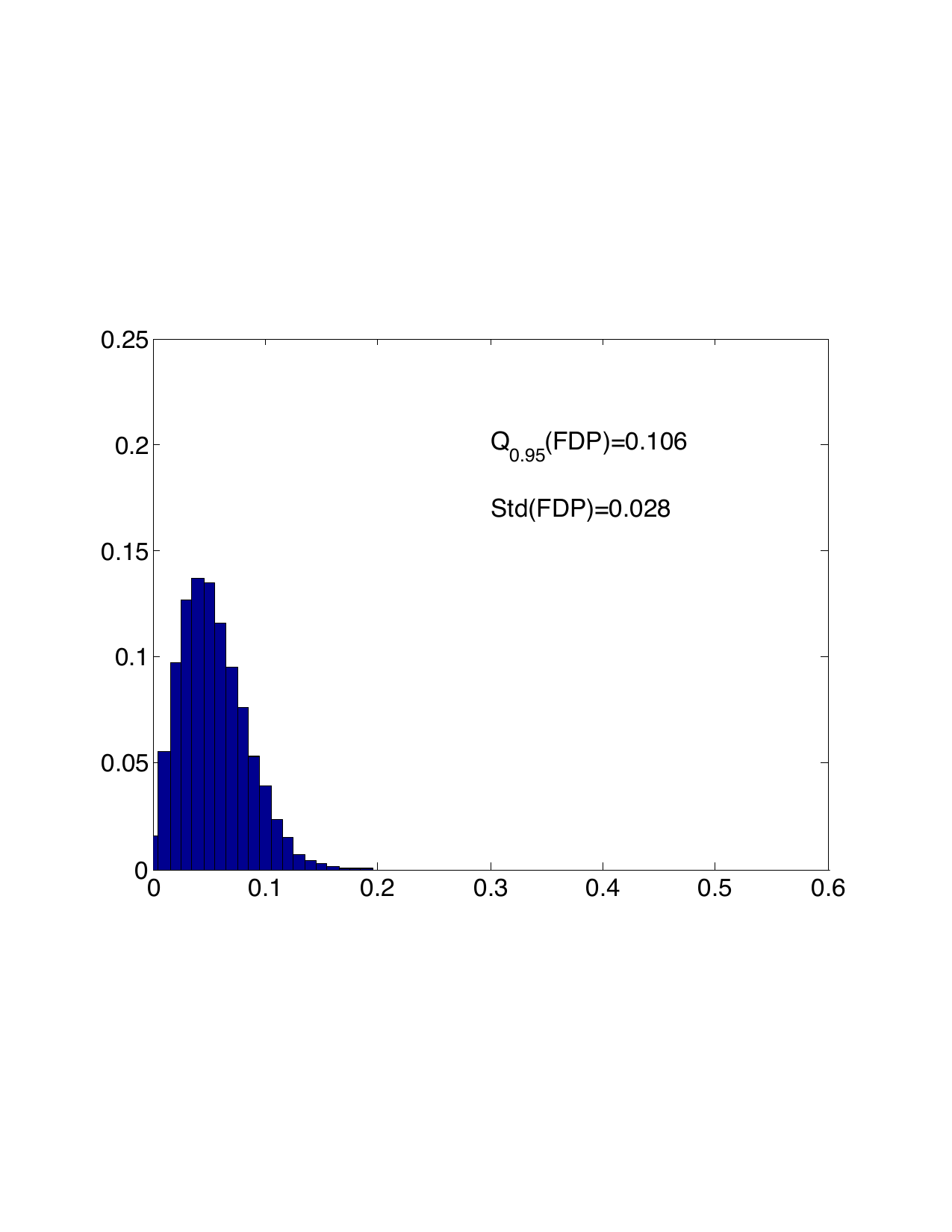}
        \put(-75,-10){{\scriptsize $\FDP$}}
        \put(-14,-15){\phantom{b}}
        \put(-150,40){\rotatebox{90}{{\scriptsize frequency}}}   
        \label{fig:RewT07}
        }
     \subfigure[$T = 0.3$]{
        \includegraphics[width=5cm]{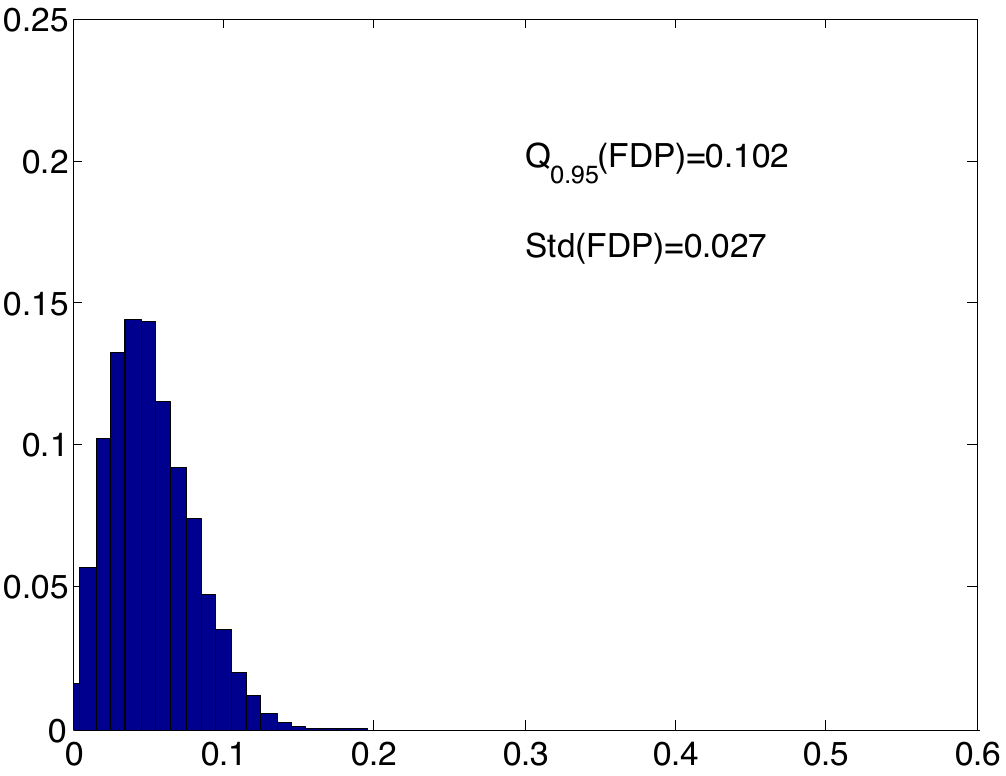}
        \put(-75,-10){{\scriptsize $\FDP$}}
        \put(-14,-15){\phantom{b}}
        \put(-150,40){\rotatebox{90}{{\scriptsize frequency}}}   
        \label{fig:RewT05}
        }
    \caption{{ Histogram of FDP for alpha spending with rewards for different values of $T$}}\label{fig:RewT}
\end{figure} 
\begin{figure}[p]
    \centering
    \subfigure[$T = 0.9$]{
        \includegraphics[width = 5cm]{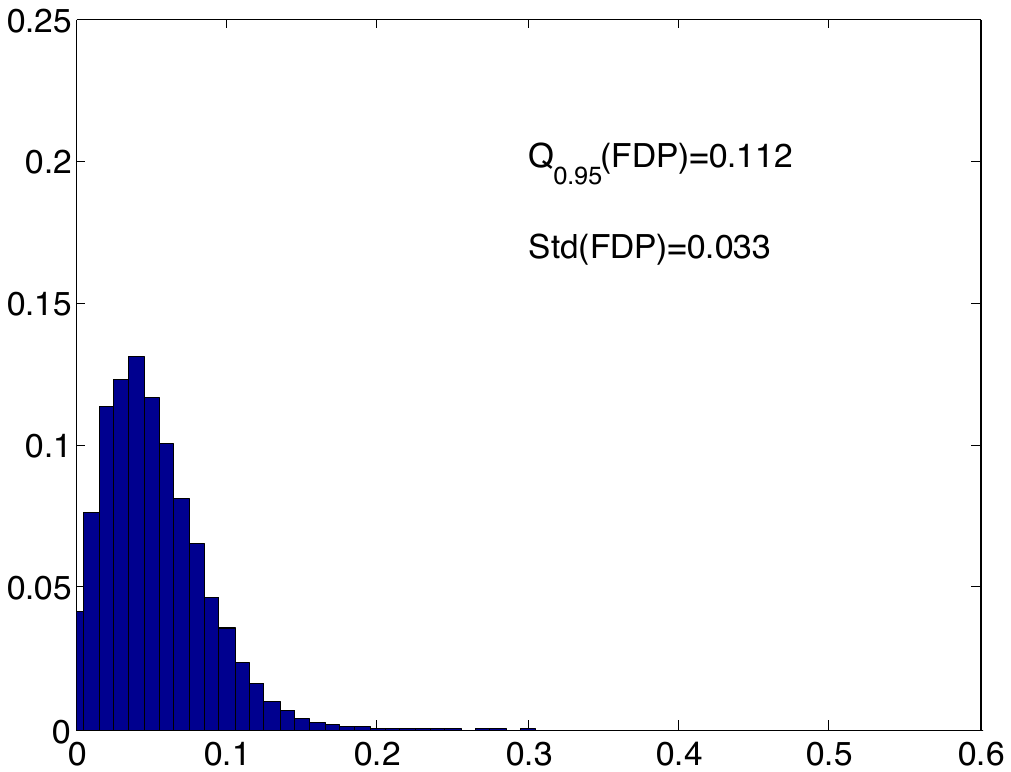}
        \put(-75,-10){{\scriptsize $\FDP$}}
        \put(-14,-15){\phantom{b}}
        \put(-150,40){\rotatebox{90}{{\scriptsize frequency}}}   
        \label{fig:LORD09}
        }
    \subfigure[$T = 0.6$]{
        \includegraphics[width=5cm]{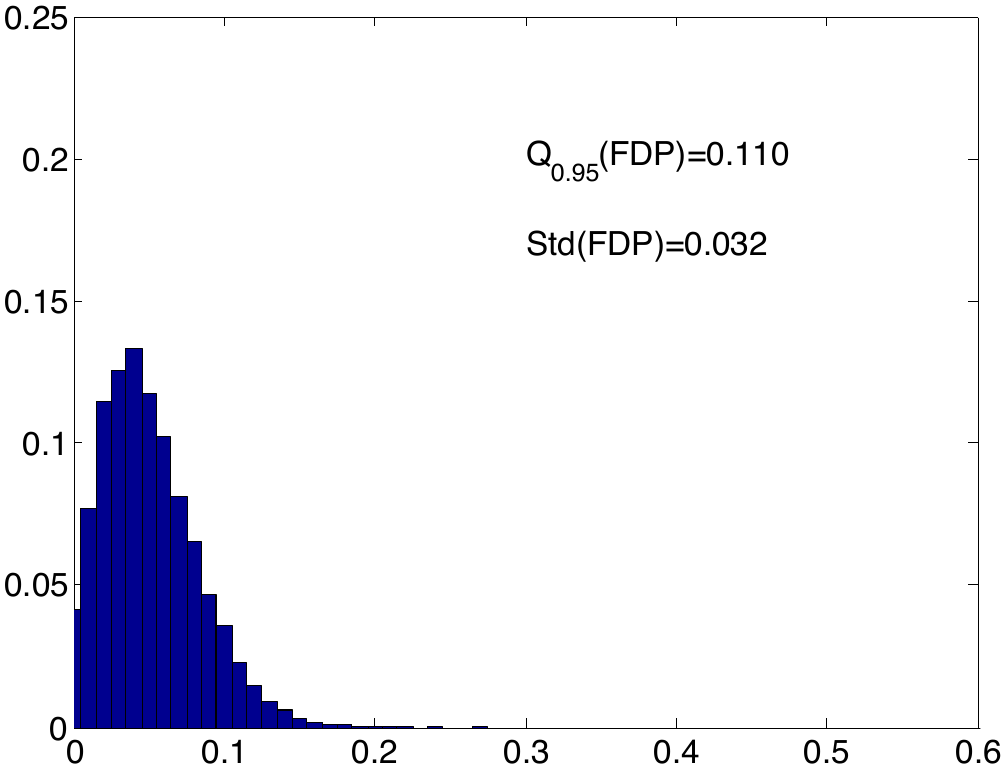}
        \put(-75,-10){{\scriptsize $\FDP$}}
        \put(-14,-15){\phantom{b}}
        \put(-150,40){\rotatebox{90}{{\scriptsize frequency}}}   
        \label{fig:LORD07}
        }
     \subfigure[$T = 0.3$]{
        \includegraphics[width=5cm]{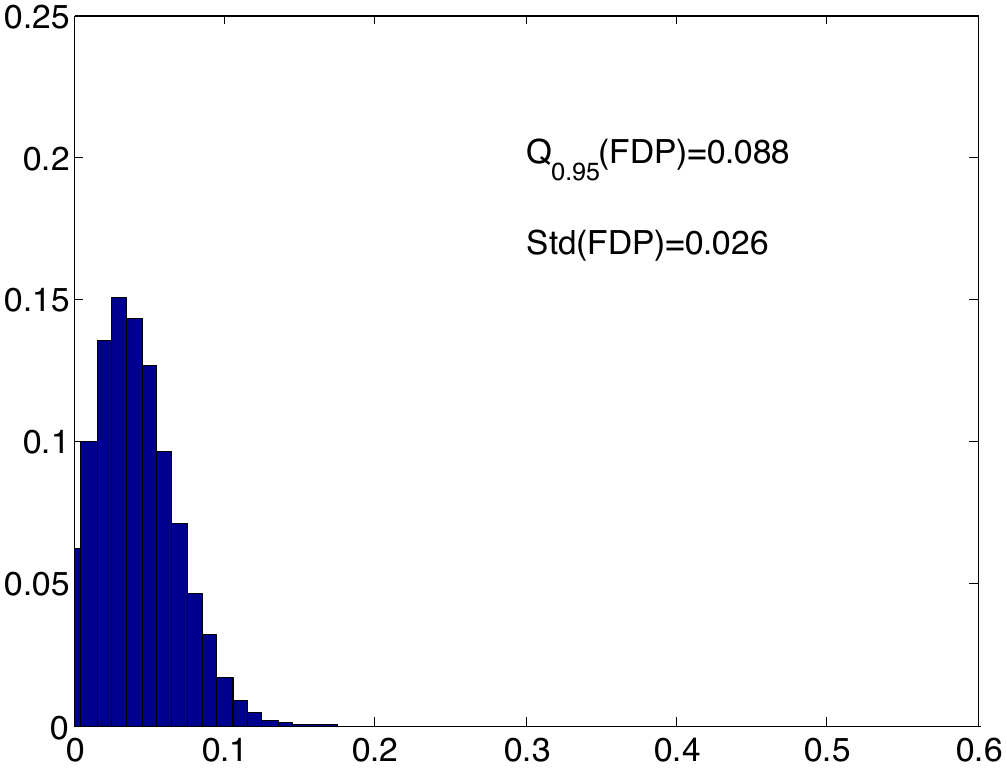}
        \put(-75,-10){{\scriptsize $\FDP$}}
        \put(-14,-15){\phantom{b}}
        \put(-150,40){\rotatebox{90}{{\scriptsize frequency}}}   
        \label{fig:LORD05}
        }
    \caption{{ Histogram of FDP for LORD rule with different values of $T$}}\label{fig:LORDT}
\end{figure} 

\subsection{Rules for controlling $\FDX_\gamma$}

Let $M(0) = \gamma - b_0-w_0 >0 $ and define, for $n\in\naturals$, $M(n) =  M(0) + \sum_{j=1}^n M_j$, where
\begin{align}\label{eq:Mj}
M_j \equiv \max\{(1+\psi_j-b_0) (\alpha_j - R_j), (b_0-\psi_j) R_j,\psi_j-b_0\}\,.
\end{align}
Note that $M(n)$ is a function of $(R_1,\dots,R_n)$,
i.e. it is measurable on $\cF_n$. We then require the following conditions in addition to 
\AssI and \AssII introduced in Section \ref{sec:DefGen}:

\noindent {\sf G3.} $w_0<\gamma - b_0$.

\noindent{\sf G4.} For $j\in\naturals$ and all $\bR_1^{j}\in\{0,1\}^{j}$, if 
\begin{align}\label{eq:FDXcondition}
M(j)+\xi_{j+1} > \frac{\gamma-b_0-w_0}{1-\alpha}\,,
\end{align}
then $\alpha_i = 0$ for all $i> j$, where we define $\xi_j \equiv \max\{(1+\psi_j-b_0)\alpha_j,|b_0-\psi_j|\}$.

Condition {\sf G4} is well posed since $M(j)$ and $\xi_{j+1}$
are functions of $\bR_1^{j}$.

Note that any generalized alpha investing rule can be modified as to
satisfy these conditions. Specifically, the rule keeps track of LHS of~\eqref{eq:FDXcondition} (it is an \emph{observable} quantity) and whenever inequality~\eqref{eq:FDXcondition} is violated, the test levels are set to zero onwards, i.e, $\alpha_i = 0$ for $i\ge j$. The sequence $(\xi_{j})_{j\in \naturals}$ is constructed in a way to be a predictable process that bounds $M_{j}$. Consequently,  $M(j)+\xi_{j+1}\in \cF_j$ bounds $M(j+1)$.

The decrement and increment values $\varphi_j$ and $\psi_j$ are determined in way to satisfy conditions ${\sf G2}$ and ${\sf G5}$.

We then establish $\FDX$ control under a certain negative dependency
condition on the test statistics. 
\begin{thm}\label{thm:FDX}
Assume that the $p$-values $(p_i)_{i\in\naturals}$ are such that, for each $j\in\naturals$,
and all $\btheta \in H_j$ (i.e. all $\btheta$ such that the null hypothesis $\theta_j=0$ holds), we have
\begin{align}
\prob_{\btheta}(p_j\le \alpha_j|\cF_{j-1})\le \alpha_j\, ,\label{eq:NegativeDependence}
\end{align}
almost surely.

Then, any generalized alpha investing rule that satisfies conditions
\AssIII, \AssIV above (together with \AssI and \AssII)
controls the false discovery exceedance:
\begin{align}
\FDX_{\gamma}\le \alpha\, .
\end{align}
\end{thm}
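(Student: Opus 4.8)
The plan is to build a nonnegative supermartingale out of the potential $W$ and the quantity $M$, and then apply a maximal inequality (Ville's inequality) together with the observation that, on the event $\{\sup_n \FDP^\theta(n)\ge \gamma\}$, this supermartingale must have reached a large value. First I would recall that $W(n)$ is a nonnegative $\cF_n$-measurable process (by~\eqref{eq:Nonneg} and the comments after \AssII), and that the testing procedure is effectively stopped once $W$ hits $0$ or once~\eqref{eq:FDXcondition} is violated. Define the auxiliary process $B(n)\equiv b_0 R(n)-W(n)-V^\theta(n)+(\gamma-b_0)$ as in the excerpt, so that $B(0)=\gamma-b_0-w_0>0$ by \AssIII. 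Since $\FDP^\theta(n)=V^\theta(n)/(R(n)\vee 1)$, the bound $B(n)>0$ together with $W(n)\ge 0$ forces $b_0 R(n)-V^\theta(n)+(\gamma-b_0)>0$, i.e. $V^\theta(n)<b_0 R(n)+(\gamma-b_0)\le \gamma(R(n)\vee 1)$ (using $b_0\le\gamma$ and handling $R(n)=0$ separately), hence $\FDP^\theta(n)<\gamma$. So it suffices to show $\prob(\inf_n B(n)\le 0)\le\alpha$.

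The key step is to control $B(n)$ probabilistically. The increments are $B_j=(b_0-\psi_j)R_j+\varphi_j-V_j$, and under the negative dependence hypothesis~\eqref{eq:NegativeDependence} one checks that $\E[B_j\mid\cF_{j-1}]\ge 0$ when $H_j$ is null (using $\varphi_j\ge\alpha_j\psi_j-\alpha_j(b_0-1)$, which is~\eqref{eq:A1b} rearranged, plus $\prob(p_j\le\alpha_j\mid\cF_{j-1})\le\alpha_j$) and $B_j\ge 0$ deterministically when $H_j$ is non-null (by~\eqref{eq:A1a}). Thus $B(n)$ is a submartingale, as asserted in Lemma~\ref{subM2}. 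A submartingale alone does not give a one-sided maximal bound on the infimum, so the real work is to show that the \emph{compensator}, i.e. the total downward fluctuation, is controlled by $M(n)$: concretely one argues that $B(n)+M(n)$ (or a suitable variant, after subtracting the predictable mean) is a supermartingale, or equivalently that $M_j$ as defined in~\eqref{eq:Mj} dominates the conditional ``downside'' of $B_j$ so that $B(n)-B(0)\ge -(M(n)-M(0))$ cannot fail without the process $B$ having positive drift elsewhere. The definition of $M_j$ is reverse-engineered precisely so that $|B_j-\E[B_j\mid\cF_{j-1}]|$ and the negative part of $B_j$ are absorbed: $\{(1+\psi_j-b_0)(\alpha_j-R_j),(b_0-\psi_j)R_j,\psi_j-b_0\}$ are exactly the extreme values the centered increment can take in the null and non-null cases. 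I would make this precise by decomposing $B(n)=B(0)+A(n)+N(n)$ with $A(n)$ the predictable increasing part and $N(n)$ a martingale, and showing $|N(n)|\le M(n)-M(0)$ together with $A(n)\ge 0$, whence $B(n)\ge B(0)-(M(n)-M(0))$ pathwise. Then $\{\inf_n B(n)\le 0\}\subseteq\{\sup_n M(n)\ge B(0)+M(0)=\gamma-b_0-w_0\}$... but this would be a probability-one-type bound, too weak; so instead one keeps the martingale $N(n)$ and applies Ville's inequality to the \emph{nonnegative} supermartingale obtained by exponentiating or by the stopping construction, using that \AssIV forces the procedure to halt before $M(j)+\xi_{j+1}$ exceeds $(\gamma-b_0-w_0)/(1-\alpha)$, so $M$ never overshoots that barrier by much.

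More carefully: let $\tau$ be the first time~\eqref{eq:FDXcondition} is violated; after $\tau$ all $\alpha_i=0$, hence $R_i=V_i=0$ and $\varphi_i=0$, so $B(n)=B(\tau)$ and $M(n)=M(\tau)$ are frozen. On $\{n<\tau\}$ we have $M(n)\le (\gamma-b_0-w_0)/(1-\alpha)$ by the stopping rule and the predictable bound $\xi_{j+1}\ge M_{j+1}$; at the stopping time the overshoot is at most $\xi_{\tau}$, and one verifies $M(\tau)\le (\gamma-b_0-w_0)/(1-\alpha)+\xi_\tau$ is still dominated appropriately. I would then consider the martingale $N(n)=B(n)-B(0)-A(n)$ and apply a maximal inequality: since $|N_j|\le 2M_j$ (say) and the total is bounded, $\prob(\inf_n N(n)\le -(\gamma-b_0-w_0))$ — actually the clean route is to note $\{\inf_n B(n)\le 0\}\subseteq\{\inf_n N(n)\le -(\gamma-b_0-w_0)\}$ and that $-N(n)$ is a martingale started at $0$ with increments bounded in terms of $M_j$, then use the Dubins–Schwarz / optional-stopping argument: $-N(n\wedge\sigma)$ where $\sigma$ is the first time $-N$ hits level $\gamma-b_0-w_0$ is a bounded martingale, so $\prob(\sigma<\infty)\cdot(\gamma-b_0-w_0)\le \E[(-N)_\infty^+]$, and the $1/(1-\alpha)$ slack in \AssIV together with $w_0+b_0$ accounting is exactly what makes the right-hand side equal $\alpha(\gamma-b_0-w_0)$. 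The main obstacle, and the place I expect the argument to be most delicate, is precisely this last bookkeeping: choosing the right stopping time and the right maximal inequality so that the constants coming from $M(0)=\gamma-b_0-w_0$, the factor $1/(1-\alpha)$ in~\eqref{eq:FDXcondition}, and the worst-case overshoot $\xi_\tau$ combine to give exactly $\FDX_\gamma\le\alpha$ rather than some weaker constant; the probabilistic backbone (submartingale $B$, Ville's inequality) is standard once that is set up.
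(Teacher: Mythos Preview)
Your approach matches the paper's almost step for step: define $B(n)$, prove it is a submartingale under~\eqref{eq:NegativeDependence} (this is Lemma~\ref{subM2}), take the Doob decomposition $B(n)=\tM(n)+A(n)$ with $A$ predictable increasing, show $\{\sup_n\FDP^\theta(n)\ge\gamma\}\subseteq\{\exists\, n:\tM(n)\le 0\}$ via $A(n)\ge 0$ and $W(n)\ge 0$, and control the martingale part through the increment-wise inequality $\tM_j\le M_j$ (only this one-sided bound is used, not $|\tM_j|\le M_j$ as you suggest), which together with \AssIV yields $\tM(n)\le M(n)\le u\equiv(\gamma-b_0-w_0)/(1-\alpha)$ for all $n$ up to the stopping time. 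For the final step you were unsure about, the paper does not use Ville's inequality or exponentiation but simply sets $Q(n)=\max\big(0,\tM(n\wedge N_*)\big)$ with $N_*=\inf\{n:\tM(n)\le 0\}$; this is a submartingale with $Q(0)=\gamma-b_0-w_0$ and $0\le Q(n)\le u$, so $\gamma-b_0-w_0\le\E\, Q(n)\le\big(1-\prob(N_*\le n)\big)\,u$, which rearranges directly to $\prob(N_*\le n)\le\alpha$.
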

The proof of this theorem is presented in Appendix~\ref{sec:ProofFDX}.
Notice that the dependency condition (\ref{eq:NegativeDependence}) is satisfied, in particular, 
if the $p$-values are independent. 

\smallskip

\begin{example}
For given values of $\alpha\in (0,1)$ and $\gamma\in (\alpha,1)$, consider $\LORD$ algorithm with
$b_0 =\alpha$, $\psi_j = \alpha$ for $j\in \naturals$ and $w_0 = (\gamma-\alpha)/2$. By Equation~\eqref{eq:Mj}, we have $M_j = \alpha_j \ind(R_j = 0)$. In order to satisfy condition \AssIV, the rule keeps track of $M(n)$ and stops as soon as inequality~\eqref{eq:FDXcondition} is violated:
\begin{align}\label{eq:STLORD}
\alpha_{n+1} + \sum_{i=1}^n \alpha_i \ind(R_i = 0) >\frac{\gamma-\alpha}{2(1-\alpha)}\,.
\end{align}
Note that for $\LORD$, the potential sequence $W(n)$ always remain positive and thus the stopping criterion is defined solely based on the above inequality. Clearly, this rule satisfies assumptions \AssI, \AssII, \AssIII, \AssIV and by applying Theorem~\ref{thm:FDX} ensures $\FDX_\gamma \le \alpha$.

We use the above rule to control false discovery exceedance for the simulation setup described in Section~\ref{sec:simulationT} for values of $\alpha = 0.05$ and $\gamma = 0.15$. The results are summarized in Table~\ref{tbl:FDX}. The false discovery rates and proportions are estimated using $30,000$ realizations of test statistics. As we see the rule controls both $\FDR$ and $\FDX_\gamma$ below $\alpha$.
\begin{center}
\vspace{0.2cm}

{\small
\begin{tabular}{>{\centering\arraybackslash}p{1.3in}ccccc}
\hline
\multicolumn{6}{c}{Online control of $\FDX_\gamma$ using stopping criterion~\eqref{eq:STLORD}} \\
\hline
\multirow{2}{*}{$\pi_1$}  & \multirow{2}{*}{$0.005$}  & \multirow{2}{*}{$0.01$}  & \multirow{2}{*}{$0.02$} & \multirow{2}{*}{$0.03$} &\multirow{2}{*}{$0.04$} \\[0.5cm]
\hline
 $\FDX_\gamma$& 0.028 & 0.004 & 0.000  & 0.000 & 0.000  \\
$\FDR$ & 0.006 & 0.005 & 0.005  & 0.005 & 0.005  \\
Power & 0.666 & 0.699  & 0.679  & 0.658 & 0.639  \\
\hline
\end{tabular}
}
\captionof{table}{$\FDX_\gamma$ and $\FDR$ for $\LORD$ with stopping criterion~\eqref{eq:STLORD} using $30,000$ realizations of the test statistics. Here, $\alpha = 0.05$ and $\gamma = 0.15$, and $\pi_1$ represents the fraction of truly non-null hypotheses that appear at the beginning of the stream as described in Section~\ref{sec:simulationT}.}
\label{tbl:FDX}
\end{center}

\end{example}
%
%

\section{Discussion}
\label{sec:Discussion}

Our main result is that all generalized alpha investing rules control $\FDR$, 
provided they satisfy a natural monotonicity condition. This result can be regarded as reinforcing and complementing the conclusions of
\cite{generalized-alpha} which introduced generalized alpha investing, and proved $\mFDR$ control. Since the two metrics can be significantly 
different, with $\FDR$ somewhat more broadly accepted, this should develop more confidence towards the practical use of these methods.

Within this broad family, we believe that $\LORD$ is mainly appealing because of its simplicity: 
testing levels only depend on the the time of the most recent discovery,
and not on the whole past. This property also simplifies the analysis of $\LORD$. In particular, in
Section \ref{sec:power} we obtained bounds on the statistical power of the $\LORD$ under the mixture model, that could be used to set the parameters of the rule.
Further, a simple modification of $\LORD$ was suggested for the case of dependent $p$-values, cf. Section \ref{sec:depFDP}.

While our work broadly supports the use of generalized alpha investing rules (and, in particular, $\LORD$),
we believe that extra caution should be taken when the false discovery proportion can deviate significantly from its expectation
(which is the $\FDR$). This can be the case when the number of hypotheses is not very large, or there is significant correlation. 
In this case, the false discovery exceedance ($\FDX$) is a more meaningful metric, and additional 
constraints should be imposed on generalized alpha investing rules.

%
%

\newpage
\appendix
\section{FDR versus mFDR}\label{sec:mfdr_fdr}

The two main criteria discussed in the present paper are $\FDR(n)$ and $\mFDR_{\eta}(n)$ at level $\eta = w_0/b_0$. 
Recall that these are formally defined by
\begin{align}
\FDR(n) &\equiv \sup_{\btheta\in\Theta}\E\Big\{\frac{V^{\theta}(n)}{R(n)\vee 1}\Big\}\, ,\\
\mFDR_{\eta}(n) &\equiv \sup_{\btheta\in\Theta}\frac{\E\, V^{\theta}(n)}{\E\,R(n)+\eta}\, .
\end{align}
In addition, we introduced a new metric, that we called  $\sFDR_{\eta}(n)$ (for smoothed FDR): 
\begin{align}
\sFDR_{\eta}(n) \equiv \sup_{\btheta\in\Theta}\E\Big\{\frac{ V^{\theta}(n)}{R(n)+\eta}\Big\}\, .
\end{align}

Note that $\mFDR$ is different from the other criteria in that it does not control the probability of a property of the realized set of tests; rather it controls
the ratio of expected number of false discoveries to the expected number of discoveries.
In this appendix we want to document two points already mentioned in the main text:
\begin{enumerate}
\item $\FDR$ and $\mFDR$ can be --in general-- very different.  More precisely, we show through a numerical simulation that
controlling $\mFDR$ does not ensure controlling $\FDR$ at a similar level.
 This provides further motivation for  Theorem~\ref{thm:FDR-control}. 

We discuss this point in Section \ref{sec:Different}.
\item Theorem~\ref{thm:FDR-control} establishes  $\FDR(n)\le b_0+w_0$ and  Theorem~\ref{thm:FDR-control-2} ensures $\sFDR_{w_0/b_0}(n)\le b_0$.
Analogously, \cite{generalized-alpha} proved $\mFDR_{w_0/b_0}(n)\le b_0$. In other words, if we target $\mFDR$ or $\sFDR$ control, we 
can use larger values of $w_0$ and hence --potentially-- achieve larger power.

 We explore this point in Section \ref{sec:POWERmFDR}.
\end{enumerate}

\subsection{FDR and mFDR can be very different}
\label{sec:Different}

\begin{example}
\label{ex:SingleStep}
Since Theorem~\ref{thm:FDR-control} shows that generalized alpha investing procedures \emph{do control} FDR,
our first example will be of different type. Indeed, since we want to show that \emph{in general} FDR and mFDR are very
different, we will consider a very simple rule.

We observe $\bX= (X_1,X_2, \dots, X_n)$ where $X_j = \theta_j + \varepsilon_j$ and we want to test
null hypotheses $H_j:\theta_j = 0$. The total number of tests is $n = 3,000$ from which the first $n_0=2,700$ hypotheses are null and the remaining are non-null. For null cases,
$X_1, X_2,\dotsc, X_{n_0}$ are independent $\normal(0,1)$ observations. Under the alternative, we assume $\theta_j = 2$
and $(\varepsilon_{n_0+1}, \dotsc, \varepsilon_n)$ follows a multivariate normal distribution with covariance 
$\Sigma = \rho \bone\bone^{\sT}+(1-\rho)\id$, with $\bone$ the all-one vector. 
Here $\rho$ controls the dependency among the non-null test statistics.
In our simulation, we set $\rho = 0.9$.
It is worth noting that this setting is relevant to many applications
as it is commonly observed that the non-null cases are clustered.   

We consider a single step testing procedure, namely
\begin{eqnarray}
R_j = \begin{cases}
1 &\text { if } |X_i| \le t\,,\\
0 &\text{ if } |X_i|>t\,.
\end{cases}\label{eq:SingleStep}
\end{eqnarray}
The value of $t$ is varied from 2 to 4 and $\mFDR$ and $\FDR$ are computed by averaging over $10^4$ replications.
The result is shown in Figure~\ref{fig:mfdr_fdr}.  As we see the two
measures are very different. For instance, choosing $t=3$ controls $\mFDR$ below $\alpha=0.2$, 
but results in $\FDR\gtrsim 0.6$.
\end{example}

\begin{figure}[t]
\centering
 \includegraphics[width = 2.6in]{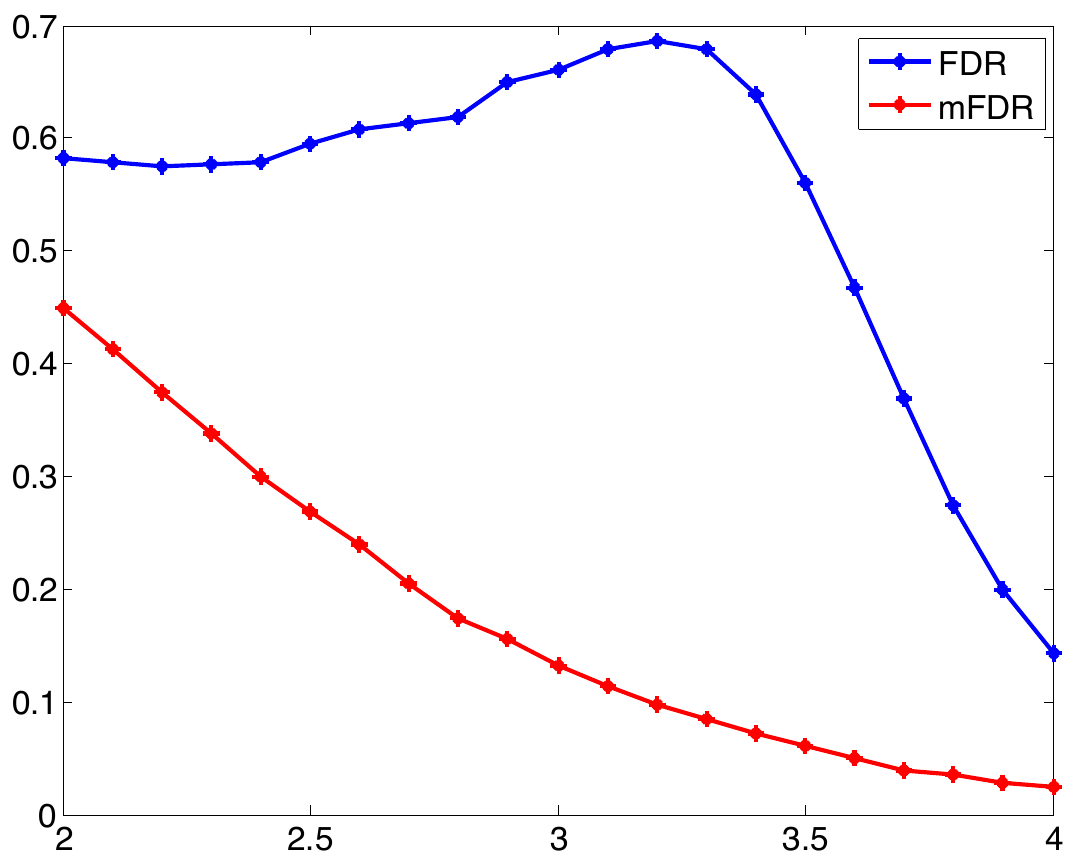}
 \put(-110,-10){threshold $t$}
  \caption{$\FDR$ and $\mFDR$ for the single step procedure of Equation~(\ref{eq:SingleStep})  under the 
setting of Example \ref{ex:SingleStep}.}
 \label{fig:mfdr_fdr}
 \end{figure}
\begin{figure}[h]
\centering
 \includegraphics[width = 2.6in]{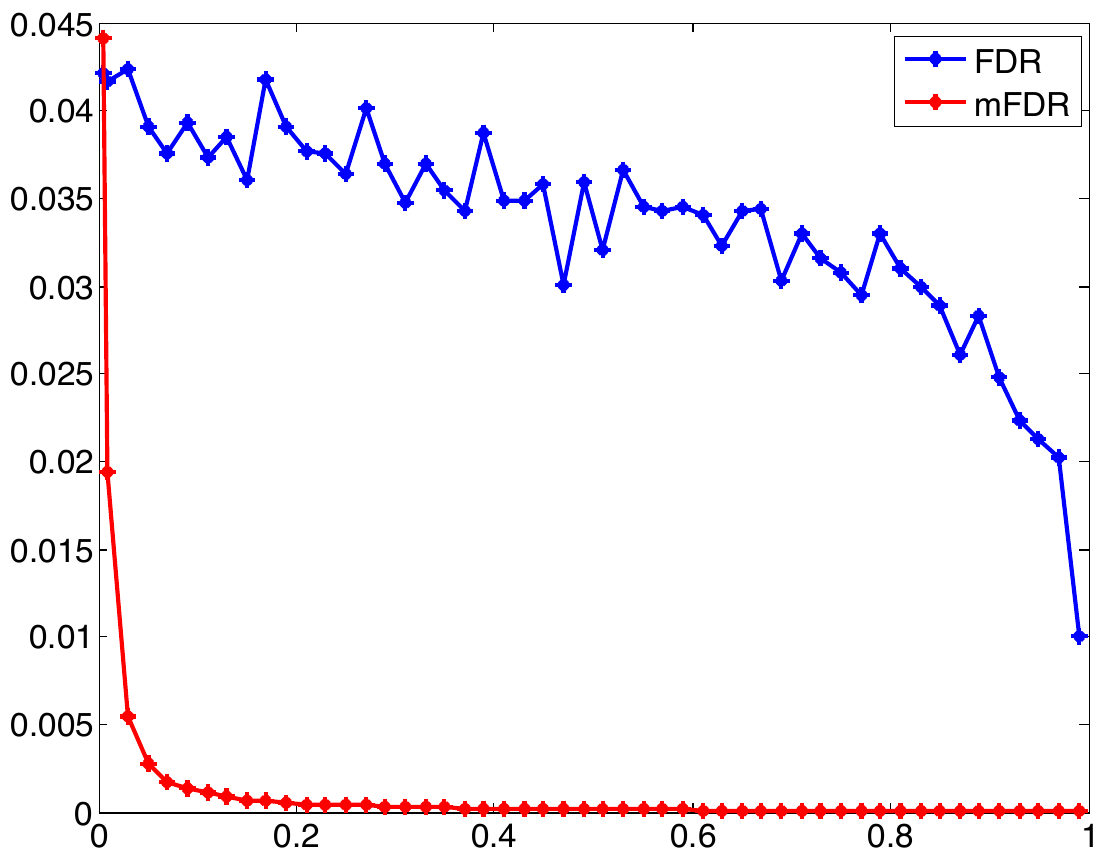}
 \put(-95,-10){$\pi_1$}
 \caption{$\FDR$ and $\mFDR$ for alpha investing rule under the setting of Example \ref{ex:SingleStep} for various fraction of non-null hypotheses $\pi_1$.}
 \label{fig:AlphaInvest_mfdr_fdr}
 \end{figure}

\begin{example}
We next consider the alpha investing rule, as described in Subsection~\ref{subsec:AI} with $\alpha_j$ set based on equation~\eqref{eq:alphaj-clust}, at nominal value $\alpha =0.05$.
 In this case Theorem~\ref{thm:FDR-control} guarantees $\FDR\le \alpha$. However $\FDR$ and $\mFDR$ can still be very 
different as demonstrated in Figure \ref{fig:AlphaInvest_mfdr_fdr}.

The hypothesis testing problem is similar to the one in the previous example. 
We consider a normal vector $\bX= (X_1,X_2,\dots,X_n)$, $X_i = \theta_i+\eps_i$, $n=3,000$, and want to test for the null 
hypotheses $\theta_i=0$. The noise covariance has the same structure as in the previous example, 
and the means are $\theta_j=4$ when the null is false.
Unlike in the previous  example, we consider a varying proportion $\pi_1$ of non-zero means. Namely,
the null is false for $i\in \{n_0+1,\dots, n\}$, with $(n-n_0) = \pi_1 n$. 

The results in Figure~\cite{alpha-investing} are obtained by averaging over $10^4$ replications. 
Alpha investing controls $\mFDR,\FDR\le 0.05$, as expected (Indeed conditions of Theorem~\ref{thm:FDR-control} hold
in this example since the $p$-values of true nulls are independent from other $p$-values).
However, the two metrics are drastically different and a bound on $\mFDR$ does not imply a bound on $\FDR$ at the same level. For instance, at $\pi_1=0.1$
we have $\mFDR\lesssim 0.001$ while $\FDR\approx 0.04$. 
\end{example}

\subsection{Comparing FDR and mFDR with respect to statistical power}
\label{sec:POWERmFDR}

\begin{figure}
\vspace{-0.5cm}

\phantom{A}\hspace{5cm}
\begin{tabular}{c}
\includegraphics[width = 7.cm]{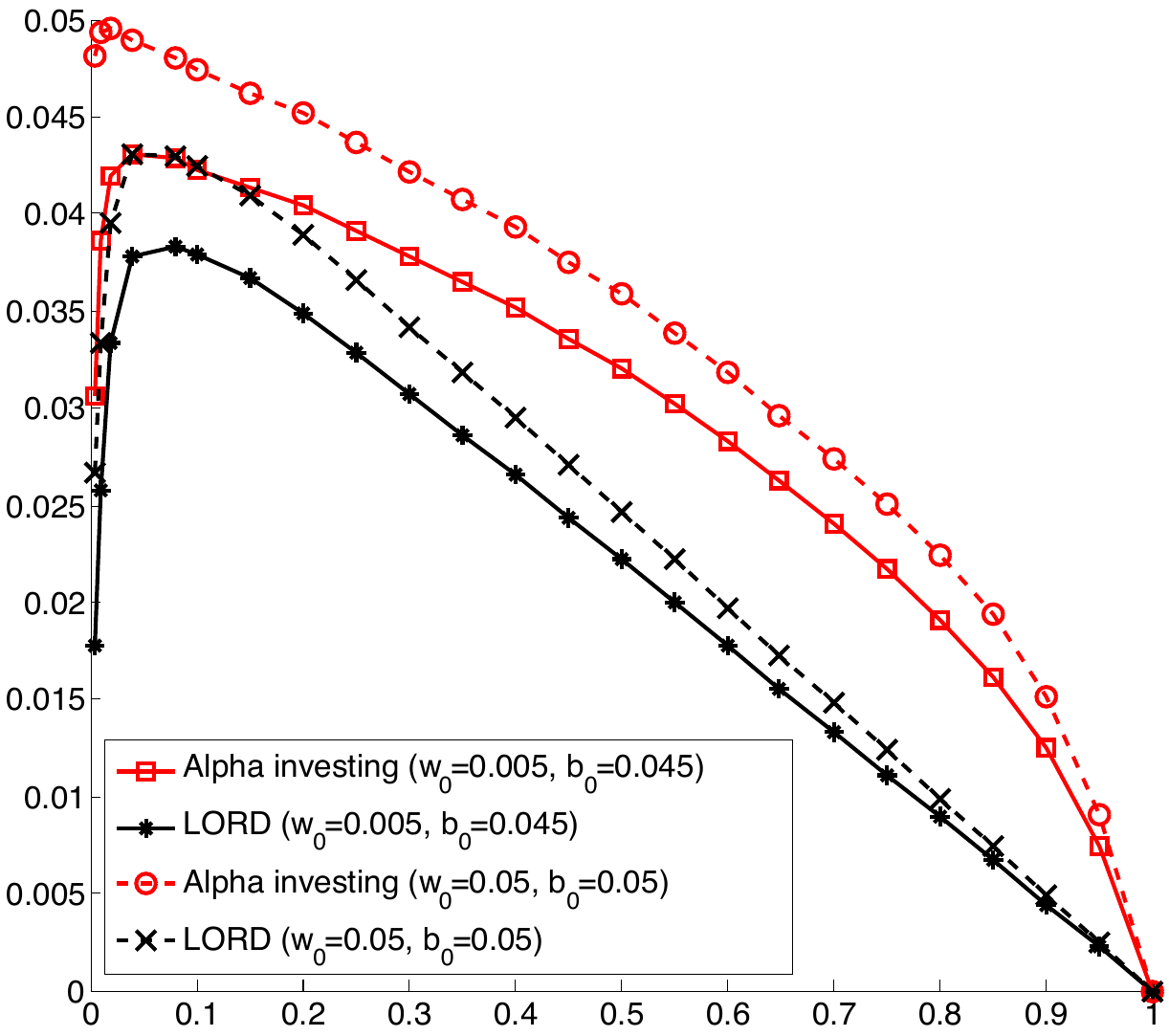}\\
\phantom{A}\\
\includegraphics[width=7.cm]{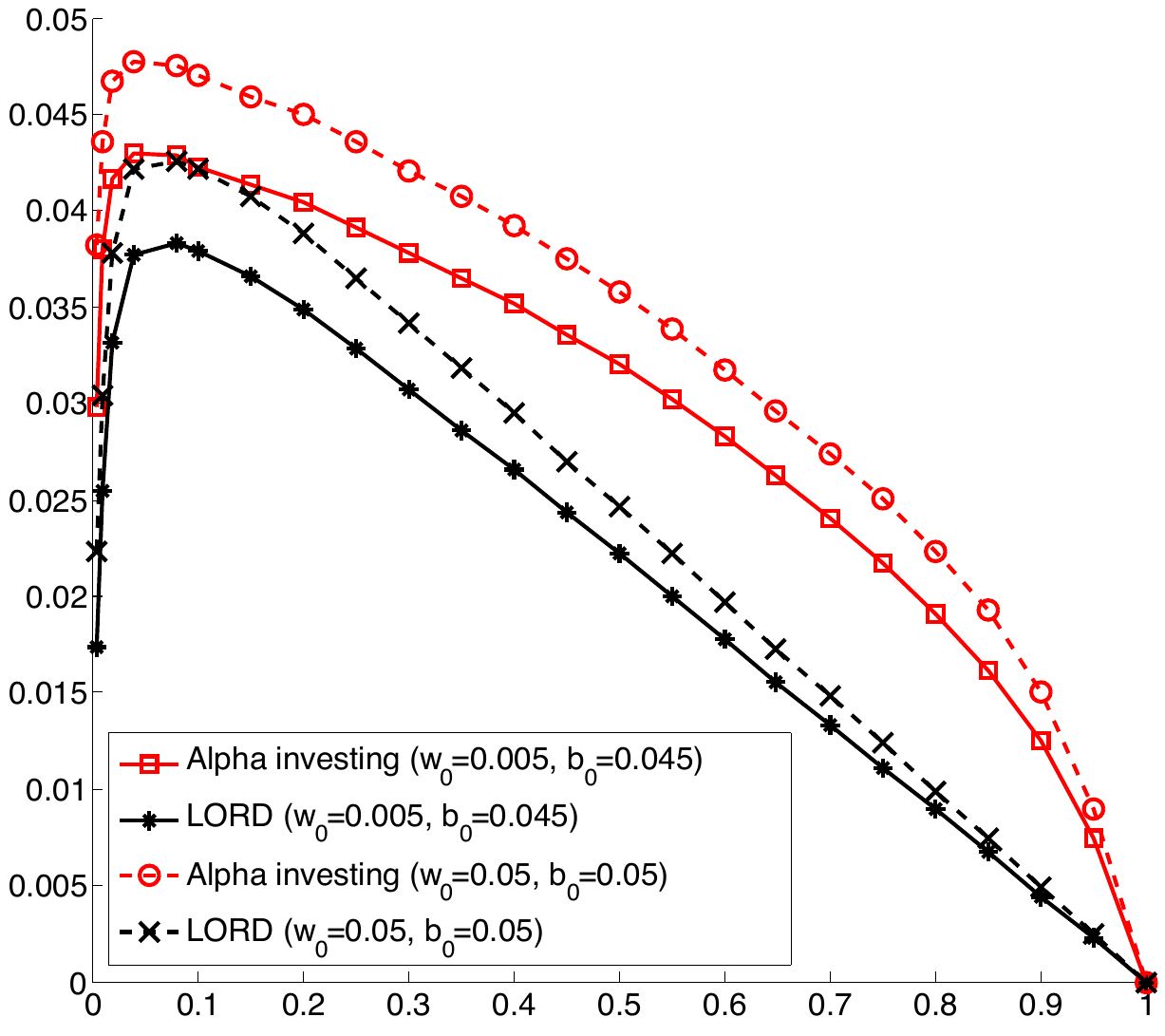}\\
\phantom{A}\\
\includegraphics[width=7.cm]{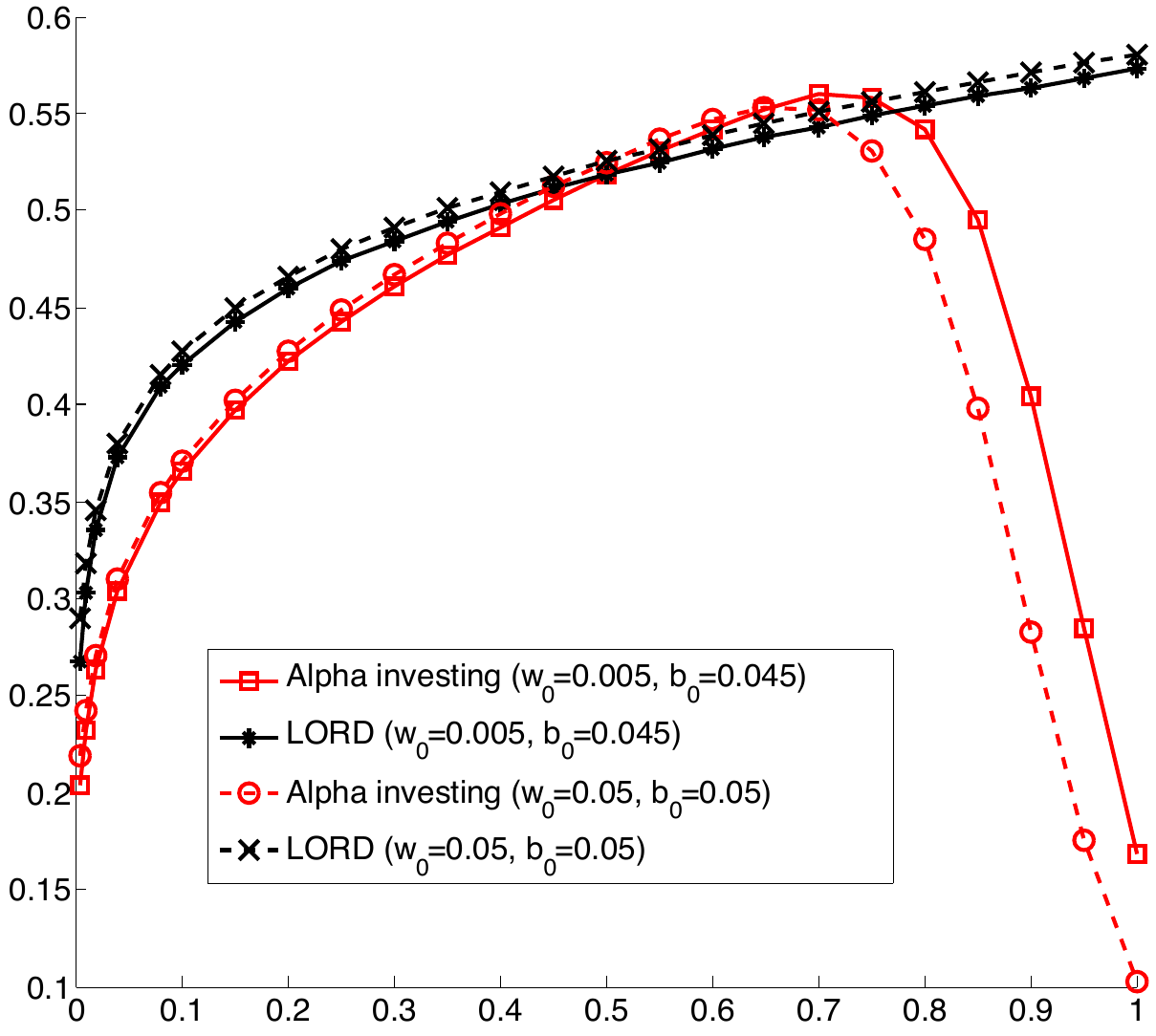}
\end{tabular}
\put(-110,100){$\pi_1$}
\put(-110,-90){$\pi_1$}
\put(-110,-290){$\pi_1$}
\put(-215,190){\rotatebox{90}{{\scriptsize $\FDR$}}}
\put(-215,-15){\rotatebox{90}{{\scriptsize $\sFDR$}}}
\put(-215,-220){\rotatebox{90}{{\scriptsize Statistical Power}}}

\caption{$\FDR$ (top), $\sFDR$ (center), and statistical power (bottom) versus fraction of non-null hypotheses $\pi_1$,
for the Gaussian setup described in Section~\ref{sec:simulation-syn}. Solid lines: parameters are tuned to control $\FDR$. 
Dashed lines: parameters are tuned to control $\mFDR$ and $\sFDR$. }\label{fig:mFDRvsFDR}
\end{figure}

In Figure \ref{fig:mFDRvsFDR} we simulated two generalized alpha investing rules, namely $\LORD$ and simple alpha investing \cite{alpha-investing},
under the same setting of Section \ref{sec:ComparisonOffline}, with Gaussian alternatives, and compare two different choices of the parameters
$w_0$ (initial wealth) and $b_0$ (bound on the reward function in Eqs.~(\ref{eq:A1a}), (\ref{eq:A1b})):
\begin{enumerate}
\item[] \emph{Solid lines.} Correspond to the choice already used in Section  \ref{sec:ComparisonOffline}, namely $w_0=0.005$, $b_0=0.045$.
By Theorem \ref{thm:FDR-control}, this is guaranteed to control $\FDR\le 0.05$.
\item[] \emph{Dashed lines.} Correspond to a more liberal choice, $w_0=0.05$, $b_0=0.05$. By \cite[Theorem 1]{generalized-alpha}, this controls
$\mFDR_1\le 0.05$. Theorem \ref{thm:FDR-control-2} provides the additional guarantee
\begin{align}
\sFDR_{1}(n) \equiv \sup_{\btheta\in\Theta}\E\Big\{\frac{ V^{\theta}(n)}{R(n)+1}\Big\}\le 0.05\, .
\end{align}
\end{enumerate}

In Figure \ref{fig:mFDRvsFDR} we compare $\FDR$, $\sFDR$ and statistical power for these two choice. As expected $\FDR$ and $\sFDR$
are slightly higher for the second choice, but still $\FDR$ appears to be below the target value $\alpha = 0.05$.
This is to be expected on the basis of Remark \ref{remark:BetterFDR} which implies $\FDR\lesssim b_0$ when the number of discoveries 
$R(n)$ is large with high probability.
We conclude that --in the case of non-nulls arriving at random, and sufficiently many strong signals-- $\mFDR$ control is conservative enough.

The last panel in the same figure shows the increase in power obtained by the second choice of parameters that targets
$\mFDR$ control.  Note that the advantage is --in this example-- somewhat marginal. In other words, $\FDR$ control can be guaranteed
without incurring large losses in power.
 %
\section{Proof of Lemma~\ref{LORD-mon}}\label{proof:LORD-mon}
We first show that all the variants of $\LORD$ rule are generalized alpha investing rule.
Equations~\eqref{eq:A1a} and~\eqref{eq:A1b} hold trivially by these rules due to Equation~\eqref{eq:LORD-general}.

For $\LORD 1$, Equation~\eqref{eq:Nonneg} holds because if $j\le t_1$, then
\begin{align*}
W(j-1)   &= w_0 - \sum_{i=1}^{j-1} \varphi_i = w_0 \Big(1-  \sum_{i=1}^{j-1} \gamma_i \Big) \ge w_0 \,\gamma_{j} = \varphi_j\, .
\end{align*}
If $j > t_1$, then
\begin{align*}
W(j-1)   &= W(\tau_j) - \sum_{i=\tau_j+1}^{j-1} \varphi_i = W(\tau_j) -\Big(\sum_{k=1}^{j-1-\tau_j}\gamma_k\Big) b_0 \\
&\ge b_0 \Big(1 - \sum_{k=1}^{j-1-\tau_j}\gamma_k\Big)\\
& \ge b_0 \gamma_{j-\tau_j} = \varphi_j\, ,
\end{align*}
where in the first inequality we used the fact that $W(\tau_j)\ge b_0$ since $j> t_1$ and the rule adds an amount of $b_0$ towards the wealth when a discovery occurs.

We next show that Equation~\eqref{eq:Nonneg} holds for $\LORD 2$. Let $k = |T(j)|$. We write
\begin{align*}
W(j-1)  &= w_0 + b_0 k - w_0\sum_{i=1}^{j-1}\gamma_i 
- b_0 \sum_{\ell \in T(j)} \sum_{i=\ell+1}^{j-1}\gamma_{i-\ell}\\
&= w_0\Big(1- \sum_{i=1}^{j-1}\gamma_i\Big) + b_0 \sum_{\ell \in T(j)} \Big(1 - \sum_{i=\ell+1}^{j-1}\gamma_{i-\ell} \Big)\\
&= w_0\Big(1- \sum_{i=1}^{j-1}\gamma_i\Big) + b_0 \sum_{\ell \in T(j)} \Big(1 - \sum_{i=1}^{j-\ell-1}\gamma_{i} \Big)\\
&\ge w_0 \gamma_j + b_0 \sum_{\ell \in T(j)} \gamma_{j-\ell} =\varphi(j)\,.
\end{align*}

For $\LORD 3$, Equation~\eqref{eq:Nonneg} stands because 
\begin{align*}
W(j-1)   &= W(\tau_j) - \sum_{i=\tau_j+1}^{j-1} \varphi_i = W(\tau_j) \,\Big\{1-\sum_{k=1}^{j-1-\tau_j}\gamma_k\Big\} \\
& \ge W(\tau_j) \,\gamma_{j-\tau_j} = \varphi_j\, .
\end{align*}
This concludes that Condition \AssI is satisfied by the three variants of $\LORD$ rule. Also, Condition \AssII follows easily for these rules because $W(i) =0$ implies
$\alpha_j=0$ and $W(j) =0$ for all $j\ge i$. 
 
 We next show that $\LORD 1$ and $\LORD 2$ are monotone rules. To this end, it suffices to show that if $\bR_1^{i-1} \preceq \btR_1^{i-1}$, then $\alpha_i\le \tilde{\alpha}_i$.
 For $\LORD 1$, note that $\bR_1^{i-1} \preceq \btR_1^{i-1}$ implies $\tau_i \le \tilde{\tau}_i$. Further, $\bgamma$ is a non-increasing sequence and
 $w_0 \le b_0$. Monotonicity then follows from the way the test levels are set, as by Equation~\eqref{alpha:LORD1}.
 
 Similarly, for $\LORD 2$, $\bR_1^{i-1} \preceq \btR_1^{i-1}$ implies $T(i)\le \tilde{T}(i)$ and monotonicity follows from Equation~\eqref{alpha:LORD2}. 
%
\section{FDR for independent $p$-values (Proof of Theorem 3.1 and Theorem 3.3)}
\label{sec:ProofFDRControl}

\begin{lemma}\label{lem:main1}
Assume the $p$-values $p_1,\dots p_n$ to be independent, and that $\theta_j=0$ (i.e. $p_j$ is a true null $p$-value). Let $R(n)=\sum_{i=1}^nR_i$ be the total number of rejection  up until time $n$ for a monotone online rule. Let $f:\integers_{\ge 0}\to \reals_{\ge 0}$ be a non-increasing, non-negative function on the integers. 
Then
\begin{align}
\E\Big\{\ind\{p_j\le \alpha_j\} \, f(R(n))\Big|\cF_{j-1}\Big\}\le
  \E\Big\{\alpha_j\,  f(R(n))\Big|\cF_{j-1}\Big\}\,.
\end{align}
\end{lemma}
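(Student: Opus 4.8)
# Proof Plan for Lemma \ref{lem:main1}

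The plan is to exploit the independence of $p_j$ from the rest of the randomness, conditionally on $\cF_{j-1}$, together with the monotonicity of the rule. The subtle point is that $R(n)$ depends on $p_j$ (through $R_j$ and, via the monotonicity of later test levels, through all subsequent decisions), so one cannot simply pull $\alpha_j$ out of the conditional expectation.

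First I would introduce a coupling. Fix a realization of $\bR_1^{j-1}$, so that $\alpha_j = \alpha_j(\bR_1^{j-1})$ is a constant. Consider two runs of the procedure on the \emph{same} sequence of $p$-values $(p_i)_{i\ne j}$ but with $p_j$ replaced, in the first run, by an arbitrary value $p \le \alpha_j$ (forcing $R_j = 1$), and in the second run by an arbitrary value $p' > \alpha_j$ (forcing $R_j = 0$). Denote by $R(n)$ and $R'(n)$ the total number of rejections in the two runs. Because the rule is monotone, $\alpha_i(\bx) \le \alpha_i(\by)$ whenever $\bx \preceq \by$; since the two runs agree on coordinates $1,\dots,j-1$ and the first has $R_j = 1 \ge 0 = R_j'$, an induction on $i = j, j+1, \dots, n$ shows that the decision vector of the first run dominates that of the second coordinatewise, hence $R(n) \ge R'(n)$. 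Since $f$ is non-increasing on the integers, this gives $f(R(n)) \le f(R'(n))$ pointwise, i.e., forcing a rejection at step $j$ can only make $f(R(n))$ smaller.

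Next I would use this monotonicity to compare the two sides. Conditionally on $\cF_{j-1}$, write $G = \sigma(\cF_{j-1}, (p_i)_{i \ne j})$, and note $p_j$ is independent of $G$ and $\alpha_j$ is $G$-measurable. On the event $\{p_j \le \alpha_j\}$ the quantity $f(R(n))$ equals the value $f(R^{(1)}(n))$ obtained by \emph{any} forced rejection at step $j$ (the later $p$-values are fixed, so $R(n)$ does not otherwise depend on the specific value of $p_j$ below $\alpha_j$); call this $G$-measurable random variable $A$. Similarly on $\{p_j > \alpha_j\}$ it equals a $G$-measurable random variable $B$, and by the coupling $A \le B$ almost surely. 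Then
\begin{align}
\E\big[\ind\{p_j \le \alpha_j\}\, f(R(n)) \,\big|\, G\big] &= \prob(p_j \le \alpha_j \mid G)\, A = \alpha_j A \le \alpha_j B\,,\\
\E\big[\alpha_j\, f(R(n)) \,\big|\, G\big] &= \alpha_j\big(\alpha_j A + (1-\alpha_j) B\big) \ge \alpha_j\big(\alpha_j B + (1-\alpha_j) B\big) = \alpha_j B\,.
\end{align}
Combining the two displays yields $\E[\ind\{p_j \le \alpha_j\} f(R(n)) \mid G] \le \E[\alpha_j f(R(n)) \mid G]$, and taking conditional expectation with respect to $\cF_{j-1} \subseteq G$ (tower property) gives the claim.

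The main obstacle, and the step deserving the most care in the writeup, is the monotone coupling argument: one must verify cleanly that with $(p_i)_{i\ne j}$ and $\bR_1^{j-1}$ held fixed, the decision vector is a monotone function of the indicator $R_j$, using only property that $\alpha_i$ is non-decreasing in past decisions. A careful induction handles this: if the decision vectors agree up to some index $i-1$ with the first dominating the second, then $\alpha_i^{(1)} \ge \alpha_i^{(2)}$, so $R_i^{(1)} = \ind\{p_i \le \alpha_i^{(1)}\} \ge \ind\{p_i \le \alpha_i^{(2)}\} = R_i^{(2)}$ since the same $p_i$ is used in both runs. One should also note the trivial case $\alpha_j = 0$, where the left side is zero and there is nothing to prove.
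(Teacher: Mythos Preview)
Your approach is essentially the paper's: both introduce a coupling that forces rejection at step $j$ (the paper sets $p_j=0$ and calls the resulting rejection count $\tR(n)$; you condition on $G=\sigma((p_i)_{i\ne j})$ and name the two outcomes $A,B$), use monotonicity of the rule to compare the totals, and use independence of $p_j$ to factor out $\alpha_j$. The paper's writeup is marginally slicker because it never introduces $B$: it observes $\ind\{p_j\le\alpha_j\}f(R(n))=\ind\{p_j\le\alpha_j\}f(\tR(n))$, takes conditional expectation to get $\E[\alpha_j f(\tR(n))\mid\cF_{j-1}]$, and then uses $\tR(n)\ge R(n)$ (your $A\le B$) to bound this by $\E[\alpha_j f(R(n))\mid\cF_{j-1}]$.

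There is one algebraic slip in your second display. From $A\le B$ one gets $\alpha_j A+(1-\alpha_j)B\ge \alpha_j A+(1-\alpha_j)A=A$, not $\ge B$; as written, your inequality $\alpha_j(\alpha_j A+(1-\alpha_j)B)\ge \alpha_j B$ would require $A\ge B$. The corrected chain is simply
\[
\E\big[\alpha_j f(R(n))\mid G\big]=\alpha_j\big(\alpha_j A+(1-\alpha_j)B\big)\ge \alpha_j A=\E\big[\ind\{p_j\le\alpha_j\}f(R(n))\mid G\big],
\]
which is what you need; the bound $\alpha_j A\le\alpha_j B$ in your first display is true but not used.
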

\begin{proof}
We let $\bp = (p_1,p_2,\dots,p_n)$ be the sequence of $p$-values
until time $n$, and denote by  $\tbp = (p_1,p_2, \dots,p_{j-1},0,
p_{j+1}\dots, p_n)$ the vector obtained from $\bp$ by setting $p_j = 0$.
We let $\bR =(R_1,R_2,\dotsc,R_n)$  be the sequence of decisions on
input $\bp$, and denote by $\tbR=(\tR_1,\tR_2,\dots,\tR_n)$
the sequence of decision when the same rule is applied to input $\tbp$.
The total numbers of rejections are denoted by $R(n)$ and $\tR(n)$.
Finally, let $\alpha_\ell$ and $\talpha_\ell$ denote test levels given by the rule
applied to $\bp$ and $\tbp$, respectively.

Since $\alpha_{\ell} = \alpha_{\ell}(\bR_1^{\ell-1})$, $\talpha_{\ell} =
\alpha_{\ell}(\tbR_1^{\ell-1})$, we have $\alpha_\ell = \talpha_\ell$ for $\ell\le j$.
Observe that  on the event $\{p_j\le \alpha_j\}$, we have $R_j=\tR_j$
and therefore $\alpha_\ell = \talpha_\ell$ for all $1\le \ell\le n$. In words, when $H_j$ is rejected,
the actual value of $p_j$ does not matter.
Therefore, on the same event, we have $R(n) = \tR(n)$,  whence
\begin{align}
\ind\{p_j\le\alpha_j\}\, f(R(n))=
  \ind\{p_j\le\alpha_j\}\, f(\tR(n))\,.
\end{align}
Taking conditional expectations
\begin{align}
\E\Big\{\ind\{p_j\le\alpha_j\} \, f(R(n))\Big|\cF_{j-1}\Big\} 
&= \E\Big\{\ind\{p_j\le\alpha_j\} \, f(\tR(n))\Big|\cF_{j-1}\Big\}\nonumber\\
&= \E\Big\{\alpha_j\, f(\tR(n))\Big|\cF_{j-1}\Big\}\,,\label{eq:fdr1}
\end{align}
where we used the fact that, conditional on $\cF_{j-1}=\sigma(R_1,
\dotsc, R_{j-1})$, level $\alpha_j$ is deterministic (it is measurable
on $\cF_{j-1}$). Further,  $p_j$ is independent of the other $p$-values
and thus in particular is independent of the sigma-algebra generated
by $\cF_{j-1}\cup \sigma(\tR(n))$.

Note that $\tR_j =1$ and by monotonicity of the rule, $\talpha_{j+1}\ge \alpha_{j+1}$ and
hence $\tR_{j+1} \ge R_{j+1}$. Repeating this argument, we obtain $\tbR \succeq \bR$ which implies
$\tR(n) \ge R(n)$.  Hence, equation~\eqref{eq:fdr1} yields the desired result.
\end{proof}

We next prove Theorem~\ref{thm:FDR-control} and Theorem \ref{thm:FDR-control-2}.
The argument can be present in a unified way, applying Lemma \ref{lem:main1} to two different choices of the function $f(\,\cdot\,)$.
\begin{proof}[Proof (Theorem~\ref{thm:FDR-control} and Theorem~\ref{thm:FDR-control-2})]
As above, $R(j)=\sum_{i=1}^jR_i$ denotes the number of discoveries up
until time $j$, and $V(j)$ the number of false discoveries among them.
Fixing $f:\integers_{\ge 0}\to\reals_{\ge 0}$ a non-negative, non-increasing function, we define the
sequence of random variables
\begin{align}
A(j) \equiv \big\{b_0 R(j) - V(j)-  W(j)\big\} f(R(n))\,,
\end{align}
indexed by $j\in\{1,2,\dots,n\}$.
First of all, note that $A(j)$ is
integrable. Indeed $0\le R(j), V(j)\le j$, and $W(j)$ takes at most
$2^{j}$ finite values (because it is a function of
$\bR_1^j\in\{0,1\}^j$). 

 Let $A_j = A(j) -A(j-1)$, $V_j  = V(j)-V(j-1)$ and 
$W_j = W(j)-W(j-1) = -\varphi_j + R_j \psi_j$. Hence, $A_j  = \{(b_0-\psi_j)R_j - V_j +\varphi_j\}\,f(R(n))$.
Assuming $\theta_j=0$ (i.e. the $j$-th null hypothesis $H_j$ is true) we have $R_j = V_j$ and thus
$A_j =\{(b_0 - \psi_j -1)R_j + \varphi_j\}\, f(R(n))$.
Taking conditional expectation of this quantity (and recalling that
$\varphi_j$, $\psi_j$ are measurable on $\cF_{j-1}$), we get
\begin{align*}
\E(A_j|\cF_{j-1}) &= (b_0-\psi_j-1) \E\Big(R_j f(R(n))\Big|\cF_{j-1}\Big) + 
\E\Big(\varphi_j f(R(n))\Big|\cF_{j-1}\Big)\\
&\ge (b_0-\psi_j-1) \E\Big(\alpha_j\, f(R(n))\Big|\cF_{j-1}\Big) + \E\Big(\varphi_j\, f(R(n))\Big|\cF_{j-1}\Big)\\
&=  \E\Big\{\big((b_0-\psi_j-1) \alpha_j + \varphi_j\big)\, f(R(n))\Big|\cF_{j-1}\Big\}\\
&\ge \E \Big\{\big((b_0+1-b_0-\varphi_j/\alpha_j - 1) \alpha_j + \varphi_j\big) \, f(R(n))\Big|\cF_{j-1}\Big\} = 0\,.
\end{align*}
The first inequality holds because of Lemma~\ref{lem:main1} and noting that $\psi_j\ge0$ and $b_0\le 1$.
The last step follows from condition~\eqref{eq:A1b} that holds for
generalized alpha investing rules.

Assume next $\theta_j\neq  0$  (i.e. the $j$-th null hypothesis $H_j$
is true).  In this case $V_j = 0$, and therefore $A_j = \{(b_0 - \psi_j) R_j + \varphi_j\}\, f(R(n))$.
Taking conditional expectation, we get
\begin{align*}
\E(A_j|\cF_{j-1}) &= (b_0-\psi_j) \E\Big( R_j \, f(R(n))\Big|\cF_{j-1}\Big) + \E\Big(\varphi_j \, f(R(n))\Big|\cF_{j-1}\Big)\\
&\ge (b_0-\varphi_j-b_0) \E \Big(R_j \, f(R(n))\Big|\cF_{j-1}\Big) + \E\Big(\varphi_j\, f(R(n))\Big|\cF_{j-1}\Big)\\
&\ge  \E\Big\{\big(-\varphi_j R_j + \varphi_j\big) \, f(R(n))\Big|\cF_{j-1}\Big\}\ge 0\,,
\end{align*}
where the first inequality follows from condition~\eqref{eq:A1a} and in the last step we used the fact $R_j \le1$.

We therefore proved that $\E\{A_j|\cF_{j-1}\}\ge 0$ irrespectively of
$\theta_j$. Since $V(0) = R(0) = 0$, we get $A(0) = -W(0) \, f(R(n))\ge -w_0 \, f(R(n))$.  Therefore
\begin{align*}
\E\{A(n)\}& = \E\{A(0)\} +\sum_{j=1}^n\E\{A_j\}\\
& =- w_0 \E\Big\{f(R(n))\Big\}+\sum_{j=1}^n\E\big\{\E(A_j|\cF_{j-1})\big\} \ge -w_0 \E\Big\{f(R(n))\Big\}\, .
\end{align*}
Using the definition of $A(n)$, and $R(n)/(R(n)\vee 1)\le 1$,  this
implies
\begin{align*}
b_0 \E\Big\{R(n) f(R(n))\Big\}-\E\Big\{V(n) \, f(R(n))\Big\} \E\Big\{W(n)\, f(R(n))\Big\} \ge -w_0 \E\Big\{f(R(n))\Big\}
\end{align*}
Since $W(n)\ge 0$ by definition, this yields
\begin{align*}
\E\Big\{V(n) \, f(R(n))\Big\} \le b_0 \E\Big\{R(n)\, f(R(n)) \Big\} + w_0 \E\Big\{f(R(n))\Big\}\, .
\end{align*}
Substituting $f(R) = 1/(R\vee 1)$ we obtain the claim of Theorem \ref{thm:FDR-control} (as well as Remark \ref{remark:BetterFDR}).

By using instead $f(R) = 1/\{R+(w_0/b_0)\}$, we obtain Theorem \ref{thm:FDR-control-2}.
\end{proof}
%
%
\section{A lower bound on FDR}
\label{sec:FDR-lowerbound}

In this section we prove Remark \ref{remark:LowerBound}, stating that  Theorems \ref{thm:FDR-control} and \ref{thm:FDR-control-2} 
cannot be substantially improved, unless we restrict to a subclass of generalized alpha investing rules.
In particular,  Theorem \ref{thm:FDR-control-2} is optimal and Theorem \ref{thm:FDR-control} is sub-optimal at most by 
an additive term $w_0$. 
A formal statement is given below.
\begin{propo}
For any $w_0,b_0\ge 0 $, there exist a generalized alpha investing rule, with parameters $w_0,b_0$, 
and a sequence of $p$-values satisfying the assumptions of Theorem \ref{thm:FDR-control} such that 
\begin{align}
&\lim\inf_{n\to\infty}\FDR(n)\ge b_0\, ,\\
&\lim\inf_{n\to\infty}\E\Big\{\frac{V^{\theta}(n)}{R(n)+(w_0/b_0)}\Big\}\ge b_0\, . \label{eq:SecondLB}
\end{align}
\end{propo}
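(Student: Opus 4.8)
The plan is to produce a single monotone generalized alpha-investing rule, together with a $p$-value sequence whose true-null $p$-values are i.i.d.\ $\Unif([0,1])$ and independent of the rest, for which the false discovery proportion $V^\theta(n)/(R(n)\vee 1)$ converges to $b_0$ in probability; since this proportion is bounded, its expectation then converges to $b_0$, and the same will hold for $V^\theta(n)/(R(n)+w_0/b_0)$. Assume $w_0>0$ and $b_0\in(0,1)$ (the boundary cases are degenerate and handled by minor modifications of the same construction). Partition $\naturals$ into consecutive \emph{phases}; phase $k$ is a \emph{booster block} $B_k$ of $m_k:=k$ non-null hypotheses whose $p$-values are identically $0$, followed by a \emph{spending block} $S_k$ of true-null hypotheses. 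The alternative distribution is unconstrained in the setting of Theorem~\ref{thm:FDR-control}, so a point mass at $0$ is admissible; a null $p$-value equal to $0$ has probability $0$, so no confusion arises.

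On a booster hypothesis $j$ I set $\varphi_j=W(j-1)$, $\psi_j=W(j-1)+b_0$, and $\alpha_j=W(j-1)/(W(j-1)+1)$. Since $p_j=0\le\alpha_j$ the hypothesis is rejected and the wealth updates to $W(j)=W(j-1)+b_0$; conditions \AssI and \AssII are checked directly (in particular \eqref{eq:A1b}, written in the equivalent form $\varphi_j\ge\alpha_j(\psi_j+1-b_0)$, holds with equality). On a spending hypothesis $j$ I set $\psi_j=0$, $\alpha_j=\min\{\beta,\,W(j-1)/(1-b_0)\}$ for a fixed small $\beta>0$, and $\varphi_j=(1-b_0)\alpha_j$; here \eqref{eq:A1b} again holds with equality, \eqref{eq:A1a} and \eqref{eq:Nonneg} are immediate, and \AssII holds by construction. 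Crucially, on a spending hypothesis the wealth decreases by $\varphi_j$ irrespective of the decision (because $\psi_j=0$), so within a spending block the wealth trajectory, and hence the entire sequence $(\alpha_j)_{j\ge 1}$, is a \emph{deterministic} function of $j$; this makes the rule trivially monotone and makes the $p$-value sequence satisfy the hypotheses of Theorem~\ref{thm:FDR-control}. Each spending block $S_k$ is taken long enough to bring the wealth below $\beta(1-b_0)$, so its entry wealth $W_k^\ast$ satisfies $W_k^\ast=m_kb_0+O(\beta)$ and the expected number of (false) discoveries it produces is $\sum_{j\in S_k}\alpha_j=W_k^\ast/(1-b_0)+O(\beta)$.

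The quantitative step is then elementary. Fix $n$ lying in phase $k\ge 2$; let $M_k=\sum_{j\le k}m_j=k(k+1)/2$ and let $V_{<k}$ be the number of false discoveries made in $S_1,\dots,S_{k-1}$. Every discovery up to time $n$ is either a (rejected) booster or a false discovery, so $R(n)\le M_k+V^\theta(n)$; combined with $V^\theta(n)\ge V_{<k}$ and the inequality $\tfrac{a+v}{c+a+v}\ge\tfrac{a}{c+a}$ for $v\ge 0$, this gives $V^\theta(n)/(R(n)\vee 1)\ge V_{<k}/(M_k+V_{<k})$. Now $V_{<k}$ is a sum of independent Bernoulli variables with mean $\mu_{<k}=\sum_{j<k}W_j^\ast/(1-b_0)+O(k\beta)\sim\frac{b_0}{1-b_0}M_{k-1}$ and variance at most $\mu_{<k}$, so by Chebyshev $V_{<k}/M_{k-1}\to\frac{b_0}{1-b_0}$ in probability; since $m_k/M_{k-1}\to 0$ (the reason for choosing $m_k$ of polynomial, not exponential, growth), this yields $V_{<k}/(M_k+V_{<k})\to b_0$ in probability. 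By boundedness of the ratio, $\E\{V_{<k}/(M_k+V_{<k})\}\ge b_0-\eps_k$ with $\eps_k\to 0$. Since $V^\theta(n)/(R(n)\vee 1)\ge V_{<k}/(M_k+V_{<k})$ for every $n$ in phase $k$ and $\FDR(n)$ is a supremum over $\btheta$, we get $\FDR(n)\ge b_0-\eps_{k(n)}$ with $k(n)\to\infty$, i.e.\ $\lim\inf_n\FDR(n)\ge b_0$. Replacing $R(n)\vee 1$ by $R(n)+w_0/b_0$ only changes $M_k$ to $M_k+w_0/b_0$ in the denominator, which is absorbed since $M_k\to\infty$, giving \eqref{eq:SecondLB}.

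The step I expect to be most delicate is the uniform verification of \AssI and \AssII and of monotonicity, together with the seam behaviour: the first hypothesis when $w_0$ is small, the handoff between a spending block and the next booster block (where the leftover wealth $\rho_k<\beta(1-b_0)$ is simply absorbed into the next booster's $\varphi$), and the final steps of a spending block where the wealth has been depleted and $\alpha_j$ is forced down to $0$. Once the phase structure and the growth rate $m_k=k$ are fixed, the probabilistic content — concentration of $V_{<k}$ and the arithmetic of the ratio — is routine.
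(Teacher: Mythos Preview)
Your construction is correct and gives a valid proof, but it proceeds by a genuinely different route than the paper. The paper works with the natural \LORD rule (any choice of $(\gamma_m)_{m\ge 1}$ with $\gamma_m>0$), places the non-nulls at equispaced positions $\{m_0,2m_0,\ldots\}$ (later with increasing gaps to remove the $\eps$), and lower bounds via a renewal-type calculation: the number of false discoveries between consecutive non-nulls is an i.i.d.\ sequence, and a one-line renewal equation gives the limiting expectation $b_0/(1-b_0)$ as $m_0\to\infty$, whence $\FDR\to b_0$. Your approach instead engineers a bespoke rule whose spending phase has $\psi_j=0$, forcing the wealth trajectory (on the realized path) to be deterministic; then $V_{<k}$ is literally a sum of independent Bernoullis with precomputable means, and Chebyshev plus the polynomial growth $m_k=k$ finishes. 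Your argument is more elementary (no renewal identity, no monotone-convergence step); the paper's buys the extra information that the bound is already tight for the canonical \LORD rule, not just for a tailored one.

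One point to tighten: your claim that ``the entire sequence $(\alpha_j)_{j\ge 1}$ is a deterministic function of $j$'' and hence the rule is ``trivially monotone'' is not quite right. Determinism holds only on the realized path where every booster rejects; as a \emph{function} of $\bR_1^{j-1}$, the level $\alpha_j$ does vary (through $W(j-1)$, which depends on booster outcomes). What you actually need---and what does hold---is that $W(j-1)$ is monotone nondecreasing in $\bR_1^{j-1}$: a booster step sends $W(j-1)\mapsto W(j-1)+b_0$ on rejection and $\mapsto 0$ on acceptance, a spending step sends $W(j-1)\mapsto \max\{W(j-1)-(1-b_0)\beta,\,0\}$ regardless of $R_j$, and both maps preserve order. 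Since $\alpha_j$ is an increasing function of $W(j-1)$ in both phases, monotonicity of the rule follows. You already flag this as the delicate step; just replace the ``deterministic'' reasoning with this order-preservation check and the argument is complete.
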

\begin{proof}
For the generalized alpha investing rule, we use \LORD with sequence of parameters $(\gamma_m)_{m\ge 1}$.
We assume $\gamma_m>0$ for all $m\ge 1$. 
Fix $m_0\ge 2$. We construct the $p$-values $(p_i)_{i\ge 1}$ by assuming that $H_{j}$ is false if $j\in \{m_0,2m_0,3m_0,\dots\}\equiv S$
and true otherwise. For $i \in S$, we let $p_i=0$ almost surely, and for the null hypotheses we have$(p_j)_{j\not \in S }\sim_{i.i.d.}\Unif([0,1])$.

Since $p_j=0$ we also have $R_j=1$ for all $j\in S$, and hence $W(j) \ge b_0$ for all $j\in S$. 
Consider a modified rule in which, every time a discovery is made, the potential is reset to $b_0$.
Denote by $\tW(j)$and $\tV(j)$ the corresponding potential and number of false discoveries, respectively. 
Since the rule is monotone,
we have $W(j)\ge \tW(j)$  and hence $V(j)\ge \tV(j)$, for all $j$.
Further, for all $n\ge m_0$ we have $R(n)\ge 1$ and therefore
\begin{align*}
\FDR(n) & =\E\Big\{\frac{V(n)}{R(n)}\Big\} = \E\Big\{\frac{V(n)}{\lfloor n/m_0\rfloor +V(n)}\Big\} \ge \E\Big\{\frac{\tV(n)}{\lfloor n/m_0\rfloor +\tV(n)}\Big\} \, ,
\end{align*}
where the last inequality follows since $x\mapsto x/(x+a)$ is monotone increasing for $x,a\ge 0$.
Let $X_{\ell}(m_0)$, $\ell\ge 1$ denote the number of false discoveries (in the modified rule) between $H_{\ell m_0+1}$ and $H_{(\ell+1)m_0-1}$.
Note that the $(X_{\ell}(m_0))_{\ell\ge 1}$ are mutually independent, bounded random variables and 
$\tV(n)\ge \sum_{\ell=1}^{\lfloor n/m_0\rfloor-1} X_{\ell}(m_0)$. Hence, denoting by $X(m_0)$ an independent copy of the $X_\ell(m_0)$,
we get
\begin{align}
\lim\inf_{n\to\infty}\FDR(n) & \ge \lim\inf_{n\to\infty}\E\Big\{\frac{\sum_{\ell=1}^{\lfloor n/m_0\rfloor-1} X_{\ell}(m_0)}
{\lfloor n/m_0\rfloor +\sum_{\ell=1}^{\lfloor n/m_0\rfloor-1} X_{\ell}(m_0)}\Big\} \nonumber\\
& = \frac{\E X(m_0)}{1+\E X(m_0)}\, ,\label{eq:oneplusX}
\end{align}
where the last equality follows from the strong law of large numbers and dominated convergence.

We can define $X(m_0)$ as the number of false discoveries under the modified rule between hypotheses $H_1$ and $H_{m_0-1}$
when all nulls are true, i.e. $(p_j)_{j\ge 1}\sim_{i.i.d.}\Unif([0,1])$, and we initialize by $\tW(0) = b_0$.
By this construction,  the sequence of random variables $(X(m_0))_{m_0\ge 2}$ is monotone increasing
with $\lim_{m_0\to\infty} X(m_0) = X(\infty)$, whence $\lim_{m_0\to\infty} \E X(m_0) = \E X(\infty)$ by monotone convergence. 
We next compute $\E X(\infty)$. Let $T_1$ be the time at which the first discovery is made (in particular,
$\prob(T_1=\ell )=b_0\gamma_{\ell}\prod_{i=1}^{\ell-1}(1-b_0\gamma_i)$). 
Denoting by $X(\ell,\infty)=\sum_{i=\ell+1}^{\infty}R_i$ the number
of discoveries after time $\ell$, we have
\begin{align*}
\E\{X(\infty)\} &= \sum_{\ell=1}^{\infty}\E\{X(\infty)|T_1=\ell\} \, \prob(T_1= \ell )\\
&=\sum_{\ell=1}^{\infty}\E\{X(\infty)|T_1=\ell\} \, \prob(T_1= \ell )\\
&=\sum_{\ell=1}^{\infty}\E\{1+X(\ell,\infty)|T_1=\ell\} \, \prob(T_1= \ell )\\
&=\sum_{\ell=1}^{\infty}\big\{1+\E\{X(\infty)\} \big\}\, \prob(T_1= \ell )\\
&=\big\{1+\E\{X(\infty)\} \big\}\, \prob(T_1<\infty)\, .
\end{align*}
Note that by Equation~(\ref{eq:oneplusX}) we can assume $\E\{X(\infty)\}<\infty$. Since $\prob(T_1<\infty)=b_0$,
the above implies $\E\{X(\infty)\} = b_0/(1-b_0)$.

Substituting in Equation~(\ref{eq:oneplusX}), we deduce that, for any $\eps>0$, there exists $m_{0,*}(\eps)$
such that, for the $p$-values constructed above with $m_0\ge m_{0,*}(\eps)$,
\begin{align}
\lim\inf_{n\to\infty}\FDR(n) & \ge (1-\eps)b_0\, .
\end{align}
Finally, we can take $\eps\to 0$ if we modify the above construction by taking the set
of non-null hypotheses to have, instead of equispaced elements, increasing gaps that diverge to infinity.
For instance we can take the set of non-null to be $(H_{2^{\ell}})_{\ell\ge 2}$ and repeat the above analysis.

Equation (\ref{eq:SecondLB}) follows by the same argument.
\end{proof}

%
%
\section{FDR for dependent $p$-values (Proof of Theorem 3.6)}
\label{sec:ProofDependent}

Let $\event_{v,u}$ be the event that the generalized alpha investing
rule rejects exactly $v$ true null and $u$ false null hypotheses 
in $\cH(n) = (H_1,\dots, H_n)$. We further denote by $n_0$ and $n_1=n-n_0$
the number of true null and false null hypotheses in $\cH(n)$.
The false discovery rate for a fixed choice of the parameters
$\btheta$ is
\begin{align}
\FDR^{\theta}(n) &\equiv \E(\FDP^{\theta}(n)) \nonumber\\
& = \sum_{v=0}^{n_0} \sum_{u=0}^{n_1} \frac{v}{(v+u)\vee 1}\,
  \prob(\event_{v,u})\,.\label{eq:FDRSum}
\end{align}
We next use a lemma from~\cite{benjamini2001control}. We present  its
proof here for the reader's convenience.
\begin{lemma}[\cite{benjamini2001control}]\label{lem:dummy_ind}
Let $\Omega_0\subseteq [n]$ be the subset of
true nulls. The following holds true:
\begin{align}
\prob(\event_{v,u}) = \frac{1}{v} \sum_{i\in\Omega_0} \prob((p_i\le \alpha_i) \cap \event_{v,u})\,.
\end{align}
\end{lemma}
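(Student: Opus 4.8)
The plan is a short double-counting argument. Recall that for a generalized alpha-investing rule the decision at step $i$ is $R_i = \ind\{p_i \le \alpha_i\}$, where $\alpha_i = \alpha_i(\bR_1^{i-1})$ is measurable on $\cF_{i-1}$; in particular $R_i = \ind\{p_i\le\alpha_i\}$ holds pointwise. For $i\in\Omega_0$ (a true null), $R_i=1$ is exactly the event that a false discovery occurs at step $i$. Hence the number of rejected true nulls among $\cH(n)$ equals $\sum_{i\in\Omega_0} R_i = \sum_{i\in\Omega_0}\ind\{p_i\le\alpha_i\}$.

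The first step is to observe that, by the very definition of $\event_{v,u}$ (exactly $v$ true nulls and $u$ false nulls rejected), on this event we have $\sum_{i\in\Omega_0}\ind\{p_i\le\alpha_i\} = v$. Therefore the pointwise identity
\begin{align}
\sum_{i\in\Omega_0}\ind\{p_i\le\alpha_i\}\,\ind_{\event_{v,u}} = v\,\ind_{\event_{v,u}}
\end{align}
holds. The second step is simply to take expectations of both sides and use linearity, which gives $\sum_{i\in\Omega_0}\prob\big((p_i\le\alpha_i)\cap\event_{v,u}\big) = v\,\prob(\event_{v,u})$; dividing by $v$ yields the claim. Since the lemma is only invoked for the terms appearing in the sum~\eqref{eq:FDRSum}, where the coefficient $v/((v+u)\vee 1)$ vanishes when $v=0$, the harmless degeneracy of the statement at $v=0$ causes no difficulty, and we may assume $v\ge 1$ throughout.

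There is no real obstacle here: the only thing to be careful about is that $\alpha_i$ is itself random (a function of past decisions), so that one cannot simply ``condition out'' $\alpha_i$, but this is not needed — the argument is a purely set-theoretic identity of indicator functions followed by taking expectations, and no independence or distributional assumption on the $p$-values is used at this stage.
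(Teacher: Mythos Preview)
Your proof is correct and is essentially the same double-counting argument as the paper's: both establish $\sum_{i\in\Omega_0}\prob\big((p_i\le\alpha_i)\cap\event_{v,u}\big)=v\,\prob(\event_{v,u})$ by using that on $\event_{v,u}$ exactly $v$ of the true-null indicators $\ind\{p_i\le\alpha_i\}$ equal one. Your pointwise indicator identity is in fact a slightly cleaner packaging than the paper's explicit partition of $\event_{v,u}$ into the sub-events $\event^{\Omega}_{v,u}$ indexed by which particular size-$v$ subset $\Omega\subseteq\Omega_0$ is rejected, but the content is the same.
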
 

\begin{proof}
Fix $\btheta$ and $u,v$. In particular $|\Omega_0|=n_0$.
For a subset $\Omega \subseteq\Omega_0$ with $|\Omega| = v$, denote by $\event^\Omega_{v,u}\subseteq\event_{v,u}$
the event that the $v$ true null hypotheses in $\Omega$ are the ones rejected,
and additional $u$ false null hypotheses are rejected. 

Note that, for $i\in \Omega_0$, we have
\begin{align}
\prob\big((p_i \le \alpha_i\big) \cap \event^{\Omega}_{v,u}) = \begin{cases}
\prob(\event^\Omega_{v,u})\, & \text{ if }i\in \Omega\,,\\
0& \text{ otherwise .}
\end{cases} 
\end{align}
Therefore,
\begin{align*}
\sum_{i\in\Omega_0} \prob\big((p_i\le \alpha_i) \cap \event_{v,u}\big)
  &= 
\sum_{i\in\Omega_0} \sum_{\Omega\subseteq\Omega_0} \prob\big((p_i\le \alpha_i) \cap \event^\Omega_{v,u}\big)\\
&= \sum_{\Omega\subseteq\Omega_0} \sum_{i\in\Omega_0}\prob\big((p_i\le \alpha_i) \cap \event^\Omega_{v,u}\big)\\
&= \sum_{\Omega\subseteq\Omega_0} \sum_{i\in\Omega_0}\, \ind(i\in \Omega) \,\prob(\event^\Omega_{v,u}) = \sum_{\Omega\subseteq\Omega_0} v\, \prob(\event^\Omega_{v,u})
= v \,\prob(\event_{v,u})\,,
\end{align*}
which completes the proof.
\end{proof}
Applying Lemma~\ref{lem:dummy_ind} in Equation~(\ref{eq:FDRSum}), we obtain
\begin{align}
\FDR^{\theta}(n) = \sum_{v=0}^{n_0}
  \sum_{u=0}^{n_1}\frac{1}{(v+u)\vee1} \sum_{i\in\Omega_0}\prob((p_i\le
  \alpha_i) \cap \event_{v,u})\,. \label{eq:FDRsum1}
\end{align}
Define the measure $\nu_{i,u,v}$ on $(\reals,\cB_{\reals})$ by
letting, for any Borel set $A\in\cB_{\reals}$
\begin{align}
\nu_{i,u,v}(A) \equiv \prob\Big((p_i\le
  \alpha_i) \cap \event_{v,u}\cap \{\cI_{i-1}\in A\}\Big)
\end{align}
Notice that, by definition, $\nu_{i,v,u}$ is supported on
$[\cImin(i-1),\cImax(i-1)]$. Also $\nu_{i,v,u}$ is a finite measure, but
not a probability measure (it does not integrate to one). Indeed
$\int_{\cImin(i-1)}^{\cImax(i-1)}\de\nu_{i,v,u}(s) =\prob((p_i\le
  \alpha_i) \cap \event_{v,u}\big)$. 
Then Equation~(\ref{eq:FDRsum1}) yields
\begin{align}
\FDR^{\theta}(n) = \sum_{i\in\Omega_0}\int_{\cImin(i-1)}^{\cImax(i-1)}\sum_{v=0}^{n_0}
  \sum_{u=0}^{n_1}\frac{1}{(v+u)\vee 1} \, \de\nu_{i,v,u}(s)\,.
\end{align}
Define $\nu_{i,k}=\sum_{v,u:v+u=k}\nu_{i,v,u}$. Note that, by
definition of $\RL_i(s)$, we have $\nu_{i,k}(\{s: k\le \RL_i(s)\}) =
0$, whence $\nu_{i,k} = \ind(k> \RL_i(s))\nu_{i,k}$. Therefore:
\begin{align}
\FDR^{\theta}(n) &= \sum_{i\in\Omega_0}\int_{\cImin(i-1)}^{\cImax(i-1)}
  \sum_{k=\RL_i(s)+1}^{n}\frac{1}{k} \, \de\nu_{i,k}(s)\nonumber\\
&\le \sum_{i\in\Omega_0}\int_{\cImin(i-1)}^{\cImax(i-1)}
  \frac{1}{\RL_i(s)+1}\sum_{k=\RL_i(s)+1}^{n} \, \de\nu_{i,k}(s)\, .\label{eq:BoundFDR_Depend}
\end{align}
Letting $\nu_i = \sum_{k=1}^n \nu_{i,k}$, we have, for any Borel set
$A\in\cB_{\reals}$,
\begin{align*}
\nu_i(A) &= \prob\big(\{p_i\le \alpha_i\}\cap \{\cI_{i-1}\in A\}\big)\\
& = \prob\big(\{p_i\le g_i(\cI_{i-1})\}\cap \{\cI_{i-1}\in A\}\big) \\
& = \int_{\{\tau\le g_i(s)\} \cap\{s\in A\}} \de\hnu_i(\tau,s)\, .
\end{align*}
where $\hnu_i$ is the joint probability measure of $p_i$ and
$\cI_{i-1}$.
Since $g_i$ is non-decreasing and continuous, we will define its
inverse by $g_i^{-1}(\tau) = \inf\{s:\, g_i(s)\ge \tau\}$.
 Using this in Equation~(\ref{eq:BoundFDR_Depend}), we get the bound

\begin{align*}
&\FDR^{\theta}(n)\le
                   \sum_{i\in\Omega_0}\int\,  \frac{1}{\RL_i(s)+1} \, \ind\big(s\in
                   [\cImin(i-1),\cImax(i-1)]\big)\, \ind\big(\tau\in
                   [0,g_i(s)]\big)\; \de\hnu_i(\tau,s) \nonumber\\
& \stackrel{(a)}{\le} \sum_{i\in\Omega_0}\int\,  \frac{1}{\RL_i(s)+1} \, \ind\big(s\in
                   [\cImin(i-1)\vee g_i^{-1}(\tau),\cImax(i-1)]\big)\, \ind\big(\tau\in
                   [0,g_i(\cImax(i-1))]\big)
   \de\hnu_i(\tau,s) \nonumber\\
& \stackrel{(b)}{\le} \sum_{i\in\Omega_0}\int\,\bigg\{  \frac{1}{\RL_i(\cImin(i-1))+1} \, \ind\big(\tau\in
                   [0,g_i(\cImin(i-1))]\big) \nonumber\\
  &\quad\quad \quad \quad \quad +  \frac{1}{\RL_i(g_i^{-1}(\tau))+1} \, \ind\big(\tau\in
                   [g_i(\cImin(i-1)),g_i(\cImax(i-1))]\big) \bigg\}
                    \; \de\hnu_i(\tau,s) \nonumber\\
& \stackrel{(c)}{\le} 
\sum_{i\in\Omega_0}\bigg\{ \frac{g_i(\cImin(i-1))}{\RL_i(\cImin(i-1))+1} +
 \int\,  \frac{1}{\RL_i(g_i^{-1}(\tau))+1} \, \ind\big(\tau\in
                   [g_i(\cImin(i-1)),g_i(\cImax(i-1))]\big)
  \; \de\tau\bigg\} \,,
\end{align*}
\\
where $(a)$ follows from monotonicity of $s\mapsto g_i(s)$, $(b)$ by
the monotonicity of $s\mapsto \RL_i(s)$, and $(c)$ follows by integrating
over $s$ and noting that $\de\hnu_i(\tau)$ is the
uniform (Lebesgue) measure on the interval $[0,1]$ since $p_i$ is a
$p$-value for a true null hypothesis. Therefore by the change of variables $\tau=g_i(s)$, we obtain
\begin{align}\label{eq:FDRdep}
\FDR^{\theta}(n) &\le \sum_{i\in\Omega_0} \bigg\{ \frac{g_i(\cImin(i-1))}{\RL_i(\cImin(i-1))+1} +
\int_{\cImin(i-1)}^{\cImax(i-1)}\, \frac{\dg_i(s)}{\RL_i(s)+1} \de s\bigg\}\,.
\end{align}
Finally, let $\boldsymbol{0}_1^i$ be the zero sequence of length $i$. By definition $\cI_i(\boldsymbol{0}_1^i) \ge \cImin(i)$ and therefore, by definition of $\RL_i(s)$~\eqref{eq:RL} we have $\RL_i(\cImin(i-1)) =0$. The claim follows from equation~\eqref{eq:FDRdep}. 
%
%
%
\section{Examples of derivation of $\bgo$}\label{gammaEx}
\begin{example}
Suppose that non-null $p$-values are generated as per Beta density with parameters $a,b>0$. Then $F(x) = I_x(a,b)$ where $I_x(a,b) = (\int_{0}^x t^{a-1} (1-t)^{b-1} \de t)/B(a,b)$ is the regularized incomplete Beta function and $B(a,b) = \int_{0}^1 t^{a-1} (1-t)^{b-1} \de t$ denotes the Beta function. It is easy to see that for $a<1$ and $b\ge 1$, $F(x)$ is concave. Moreover, $\lim_{x\to 0}x^a/F(x) = a B(a,b)$. Hence, for $a<1$, we get $G(x)= \pi_1 F(x)+ (1-\pi_1) x\asymp x^a$, up to constant factor that depends on $a,b,\pi_1$. Applying Proposition~\ref{pro:optpower}, we obtain
\begin{align}
\gamma_m \asymp \Big(\frac{1}{m} \log m\Big)^{1/a}.\label{eq:GammaBeta}
\end{align}
When $a$ is small, the beta density $F'(x)$ decreases very rapidly with $x$, and thus the non-null $p$-values 
are likely to be very small. It follows from Equation~(\ref{eq:GammaBeta}) that $\gamma_m$ also decreases rapidly
in this case. This is intuitively justified: when the non-null $p$-values are typically small, small test levels 
are adequate  to reject the true non-nulls and not to waste the $\alpha$-wealth. On the other hand, when 
$a$ grows, the range of significant $p$-values becomes broader and the coefficients $\gamma_m$ decay more slowly.
\end{example}

\begin{example}{\bf (Mixture of Gaussians)} \label{ex:Mixture}
Suppose we are getting samples $Z_j\sim \normal(\theta_j,1)$ and
we want to test null hypotheses $H_j:\; \theta_j = 0$ versus alternative $\theta_j = \mu$. In this case, two-sided
$p$-values are given by $p_j = 2\Phi(-|Z_j|)$ and hence
\begin{eqnarray}
F(x) &=& \prob_1\big(|Z_j|\ge \Phi^{-1}(1-x/2)\big)\nonumber\\
&=& \Phi(-\Phi^{-1}(1-x/2)-\mu) + \Phi(\mu-\Phi^{-1}(1-x/2))\,.\label{eq:Fnorm}
\end{eqnarray}
Recall the following classical bound on the c.d.f of normal distribution for $t\ge 0$:
\begin{align}\label{eq:ineqNorm}
\frac{\phi(t)}{t}\Big(1-\frac{1}{t^2}\Big) \le \Phi(-t) \le \frac{\phi(t)}{t}\,.
\end{align}
Define $\xi(x) = \Phi^{-1}(1-x/2)$, and hence $x = 2\Phi(-\xi(x))$. 
A simple calculation shows that 
\begin{align}\label{eq:xi(x)}
\lim_{x\to 0}\xi(x)/\sqrt{2\log(1/x)} = 1\,.
\end{align} 
Applying inequalities~\eqref{eq:ineqNorm} and Equation~\eqref{eq:xi(x)}, simple calculus shows that,
as $x\to 0$,
\begin{align*}
F(x) \sim \frac{1}{2}\, x\, e^{-\mu^2/2} e^{\mu\sqrt{2\log(1/x)}} \,.
\end{align*}
Hence, for $G(x) = \pi_1 F(x)+(1-\pi_1) x$, we obtain
\begin{align*}
G(x)\sim  \frac{\pi_1}{2}\, x\, e^{-\mu^2/2} e^{\mu\sqrt{2\log(1/x)}} \,.
\end{align*}
Using Proposition~\ref{pro:optpower}, we obtain that for large enough $m$,
\begin{align}
C_1 \frac{\log m}{me^{\mu\sqrt{2\log m}}}\le \bgo_m \le C_2 \frac{\log m}{me^{\mu\sqrt{2\log m}}}\,,
\end{align}
with $C_1$, $C_2$ constants depending only on $\mu,b_0$. (In particular, we can take 
$C_1(\mu,\alpha) = 1.9\, b_0^{-1} e^{\mu^2/2}$, $C_2(\mu,\alpha) = 4.1\, b_0^{-1} e^{\mu^2/2}$.)
\end{example}
%
%

\section{FDX control (Proof of Theorem 6.1)}
\label{sec:ProofFDX}

Define $u = (\gamma-b_0-w_0)/(1-\alpha)$.
We will denote by $N$ the first time such that either $W(n)=0$ or the condition in
assumption $\AssIV$ is violated, i.e. 
\begin{align}\label{def:N}
N \equiv \min\big\{\, n\ge 1 \; \mbox{ s.t. } \; W(n)=0 
\mbox{ or }M(n) + \xi_{n+1} > u \,\big\}\, .
\end{align}
Note that this is a stopping time with respect to the filtration $\{\cF_n\}$.
Further, by assumption $\AssIV$, there is no discovery after time
$N$. Namely $R_j=0$ for all $j>N$.

Define the process
\begin{align}
B(j) \equiv \begin{cases}
b_0 R(j) - W(j)-V(j)+\gamma-b_0 &\mbox{ if $j\le N$,}\\
b_0 R(N) - W(N)-V(N)+\gamma-b_0 &\mbox{ if $j> N$.}
\end{cases}
\end{align}
Note that $B(j)$ is measurable on $\cF_j$.
A key step will be to prove the following.
\begin{lemma}\label{subM2}
The process $\{B(j)\}_{j\ge 0}$ is a submartingale with respect to the filtration $\{\cF_{j}\}$.
\end{lemma}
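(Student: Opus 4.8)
# Proof Proposal for Lemma \ref{subM2}

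The plan is to verify the submartingale property directly, by computing conditional expectations of the increments $B(j)-B(j-1)$; the argument is essentially the one behind Theorem~\ref{thm:FDR-control} specialized to $f\equiv 1$, with the extra bookkeeping of the freezing at the stopping time $N$. First I would record integrability: since $0\le R(j),V(j)\le j$ and $W(j)$ is a function of $\bR_1^{j}\in\{0,1\}^{j}$ (hence takes finitely many values), $B(j)$ is bounded, and it is $\cF_j$-measurable by construction. Next I would reduce to the ``active'' regime. Because $N$ is an $\{\cF_n\}$-stopping time, the event $\{N\le j-1\}$ lies in $\cF_{j-1}$, and on this event both $B(j)$ and $B(j-1)$ equal the frozen value $B(N)$, so the increment vanishes. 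On the complementary event $\{N\ge j\}\in\cF_{j-1}$ the increment equals
\[
B_j \;\equiv\; b_0 R_j - W_j - V_j \;=\; (b_0-\psi_j)R_j + \varphi_j - V_j,
\]
using $W_j = W(j)-W(j-1) = -\varphi_j + R_j\psi_j$. Hence it suffices to show $\E[B_j\mid\cF_{j-1}]\ge 0$, since then $\E[B(j)\mid\cF_{j-1}] = B(j-1) + \ind_{\{N\ge j\}}\,\E[B_j\mid\cF_{j-1}]\ge B(j-1)$.

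To bound $\E[B_j\mid\cF_{j-1}]$ I would split on whether $H_j$ is a true null. If $\theta_j\neq 0$ then $V_j=0$ and $B_j=(b_0-\psi_j)R_j+\varphi_j$; since $R_j\in\{0,1\}$, $\varphi_j\ge 0$, and $\psi_j\le\varphi_j+b_0$ by~\eqref{eq:A1a}, we get $B_j\ge -\varphi_j R_j+\varphi_j\ge 0$ pointwise, so its conditional expectation is nonnegative. If $\theta_j=0$ then $V_j=R_j$, so $B_j=(b_0-\psi_j-1)R_j+\varphi_j$; taking conditional expectation, recalling that $\alpha_j,\varphi_j,\psi_j$ are measurable on $\cF_{j-1}$, using $\E[R_j\mid\cF_{j-1}]=\prob(p_j\le\alpha_j\mid\cF_{j-1})\le\alpha_j$ from the negative-dependence hypothesis~\eqref{eq:NegativeDependence}, and the fact that $b_0-\psi_j-1\le b_0-1\le 0$ (since $\psi_j\ge 0$ and $b_0\le 1$), I obtain
\[
\E[B_j\mid\cF_{j-1}]\;\ge\;(b_0-\psi_j-1)\alpha_j+\varphi_j\;\ge\;0,
\]
where the last inequality is exactly condition~\eqref{eq:A1b} rewritten as $(\psi_j-b_0+1)\alpha_j\le\varphi_j$; the degenerate case $\alpha_j=0$ is trivial, as then $R_j=V_j=0$ almost surely and $B_j=\varphi_j\ge 0$.

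The computation itself is routine — it only recombines \AssI with the per-step domination of $\prob(p_j\le\alpha_j\mid\cF_{j-1})$ by $\alpha_j$. The one point that needs care is the interaction with the stopping time: I must check that $\{N\ge j\}\in\cF_{j-1}$ and that the process is genuinely frozen past $N$ (so that the increment is identically zero there). Both follow from $N$ being an $\{\cF_n\}$-stopping time and from assumption~\AssIV, which forces $R_j=0$ for all $j>N$; this is the only place the new conditions \AssIII, \AssIV enter the present lemma, and the bound on $M(n)+\xi_{n+1}$ will instead be used later, in Appendix~\ref{sec:ProofFDX}, to turn the submartingale estimate into the $\FDX_\gamma$ bound.
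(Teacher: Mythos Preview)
Your proof is correct and follows essentially the same approach as the paper's: verify adaptedness and integrability, reduce to the unstopped increments $B_j=(b_0-\psi_j)R_j+\varphi_j-V_j$, and then split on $\theta_j=0$ versus $\theta_j\neq 0$, using~\eqref{eq:NegativeDependence} together with \AssI. The only cosmetic differences are that the paper invokes the general ``stopped submartingale'' principle rather than your explicit split on $\{N\ge j\}\in\cF_{j-1}$, and that in the non-null case the paper bounds the conditional expectation whereas you (equivalently, and slightly more directly) observe $B_j\ge 0$ pointwise.
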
 
\begin{proof}
As already pointed out,  $B(j)$ is measurable on $\cF_j$. Further,
since $\cF_j$ is generated by $j$ binary variables, $B(j)$ takes at
most $2^j$ finite values, and is therefore integrable.
Let $B'(j) \equiv b_0 R(j) - W(j)-V(j)+\gamma -b_0$. Since $B$ 
is a stopped version of $B'$, it is sufficient to check that $B'$
is a submartingale. 
Let $B'_j = B'(j) - B'(j-1)$, $W_j = W(j)-W(j-1)$, $V_j =
V(j)-V(j-1)$. By definition, have 
$B'_j = b_0 R_j  - W_j-V_j=(b_0-\psi_j)R_j-V_j+\varphi_j$.

We first assume that the null hypothesis $H_j$ holds, i.e.
$\theta_j=0$. Hence, 
$R_j = V_j$ and $B'_j = (b_0 -\psi_j -1) R_j +\varphi_j$.
Taking conditional expectation, we get
\begin{align*}
\E(B'_j|\cF_{j-1}) &\stackrel{(a)}{=} (b_0-\psi_j-1) \E(R_j|\cF_{j-1}) + \varphi_j\\
& \stackrel{(b)}{\ge} (b_0-\psi_j-1) \alpha_j +\varphi_j\\
& \stackrel{(c)}{\ge} (b_0 - \varphi_j/\alpha_j -b_0+1-1)\alpha_j + \varphi_j = 0\,,
\end{align*}
where $(a)$ follows because $\psi_j$ and $\varphi_j$ are measurable on
$\cF_{j-1}$; $(b)$ by assumption (\ref{eq:NegativeDependence}), since
$\psi_j\ge 0$ and $b_0\le 1$, whence $(b_0-\psi_j-1) \le 0$, and $(c)$
from assumption \AssI, cf. Equation~(\ref{eq:A1b}).

Next, we assume a false null hypothesis, i.e. $\theta_j\neq 0$, and thus $V_j =0$ and $B'_j = (b_0 - \psi_j) R_j +\varphi_j$. 
Taking again conditional expectation, we obtain
\begin{align*}
\E(B'_j|\cF_{j-1}) &= (b_0-\psi_j) \E(R_j|\cF_{j-1}) + \varphi_j\\
&\ge (b_0 - \varphi_j - b_0) \E(R_j|\cF_{j-1}) + \varphi_j \ge 0\,,
\end{align*}
from assumption \AssI, cf. Equation~(\ref{eq:A1a}).
\end{proof}

Applying Doob decomposition theorem, process $B(n)$ has a (unique) decomposition into a martingale $\tM(n)$ and a nonnegative
predictable process $A(n)$ that is almost surely increasing. Specifically, 
\begin{eqnarray*}
\tM(n) &=& B(0) + \sum_{j=1}^n \Big(B_j - \E(B_j|\cF_{j-1})\Big)\,,\\
A(n) &=& \sum_{j=1}^n \E(B_j|\cF_{j-1})\,.
\end{eqnarray*}

We next define the process $Q(n)$ as follows.
\begin{align*}
Q(n) = \begin{cases}
\tM(n) &\text{ if } \underset{0 \le i\le n}{\min} \tM(i) > 0\,,\\
0 &\text{ otherwise }\,.
\end{cases}
\end{align*}
Equivalently, define the stopping time
\begin{align*}
N_{*}\equiv \min\big\{ \, n\ge 1\; :\;  \tM(n)\le 0\, \big\}\, .
\end{align*}
(Note that $\tM(0) = B(0) = \gamma - b_0-w_0>0$ by assumption \AssIII.)
Then $Q(n)$ is the positive part of $\tM(n)$ stopped at $N_*$:
\begin{align*}
Q(n) \equiv \max\big(\,0 ,\tM(n\wedge N_*)\,\big)\, .
\end{align*}
Since it is obtained by applying a convex function to a stopped
submartingale, $Q(n)$ is also a submartingale. 

Further,
\begin{align}\label{eq:C-Bound}
0\le Q(n) \le \max(\tM(n),0) \le \max_{n\le N}\tM(n)\,,
\end{align}
where we used the fact that $\tM(n)$ rains unchanged after $N$ and $\tM(0) >0$.
Observe that, defining $\tM_j = \tM(j)-\tM(j-1)$, for $j\le N$,
\begin{align}
\tM_j = \begin{cases}
(b_0-\psi_j) (R_j - \E(R_j|\cF_{j-1}))\, \;\;& \theta_j \neq 0\,,\\
(b_0-\psi_j-1) (R_j -\alpha_j)\,& \theta_j = 0\,.
\end{cases}
\end{align}
Recalling definition of $M_j$, cf. Equation~\eqref{eq:Mj}, we have
$\tM_j\le M_j$ and $\tM(0) = M(0) = \gamma- b_0-w_0$.
Hence $\tM(n)\le M(n)$ for all $n\in \naturals$.
Furthermore, 
\begin{align*} 
\max_{n\le N} M(n)  = \max_{n\le N} \big(M(n-1)+M_n\big) \le \max_{n\le N} \big(u-\xi_n +M_n\big) \le u,
\end{align*}
where we used the fact that $M(n-1)+\xi_n\le u$ by definition of
stopping time $N$, cf. Equation~\eqref{def:N}. Using~\eqref{eq:C-Bound}, we obtain 
\begin{align}\label{eq:C-Bound2}
0\le Q(n)\le \max_{n\le N} M(n) \le u\,. 
\end{align}

We next upper bound $\prob(N_*<\infty)$ which directly yields an upper bound on $\FDX_\gamma$. Define the event $\cE_n\equiv \{Q(n) =0\}$ and set $q_n \equiv \prob(\cE_n)$ for $n\in \naturals$. Using the sub-martingale property 
of $Q(n)$ and equation~\eqref{eq:C-Bound2}, we obtain
\begin{align*}
0 < \gamma-b_0-w_0 = \E(Q(0)) \le \E(Q(n)) \le (1-q_n) u\,,
\end{align*}
whence we obtain $q_n \le \alpha$, by plugging in for $u$.
Note that $\event_n\subseteq \event_{n+1}$ for all $n\in \naturals$. Clearly $\{N_*<\infty\} = \cup_{n=0}^\infty \event_n $ and by monotone convergence properties of probability measures
\begin{align}
\prob(N_*<\infty) = \lim_{n\to \infty} \prob(\event_n) = \lim_{n\to \infty} q_n \le \alpha\,.
\end{align}
We lastly write $\FDX_\gamma$ in terms of event $\{N_*<\infty\}$ as follows.
\begin{eqnarray}
\begin{split}
\Big\{\sup_{n\ge 1} \FDP^\theta(n) \ge\gamma \Big\} &\stackrel{(a)}\equiv \Big\{\exists\, 1\le n
\le N:\, V^{\theta}(n) \ge \gamma(R(n)\vee 1) \Big\}\\
& \subseteq \Big\{\exists\, 1\le j \le N:\,  b_0 R(n) -V^\theta(n) + \gamma -b_0 \le 0 \Big\}\\
&= \Big\{\exists\, 1\le n \le N:\, B(n) \le -W(n) \Big\}\\
&\stackrel{(b)}{\subseteq} \Big\{\exists\, 1\le n \le N:\, B(n) \le 0 \Big\}\\
&\stackrel{(c)}{\subseteq}\Big\{\exists\, 1\le n \le N:\, \tM(n) \le 0 \Big\}\\
&\subseteq \big\{N_* <\infty\}\,.
\end{split}
\end{eqnarray}
Here $(a)$ holds because  there is no discovery after time $N$ and $\FDP^\theta(n)$ remains unaltered; $(b)$ holds since $W(n)\ge  0$ and $(c)$ follows from the decomposition $B(n) = \tM(n) + A(n)$ and $A(n) \ge 0$. Therefore,
\begin{align*}
\FDX_\gamma \le \prob (N_* <\infty) \le \alpha\,.
\end{align*}
This concludes the proof.

%
\section{Proof of Theorem 4.1}
\label{app:dependence}
We prove the theorem for $\LORD 1$, namely:
\begin{eqnarray}~\label{eq:MLORD}
\begin{split}
W(0) &= b_0\,,\\
\varphi_i &= \alpha_i = b_0 \gamma_{i-\tau_i} \,,\\
\psi_i &= b_0\,.
\end{split}
\end{eqnarray}
In words, we replace $W(\tau_i)$ in $\varphi_i$ with $b_0$. Given that in the other versions of $\LORD$, $W(\tau_i) \ge b_0$
at each step, the test level for rule~\eqref{eq:MLORD} is smaller than or equal to the test level of \LORD. Therefore, discoveries made by~\eqref{eq:MLORD} are a subset of discoveries made by \LORD and the statistical power of~\eqref{eq:MLORD} lower bounds the power of \LORD.

For testing rule~\eqref{eq:MLORD}, it is clear that the  times between successive discoveries are i.i.d. under the mixture model. Therefore, the process $R(n) = \sum_{\ell=1}^n R_\ell$ is a renewal process. Denote by $t_i$ the time of $i$-th discovery. We let $\Delta_i = t_i-t_{i-1}$ be the  
$i$-{th} interval between discoveries and define $r_i\equiv \ind(t_i \in \Omega_0^c)$ as the reward associated with 
inter-discovery $\Delta_i$. In other words, at each discovery we get reward one only if that discovery corresponds to a non-null hypothesis. Recall that under the mixture model, each hypothesis is truly null/non-null independently of others.    
Therefore, $(r_i, \Delta_i)$ are i.i.d across index $i$ and form a renewal-reward process.  Clearly $\E(r_i) = \pi_1$ and  
we can compute $\E(\Delta_i)$ as follows:
\begin{eqnarray*}
\prob(\Delta_i \ge m) = \prob\Big(\cap_{\ell=1}^m \{p_\ell>\alpha_\ell\}\Big) = \prod_{\ell=1}^m \Big(1-G(\alpha_\ell)\Big) \,.
\end{eqnarray*}
Substituting for $\alpha_{\ell} =b_0 \gamma_{\ell}$,
\begin{eqnarray*}
\E(\Delta_i) = \sum_{m=1}^\infty \prob(\Delta_i \ge m) \le \sum_{m=1}^\infty 
 \prod_{\ell=1}^m \Big(1-G(b_0\gamma_{\ell})\Big)\,.
\end{eqnarray*}
Without loss of generality we can assume $\E(\Delta_i) <\infty$; otherwise bound~\eqref{eq:powerLORD} becomes trivial.
Applying the strong law of large numbers for renewal-reward processes, the following holds true almost surely
\begin{eqnarray*}
\lim_{n\to \infty} \frac{1}{n} \sum_{i=1}^{R(n)} r_i = \frac{\E(r_i)}{\E(\Delta_1)} = 
{\pi_1} \Big(\sum_{m=1}^\infty \prod_{\ell=1}^m \big(1-G(b_0\gamma_{\ell})\big)\Big)^{-1}\,.
\end{eqnarray*}
Further, $\lim_{n\to \infty} |\Omega_0^c(n)|/n = \pi_1$, almost surely. Therefore, almost surely
\begin{eqnarray*}
\lim_{n\to \infty} \frac{1}{|\Omega_0^c(n)|} \sum_{i\in \Omega_0^c(n)} R_i 
\ge \lim_{n\to \infty} \frac{1}{|\Omega_0^c(n)|} \sum_{i=1}^{R(n)} r_i =  \Big(\sum_{m=1}^\infty 
 \prod_{\ell=1}^m \big(1-G(b_0 \gamma_{\ell})\big)\Big)^{-1}\,,
\end{eqnarray*}
where the first inequality follows from the fact that $\sum_{i=1}^{R(n)}r_i/|\Omega_0^c(n)|$ is the average power of rule~\eqref{eq:MLORD}, which as discussed, serves as a lower bound for the average power of \LORD.

\section{Proof of Proposition~4.2}
\label{app:optpower}

We set $\beta_m = b_0 \gamma_m$ for $m\ge 1$.
The Lagrangian for the optimization problem reads as
\begin{eqnarray*}
\mathcal{L} = \sum_{m=1}^\infty e^{-mG(\beta_m)}  + \eta (\sum_{m=1}^\infty  \beta_m - b_0)\,,
\end{eqnarray*}
where $\eta$ is a Lagrange multiplier. 
Setting the derivative with respect to 
$\beta_m$ to zero, we get
\begin{align}\label{eq:KKT}
\eta = mG'(\bopt_m) e^{-mG(\bopt_m)}\,.
\end{align}
Note that $G'(x) = \pi_1 F'(x)+(1-\pi_1) \ge 1-\pi_1$, since $F(x)$ is nondecreasing. 
Hence 
$$m(1-\pi_1) e^{-mG(\bopt_m)} \le \eta\,,$$
whereby using the non-decreasing monotone property of $G$, we obtain
\begin{eqnarray}\label{eq:betaLB}
\bopt_m \ge G^{-1}\Big(\frac{1}{m} \log\Big(\frac{m(1-\pi_1)}{\eta}\Big) \Big)\,.
\end{eqnarray} 
To obtain the upper bound, note that be concavity of $F(x)$ on $(0,x_0)$, we have $G'(x) \le G(x)/x$ for $x\in (0,x_0)$. Since $\beta_m \to 0$ as $m\to \infty$, for large enough $m$, we have
\begin{eqnarray}\label{eq:betaUB1}
m G(\bopt_m) e^{-mG(\bopt)} \ge \eta \bopt_m\,. 
\end{eqnarray}
Further, by equation~\eqref{eq:betaLB} we have $mG(\bopt_m) \ge 1$. Let $\xi_0 \ge 1$ be the solution of $\xi e^{-\xi} = \eta \bopt_m$. Simple algebraic manipulation shows that $\xi_0 \le -2\log(\eta\bopt_m)$.  Therefore, by Equation~\eqref{eq:betaUB1} we have
\begin{eqnarray*}
m G(\bopt_m) \le \xi_0 \le 2\log\Big(\frac{1}{\eta\bopt_m} \Big)\le
2\log\Big(\frac{1}{\eta G^{-1}(1/m)} \Big)\,,
\end{eqnarray*}
where the last step follows from Equation~\eqref{eq:betaLB}. 
Using the non-decreasing property of $G(x)$ we get
\begin{eqnarray}\label{eq:betaUP}
\bopt_m \le G^{-1}\Big(\frac{2}{m}\log\Big(\frac{1}{\eta G^{-1}(1/m)} \Big)\Big)\,,
\end{eqnarray}
The Lagrange multiplier $\eta$ is chosen such that $\sum_{m=1}^\infty \bopt_m = b_0$, 
equivalently, $\sum_{m=1}^\infty \gamma^{{\rm opt}}_m = 1$. 

\section{Diabetes prediction data}
\label{sec:AdClick}

In order to explore a more realistic setting, we apply online testing  to a health screening example.
The adoption of electronic health records has been accelerating in recent years both because of
regulatory and technological forces. A broadly anticipated use of  these  data is to compute predictive health scores 
\cite{bates2014big}. A high-risk score for some chronic disease can trigger an intervention,
such as an incentive for healthy behavior, additional tests, medical follow-up and so on. 
Predictive scores for long term health status are already computed within wellness programs that are supported by
many large employers in the US. 

To be concrete, we will consider a test to identify patients that are at risk of  developing diabetes
(see also \cite{bayati2015low} for related work).
Notice that a positive test outcome triggers an intervention (e.g. medical follow-up) that cannot be revised, 
and it is important to control the  fraction of alerts that are false discoveries. It is therefore natural to view this as an online
hypothesis testing problem as the ones considered in the previous sections.
For each patient $j$, we form the null hypothesis $H_{j}$: ``The patient will not develop diabetes" versus its alternative. 

We use a diabetes dataset released by Practice Fusion as part of a Kaggle competition\footnote{See 
{\sf http://www.kaggle.com/c/pf2012-diabetes}.}.  
We only use the `train' portion of this dataset, which contains de-identified medical records of $n_{\rm tot} = 9,948$ patients.
For each of patient, we have a response variable that indicates if the patient is diagnosed with Type 2 diabetes mellitus, 
along with information on
medications, lab results, immunizations, allergies and vital signs. 

We develop a predictive score based on the available records, 
and will generalized alpha investing rules to control FDR in these tests.

We split the data in three sets ${\sf Train1}$, comprising $60\%$ of the data,  ${\sf Train2}$, $20\%$ of the data, 
and $ {\sf Test}$, $20\%$ of the data. The {\sf Train} sets are used to construct a model, 
which allows to compute $p$-values. 
The $p$-values are then used in an online testing procedure applied to the {\sf Test} set. 
In detail, we proceed as follows:

\bigskip

\noindent{\bf Feature extraction.} For each patient $i$, we denote by $y_i\in\{0,1\}$ 
the response variable (with $y_i=1$ corresponding to diagnosis of diabetes) and we construct a vector of covariates
$\bx_i\in\reals^{d}$, $d=805$ by using the following attributes:
\begin{itemize}
\item \emph{Transcript records}: Year of birth, gender, and BMI
\item \emph{Diagnosis information}: We include $80$ binary features corresponding to different ICD-9 codes.
\item \emph{Medications}: We include $80$ binary features indicating the use of various medications.
\item \emph{Lab results}: For $70$ lab test observations we include a binary feature indicating whether the patient has taken the test. We further includes
abnormality flags and  the observed outcomes as features. In addition, we bin the outcomes into 10 quantiles and make 10 binary features via one-hot encoding. 
\end{itemize}

\vspace{0.1cm}

\noindent{\bf Construction of logistic model.} We use a logistic link function to model the probability of developing 
diabetes. Let  us emphasize that we do not optimize the link function. The primary goal of this section is 
to show applicability of \emph{online} multiple testing setup in many real world problems. 

Explicitly, we model the probability of no diabetes as
\begin{align}
\prob(Y_i=0|\bX_i=\bx_i) = \frac{1}{1+e^{\<\btheta,\bx_i\>}}\, .
\end{align}
The parameter vector $\btheta\in \reals^d$ is estimated using the {\sf Train1} data set. 

\bigskip

\noindent{\bf Construction of $p$-values.} Let $T_0$ be the subset of   {\sf Train2} with $Y_i=0$, and let $n_0 \equiv |T_0|$. 
For $i\in T_0$, we compute the predictive score  $q_i = 1/(1+e^{\<\btheta,\bx_i\>})$. For each $j\in {\sf Test}$,
we compute $q^{{\sf Test}}_j = 1/(1+e^{\<\btheta,\bx_j\>})$, and construct a $p$-value $p_j$ by 
\begin{align}
p_j \equiv \frac{1}{n_0}\big|\big\{\; i\in T_0:\; q_i\le q^{{\sf Test}}_j\big\}\big|\,.\label{eq:Ranking}
\end{align}
For patients with high-risk of a diabetes (according to the model), $q_j$ is small resulting in  
small $p$-values, as expected. 
We will use these $p$-values as input to our online testing procedure.

Note that, since the {\sf Train1} and {\sf Test} sets are exchangeable, the null $p$-values
will be uniform in expectation (and asymptotically uniform under mild conditions).

\bigskip

\noindent{\bf Online hypothesis testing.}   
We consider several online hypothesis testing procedures aimed at controlling FDR
below a nominal value $\alpha=0.1$ (in particular, 
without adjusting for dependency  among the $p$-values).
For $\LORD$, we choose the sequence $\bgamma = (\gamma_m)_{m\ge 1}$ 
following Equation~(\ref{eq:gamma-sim}).
For each rule, we compute corresponding false discovery proportion (FDP) and statistical power and average them over $20$ random splittings of data into ${\sf Train1}$, ${\sf Train2}$, ${\sf Test}$. 
Table~\ref{tbl:yahoo} summarizes the results. As we see,  generalized alpha investing rules
control FDR close to the desired level, and have statistical power comparable to the benchmark offline approaches.

\begin{center}
\vspace{0.5cm}

\begin{tabular}{>{\centering\arraybackslash}p{2.2in}ccc}
\hline
\multicolumn{3}{c}{Result for Diabetes data} \\
\hline
\multirow{2}{*}{}  & \multirow{2}{*}{$\FDR$}  & \multirow{2}{*}{Power}  \\[0.5cm]
\hline
 $\LORD$& 0.126 & 0.531   \\
 Alpha investing & 0.141 & 0.403 \\
Bonferroni &0.095 & 0.166\\
Benjamin-Hochberg (BH)  & 0.121 & 0.548   \\
Adaptive BH (Storey-$\alpha$) &0.123 & 0.569\\
Adaptive BH (Storey-$0.5$)&0.128& 0.573\\
\hline
\end{tabular}
\captionof{table}{False discovery rate (FDR) and statistical power for different online hypotheses testing rules for the diabetes dataset. The reported numbers are averages over $20$ random splits of data into training set and test set.}
\label{tbl:yahoo}
\end{center}



\bibliographystyle{amsalpha} 
\bibliography{all-bibliography}

\end{document}